\theoremstyle{plain}
\newtheorem{theorem}{Theorem}[section]
\newtheorem{corollary}[theorem]{Corollary}
\newtheorem{lemma}[theorem]{Lemma}
\newtheorem{sublemma}[theorem]{Sublemma}
\newtheorem{proposition}[theorem]{Proposition}
\newtheorem{property}[theorem]{Property}
\theoremstyle{definition}
\newtheorem{definition}[theorem]{Definition}
\newtheorem{remark}[theorem]{Remark}
\newtheorem{notation}[theorem]{Notation}
\numberwithin{equation}{section}
\begin{document}
\title[Theta functions and adiabatic curvature]{Theta functions and adiabatic curvature\\
on an Abelian variety }
\author{Ching-Hao Chang}
\address{Department of Mathematics and Applied Mathematics, Xiamen
University Malaysia, Selangor\\
\hspace*{8pt} Malaysia}
\email{chinghao.chang@xmu.edu.my}
\author{Jih-Hsin Cheng}
\address{Institute of Mathematics, Academia Sinica and National Center for
Theoretical Sciences, \hspace*{8pt} Taipei, Taiwan}
\email{cheng@math.sinica.edu.tw}
\author{I-Hsun Tsai}
\address{Department of Mathematics, National Taiwan University, Taipei,
Taiwan}
\email{ihtsai@math.ntu.edu.tw}
\subjclass{Primary 32J25; Secondary 14K25, 32L10, 14K30, 14F25.}
\keywords{theta function, Abelian variety, Picard variety, connection,
curvture, Fourier-Mukai\\
\hspace*{118pt} transform, Hermitian-Einstein metric.}

\begin{abstract}
For an ample line bundle $L$ on an Abelian variety $M$, we study the theta
functions associated with the family of line bundles $L\otimes T$ on $M$
indexed by $T\in \text{Pic}^{0}(M)$. Combined with an appropriate
differential geometric setting, this leads to an explicit curvature
computation of the direct image bundle $E$ on $\text{Pic}^{0}(M)$, whose
fiber $E_{T}$ is the vector space spanned by the theta functions for the
line bundle $L\otimes T$ on $M$. Some algebro-geometric properties of $E$
are also remarked.
\end{abstract}

\maketitle



\section{\textbf{Introduction}}

\noindent \hspace*{12pt} For an Abelian variety $M$ we write $\widehat{M}$ for the
Picard variety $\text{Pic}^{0}(M)$ with natural projections $\pi
_{1}:M\times \widehat{M}\rightarrow M$ and $\pi _{2}:M\times \widehat{M}%
\rightarrow \widehat{M}$, and $P$ for the Poincar\'{e} line bundle on $%
M\times \widehat{M}$. Let $L$ be an ample line bundle on $M$. By considering 
$\pi _{1}^{\ast }L\otimes P$ on $M\times \widehat{M}$ one defines a vector
bundle $E$ to be $E:={\pi _{2}}_{\ast }(\pi _{1}^{\ast }L\otimes P)$ on $%
\widehat{M}$ (regarded as a Fourier-Makai transform of $L$, cf. \cite{P2}).
We also study the pull-back $E^{\prime }:=\varphi _{L}^{\ast }E$ where $%
\varphi _{L}:M\rightarrow \widehat{M}$ is the standard isogeny induced by
the line bundle $L$, namely, $\varphi _{L}(\mu )$ is the line bundle $\tau
_{\mu }^{\ast }L\otimes L^{\ast }\in \text{Pic}^{0}(M)$ via the translation $%
\tau _{\mu }$ induced by $\mu \in M$. Our first result concerns an
algebro-geometric property of $E^{\prime }$: \newline

\noindent

\begin{theorem}
(See (\ref{E'KL0}) )(cf. \cite[Theorem 1.2]{CCT}). Let $E^{\prime }$ and $L$
be as above. Then $E^{\prime }\otimes L$ is a holomorphically trivial vector
bundle on $M$.
\end{theorem}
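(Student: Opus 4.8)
The plan is to compute $E'=\varphi_L^{\ast}E$ by cohomology-and-base-change, to rewrite the pulled-back Poincar\'e bundle as a Mumford line bundle, and finally to trivialize the resulting direct image by a shear automorphism of $M\times M$.

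Since $L$ is ample, for every $T\in\widehat{M}$ the restriction of $\pi_1^{\ast}L\otimes P$ to $M\times\{T\}$ is $L\otimes P_T$ with $P_T\in\mathrm{Pic}^{0}(M)$, which has vanishing higher cohomology and $h^{0}(L\otimes P_T)=\chi(L)$ independent of $T$. Hence $E$ is locally free and cohomology-and-base-change holds at every point. I would then form the Cartesian square
\[
\begin{tikzcd}
M\times M \arrow[r,"\psi"] \arrow[d,"p_2"'] & M\times\widehat{M} \arrow[d,"\pi_2"] \\
M \arrow[r,"\varphi_L"'] & \widehat{M}
\end{tikzcd}
\]
with $\psi:=\mathrm{id}_M\times\varphi_L$ and $p_1,p_2:M\times M\to M$ the two projections, and conclude $E'=\varphi_L^{\ast}(\pi_2)_{\ast}(\pi_1^{\ast}L\otimes P)\cong(p_2)_{\ast}\psi^{\ast}(\pi_1^{\ast}L\otimes P)$.

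Next I would identify $\psi^{\ast}(\pi_1^{\ast}L\otimes P)$. Writing $m:M\times M\to M$ for the group law, the relation $\pi_1\circ\psi=p_1$ gives $\psi^{\ast}\pi_1^{\ast}L=p_1^{\ast}L$, while the very definition of $\varphi_L$, together with the see-saw principle and the normalization of $P$, identifies $\psi^{\ast}P$ with the Mumford bundle $\Lambda(L):=m^{\ast}L\otimes p_1^{\ast}L^{-1}\otimes p_2^{\ast}L^{-1}$ (one checks $\Lambda(L)|_{M\times\{\mu\}}\cong\tau_{\mu}^{\ast}L\otimes L^{-1}=\varphi_L(\mu)$ and $\Lambda(L)|_{\{0\}\times M}\cong\mathcal{O}_M$, matching the corresponding restrictions of $\psi^{\ast}P$). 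Consequently
\[
\psi^{\ast}(\pi_1^{\ast}L\otimes P)\cong p_1^{\ast}L\otimes\Lambda(L)\cong m^{\ast}L\otimes p_2^{\ast}L^{-1},
\]
so the projection formula yields $E'\cong(p_2)_{\ast}(m^{\ast}L)\otimes L^{-1}$, that is $E'\otimes L\cong(p_2)_{\ast}(m^{\ast}L)$.

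It remains to show that $(p_2)_{\ast}(m^{\ast}L)$ is trivial. The shear $\Phi(x,\mu)=(x+\mu,\mu)$ is an automorphism of $M\times M$ with $p_1\circ\Phi=m$ and $p_2\circ\Phi=p_2$, so $m^{\ast}L=\Phi^{\ast}p_1^{\ast}L$; since $\Phi$ is an isomorphism over $M$ via $p_2$, this gives $(p_2)_{\ast}(m^{\ast}L)\cong(p_2)_{\ast}(p_1^{\ast}L)\cong H^{0}(M,L)\otimes_{\mathbb{C}}\mathcal{O}_M$, a holomorphically trivial bundle. Explicitly the trivialization is $s\mapsto m^{\ast}s$, whose value over $\mu$ is the translate $\tau_{\mu}^{\ast}s$; thus a basis of theta functions for $L$ produces a global holomorphic frame of $E'\otimes L$, in keeping with the theta-function viewpoint of the paper. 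I expect the main obstacle to be the identification $\psi^{\ast}P\cong\Lambda(L)$: one must match the normalizations of the Poincar\'e bundle and of $\Lambda(L)$ carefully, as this is precisely the point that forces $E'$ itself to carry the factor $L^{-1}$ (so that $E'$ is \emph{not} trivial while $E'\otimes L$ is), and one must confirm that ampleness of $L$ genuinely licenses base change over all of $\widehat{M}$. Everything after that identification is formal.
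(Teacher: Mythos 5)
Your argument is correct, but it follows a genuinely different route from the paper's. The paper makes no use of base change, the see-saw principle, or the bundle $m^{\ast }L\otimes p_{1}^{\ast }L^{-1}\otimes p_{2}^{\ast }L^{-1}$: instead it introduces $\widetilde{K}=\pi _{1}^{\ast }L_{0}\otimes (\mathrm{Id}\times \varphi _{L_{0}})^{\ast }P\otimes \pi _{2}^{\ast }L_{0}$ on $M\times M$ (which is exactly your $m^{\ast }L$, as your own computation shows), writes down its multipliers explicitly (Proposition \ref{Prop4.2}), verifies by hand that the translated theta functions $\theta _{m}(v;\mu )=\theta _{m}(v+\mu )$, $m\in \mathfrak{M}$, are global holomorphic sections of $\widetilde{K}$ (Proposition \ref{Prop4.3}), and shows by the Gaussian-integral computation of Lemma \ref{Lemma5.2} that they are orthogonal with $L^{2}$-norms independent of $\mu $; this exhibits $K=\pi _{2\ast }\widetilde{K}=E^{\prime }\otimes L_{0}$ as a direct sum of holomorphically trivial line bundles, each framed by one $\theta _{m}$ (Theorem \ref{theo4.3} and (\ref{E'KL0})). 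Your trivialization $s\mapsto m^{\ast }s$, with value $\tau _{\mu }^{\ast }s$ over $\mu $, is literally the same frame --- applied to $s=\theta _{m}$ it gives $\theta _{m}(v+\mu )$ --- but you establish its global holomorphy conceptually (the Cartesian square plus cohomology-and-base-change, the see-saw identification of $\psi ^{\ast }P$, the projection formula, and the shear $\Phi $) rather than by multiplier computations; the steps you flag as delicate do go through, since $L\otimes P_{T}$ is ample with vanishing higher cohomology and constant $h^{0}=\chi (L)$, and the normalization $P|_{\{0\}\times \widehat{M}}\cong \mathcal{O}$ of Lemma \ref{Poincare} is exactly what pins down the see-saw comparison. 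Your route is shorter, metric-free, valid for an arbitrary ample $L$ without choosing normalized multipliers, and is essentially the characteristic-free algebraic argument of Kempf that the Introduction mentions as an alternative; what the paper's explicit computation buys in exchange is the quantitative statement that the $\theta _{m}$ have constant norm, hence that the $L^{2}$-metric on $E^{\prime }\otimes L_{0}$ is flat (part (1) of Theorem \ref{theo4.3}) --- the input needed for the curvature formula of Theorem \ref{thm1.2}, and invisible from your argument.
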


\noindent \hspace*{12pt} Our second result concerns an application of
Theorem 1.1 to the study of full curvature computation on the aforementioned 
$E$. For this purpose let us introduce a different geometric framework, cf. 
\cite{DK},\cite{CCT}. The idea is roughly described as follows. Suppose a
line bundle $G$ is given on $M$ with the first Chern class $c_{1}(G)$
represented by a translationally invariant 2-form $\omega _{G}$. It is well
known that there exists a Hermitian metric $h_{G}$ for $G$, unique up to a
constant scaling, such that $c_{1}(G;h_{G})=\omega _{G}$ as differential
forms. The similar reasoning and notation apply to $G\otimes T$ with $T\in %
\mbox{Pic}^{0}(M)$. In the case where $G$ is the above $L$, it is expected
that by suitably normalizing the metrics $h_{G\otimes T}$ with $T$ running
over $\text{Pic}^{0}(M)$, one can obtain a globally well-defined metric on
the family of line bundles $\{G\otimes T\}_{T}$ on $M$. To be more precise,
a Hermitian metric $\tilde{h}$ is defined on the bundle $\pi _{1}^{\ast
}G\otimes P$ such that it restricts to the normalized $h_{G\otimes T}$. The
point here is that $\tilde{h}$ can be explicitly described and computed.
Moreover $\tilde{h}$ naturally induces an $L^{2}$-metric $h$ on $%
H^{0}(M,G\otimes T)$ which is identified with the fiber of $E$ at $T\in 
\text{Pic}^{0}(M)$. We can now state: \newline

\noindent

\begin{theorem}
\label{thm1.2} (= Corollary \ref{ThetaE} + Theorem \ref{5.10}) (cf. \cite[%
Theorem 1.1]{CCT}) The full curvature $\Theta (E,h)$ associated with the
above metric $h$ on the bundle $E\rightarrow \widehat{M}$ is given by%
\begin{equation}
\Theta (E,h)=\Big(-\pi \overset{n}{\underset{\alpha ,\beta =1}{\sum }}%
W_{\alpha \beta }\,\frac{\delta _{\alpha }\delta _{\beta }}{\delta
_{n}\delta _{n}}\,d\widehat{\mu }_{\alpha }\wedge d\overline{\widehat{\mu }}%
_{\beta }\Big)\cdot I_{\Delta \times \Delta }\hspace*{120pt}  \label{1-1}
\end{equation}%
\noindent where $I_{\Delta \times \Delta }$ denotes the $\Delta \times
\Delta $ identity matrix. And $-c_{1}(E,h)$ is the closed, positive,
translation-invariant integral 2-form\footnote{%
A mistake for $c_{1}(E)$ in the elliptic-curve case \cite[Theorem 1.1 and
Corollary 8.5]{CCT} has been corrected in \cite{CCT2}.} $\omega ^{\vee
}=Iso^{\ast }\Big(\overset{n}{\underset{\alpha =1}{\sum }}\,\frac{\Delta }{%
\delta _{\alpha }}\,d\eta _{\alpha }\wedge d\eta _{n+\alpha }\Big)$ where $%
Iso:\widehat{M}\rightarrow M^{\ast }$ is the isomorphism defined in
Lemma\thinspace\ \ref{Iso}\vspace*{12pt} and $\Delta =\det \Delta _{\delta }=%
\overset{n}{\underset{\alpha =1}{\prod }}\delta _{\alpha }.$ See $(\ref{2.1})
$ for $\delta _{\alpha }$ and $(\ref{period})$ for $W_{\alpha \beta }$.
\end{theorem}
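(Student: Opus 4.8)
The plan is to compute the curvature of the $L^2$-metric $h$ on the direct image bundle $E$ using the explicit description of the metric $\tilde{h}$ together with the trivialization provided by Theorem~1.1. The overall strategy rests on the standard principle that for a Hermitian holomorphic vector bundle, the full Chern curvature $\Theta(E,h)$ is determined by the second-order variation of the metric, and that this variation can be extracted by differentiating the $L^2$-pairings of theta functions as the parameter $T \in \widehat{M}$ varies. Concretely, I would first fix a holomorphic frame of $E$ near a base point $\widehat{\mu}_0 \in \widehat{M}$ given by a basis of theta functions for $L \otimes T$, and express the metric coefficients $h_{ij}(T) = \int_M \langle s_i, s_j\rangle_{\tilde h}$ as integrals over $M$ against the explicitly normalized fiber metric $h_{L\otimes T}$. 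The curvature in a holomorphic frame is then $\Theta = \bar\partial(h^{-1}\partial h)$, so the main computational task reduces to evaluating $\partial\bar\partial$ of these integrals in the $\widehat{\mu}$-variables.

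The crucial simplification comes from Theorem~1.1: after pulling back along $\varphi_L$, the bundle $E' \otimes L$ is holomorphically trivial on $M$, which means that up to the tensor factor $L$ the theta-function frame can be replaced by a \emph{flat} global frame. This is what makes the curvature computation tractable and forces the rigid tensor-product structure $(\cdots)\cdot I_{\Delta\times\Delta}$ appearing in~(\ref{1-1}): the identity matrix factor $I_{\Delta\times\Delta}$ reflects that all $\Delta$ theta functions transform in the same way under the parameter variation, so the curvature acts as a scalar $(1,1)$-form tensored with the identity on the fiber. I would therefore (i) write down the explicit metric $\tilde h$ on $\pi_1^*L \otimes P$ using the translationally invariant representative $\omega_G$ of $c_1(L)$ and the period data encoded in $W_{\alpha\beta}$ and the $\delta_\alpha$, (ii) substitute this into the $L^2$-integral and carry out the Gaussian-type integration over $M$ (the theta functions being, up to the $L$-factor, standard Gaussians in the flat frame), and (iii) differentiate twice in the base variables $\widehat{\mu}_\alpha$.

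For the second assertion, $-c_1(E,h)$ is the trace of $\frac{i}{2\pi}\Theta$ over the fiber (taking the trace of $I_{\Delta\times\Delta}$ gives the factor $\Delta$), so I would take the trace of~(\ref{1-1}), which converts the coefficient $-\pi W_{\alpha\beta}\,\delta_\alpha\delta_\beta/(\delta_n\delta_n)$ into the stated form $\sum_\alpha \frac{\Delta}{\delta_\alpha}\,d\eta_\alpha\wedge d\eta_{n+\alpha}$ after rewriting in the real period coordinates $\eta_\alpha$ and applying the isomorphism $Iso:\widehat M \to M^*$ of Lemma~\ref{Iso}. The identification of this $2$-form with the integral, closed, positive, translation-invariant class $\omega^\vee$ is then a matter of checking that the coefficients are integers in the chosen integral basis and that positivity follows from the positive-definiteness of the period matrix $W$.

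The hard part will be step~(ii): organizing the explicit integration of the theta-function inner products over $M$ so that the dependence on $\widehat{\mu}$ is displayed cleanly enough to differentiate, and verifying that the off-fiber (non-scalar) contributions genuinely vanish to yield the clean $I_{\Delta\times\Delta}$ factor. I expect this to hinge on carefully tracking how the normalization of $h_{L\otimes T}$ interacts with the Poincar\'e bundle factor $P$, so that the $T$-dependence enters only through the scalar prefactor and not through the fiber indices. The bookkeeping of the period matrix and the quadratic form $W_{\alpha\beta}$, rather than any single conceptual obstacle, is where the real work lies.
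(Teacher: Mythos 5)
Your plan is correct and follows essentially the same route as the paper: the Gaussian integration of the theta-function $L^2$-pairings (the paper's Lemma \ref{Lemma5.2}) shows the Gram matrix is diagonal and, after the $L_0$-twist, constant in $\mu$, which simultaneously yields the triviality of $E'\otimes L$ and the formula $\Theta(E',h_{E'})=-\Theta_{L_0}\cdot I_{\Delta\times\Delta}$, transferred to $E$ via the isogeny $\varphi_{L_0}$ exactly as you propose. Your treatment of $c_1$ by tracing and converting to the integral basis via $Iso$ likewise matches Theorem \ref{5.10}.
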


An immediate corollary is that $E$ admits a Hermitian-Einstein metric (i.e.
the aforementioned metric $h$) with respect to the invariant K\"{a}hler form 
$\omega ^{\vee }$ of Theorem \ref{thm1.2} on $\widehat{M}$. See \cite{Kempf2}
for related discussions aiming at computing the Hermitian-Einstein metric on
these vector bundles $E$ (alternatively called \textit{Picard bundles}
there); this work \cite{Kempf2} is based on algebraic methods involving
Mumford's theta group and invariant theory, but neither the explicit form of
the metric nor the curvature is written down or computed. With the curvature
(\ref{1-1}) it is straightforward to calculate the Chern classes and Chern
character. In this regard an analogous Picard bundle $E$ (in the terminology
of \cite{Kempf1}) on Pic$^{d}(C)$ where $C$ is a curve of genus $\geq 1$ can
be formed, and the total Chern class $C(E)$ can be explicitly determined via
the cycle $W_{d}^{r}(C)$ in Pic$^{d}(C)$ (\cite[pp. 317-319]{ACGH}) or via
the Grothendick-Riemann-Roch formula (\cite[pp. 334-336]{ACGH}). It can be
checked that Theorem \ref{thm1.2} above specialized to $\dim _{\mathbb{C}}M$ 
$=$ $1$ gives the same Chern class as that of \cite{ACGH} for $C$ an
elliptic curve. It is, however, unclear to the authors whether a purely
algebraic analogue of the explicit polarization $\omega ^{\vee }$ (cf.
Remark \ref{R-5-12}) on $\widehat{M}$ above is available in the literature.%
\newline

\noindent \hspace*{12pt} Although the idea of our proofs follows closely that of the
elliptic curve case \cite{CCT}, the technicality here is much more
laborious and a certain amount of computational details reveal their complexity
only in high dimensions. We feel it worthwhile to give a separate treatment that helps to
streamline the argument. So whenever the proof obviously duplicates the elliptic
curve case, we simply point this out and do not reproduce the proof (e.g. 
Proposition \ref{Prop5.6}); we basically give only those proofs that require
computation and reasoning not easily foreseen in the one-dimensional case.
The reader is invited to turn to \cite{CCT}; see also 
\cite{Bern}, \cite{Kempf1}, \cite{Kempf2}, \cite{P2} and \cite{To} for
related discussions. The setting of \cite{To} uses an $L^{2}$-metric in
greater generality and arrives at the \textquotedblleft projective flatness"
of the bundle $E$ (for families of polarized Abelian varieties); see also
their previous work, in particular the part on the poly-stability of Picard
bundles \cite[Theorems 3 and 4]{TW2}. For
background materials the work \cite{Pol} contains a wealth of information on
both the analytic and algebraic aspects of Abelian varieties. \newline

\noindent \hspace*{12pt} Finally let us make the following remarks. The use
of theta functions is instrumental for both proofs of Theorem 1.1 and 1.2.
By contrast, a purely algebraic proof of Theorem 1.1 over any algebraically
closed field (not necessarily of characteristic zero) can be found in \cite%
{Kempf1} by G. Kempf for different purpose. Those algebraic proofs are
basically conceptual ones. It seems natural to ask for a more down-to-earth
point of view; such a possibility is hopefully provided in this paper.
Another merit of this paper lies perhaps in our ongoing work, which is to
study certain \textquotedblleft spectral bundles" naturally arising from the
above differential geometric setting. These spectral bundles include the
above bundle $E$ as the lowest-energy level (cf. \cite[Introduction]{CCT}). 
By making use of the theta functions here, we can explore in depth the
holomorphic structure of those spectral bundles on an Abelian variety and
explicitly describe the associated eigen-sections at higher level (cf. \cite%
{CCT3}). Let us mention in passing that the theta function method here is
expected to be also relevant to analogous problems in the $p$-adic setting
(see e.g. \cite{Roq}); we hope to discuss it elsewhere in the future.\newline

\noindent \textbf{Acknowledgements}\vspace*{4pt}\newline
The first author is supported by Xiamen University Malaysia Research Fund
(Grand No. XMUMRF /2019- C3/IMAT/0010). The second author would like to
thank the Ministry of Science and Technology of Taiwan for the support:
grant no. MOST 110-2115-M-001-015 and the National Center for Theoretical
Sciences for the constant support. The third author thanks the Ministry of
Education of Taiwan for the financial support. We would also like to thank Professor
B. Berndtsson and Professor L. Weng for their interest. We are grateful to the referees
for their valuable comments and suggestions and for bringing the reference \cite{TW2}
to our attention. 

\section{\protect\bigskip \textbf{Holomorphic Line Bundles over }$%
M=V/\Lambda $}

\noindent \hspace*{12pt} In this section we basically follow the notation
and convention of \cite{GH}. Let $V$ be a complex vector space of dimension $%
n$ and $\Lambda \subseteq V$ a discrete lattice of rank $2n$. Assume that $%
M=V/\Lambda $ is an Abelian variety, and let $\omega $ be an invariant
integral form, positive of type $(1,1).$ Choose a basis $\lambda
_{1},...,\lambda _{2n}$ for $\Lambda $ over $\mathbb{Z}$ such that in terms
of dual coordinates $x_{1},...,x_{2n}$\ on $V$ 
\begin{equation}
\omega =\overset{n}{\underset{i=1}{\Sigma }}\delta _{i}\,dx_{i}\wedge
dx_{n+i}\text{, \ \ \ }\delta _{i}\in 
\mathbb{N}
\text{, \ }\delta _{i}\mid \delta _{i+1}\text{, \ }i=1,\text{ }\cdot \cdot
\cdot ,\text{ }n-1.  \label{2.1}
\end{equation}

\noindent \hspace*{12pt} With the complex basis $v_{\alpha }={\delta
_{\alpha }}^{-1}\lambda _{\alpha }$ of $V,$ $\alpha =1,...,n$, writing $%
\lambda _{\alpha }=\delta _{\alpha }v_{\alpha }$, $\lambda _{n+\alpha }=%
\overset{n}{\underset{k=1}{\Sigma }}\tau _{\alpha k}v_{k}$ we take the 
\textit{period matrix} $\Omega $ of $\Lambda \subseteq V$ to be the $n\times
2n$ matrix $(\Delta _{\delta },Z)$ where 
\begin{equation}
(\Delta _{\delta })={\scriptsize 
\begin{pmatrix}
\delta _{1} &  & 0 \\ 
& \ddots &  \\ 
0 &  & \delta _{n}%
\end{pmatrix}%
}\text{, \ }Z=%
\begin{pmatrix}
\tau _{\alpha \beta }%
\end{pmatrix}%
;\hspace*{10pt}(\func{Im}Z)^{-1}=:W=(W_{\alpha \beta }).  \label{period}
\end{equation}%
Here $Z$ is symmetric and $\func{Im}Z$ is positive definite. \newline

\begin{notation}
\label{N-2-1} A vector $v=\overset{n}{\underset{\alpha =1}{\Sigma }}%
z_{\alpha }v_{\alpha }\in V$ is also expressed by its complex coordinates $%
(z_{1},...,z_{n})$, $z_{\alpha }=z_{\alpha 1}+iz_{\alpha 2}$ on $V$, with $%
dz_{\alpha }$ dual to $v_{\alpha }$. We use the same notation $%
(z_{1},...,z_{n})$ for complex coordinates on $M$ whenever there is no
danger of confusion.\newline

\noindent One has 
\begin{equation}  \label{complaxreal}
dz_{\alpha }=\delta _{\alpha }dx_{\alpha }+\overset{n}{\underset{k=1}{\sum }%
\,}\tau _{\alpha k}\text{ }dx_{n+k}\text{, \ \ \ }d\overline{z}_{\alpha
}=\delta _{\alpha }dx_{\alpha }+\overset{n}{\underset{k=1}{\sum }\,}%
\overline{\tau }_{\alpha k}\text{ }dx_{n+k}\text{, \ \ }\alpha =1,...,n.
\end{equation}
\end{notation}

\noindent Or, equivalently (the following is needed for (\ref{5.5})) 
\begin{equation}
\begin{pmatrix}
dx_{1} \\ 
\vdots \\ 
dx_{2n}%
\end{pmatrix}%
=%
\begin{pmatrix}
\frac{i}{2}\Delta _{\delta }^{-1}\overline{Z}\,W\vspace*{8pt} \\ 
-\frac{i}{2}W%
\end{pmatrix}%
\begin{pmatrix}
dz_{1} \\ 
\vdots \\ 
dz_{2n}%
\end{pmatrix}%
+%
\begin{pmatrix}
-\frac{i}{2}\Delta _{\delta }^{-1}Z\,W\vspace*{8pt} \\ 
\frac{i}{2}W%
\end{pmatrix}%
\begin{pmatrix}
d\overline{z}_{1} \\ 
\vdots \\ 
d\overline{z}_{2n}%
\end{pmatrix}%
.\vspace*{12pt}\hspace*{60pt}  \label{realcomplex}
\end{equation}%
\vspace*{14pt} \noindent \hspace*{12pt} A holomorphic line bundle over $M$
can be described by its \textit{multipliers} $\left\{ e_{\lambda _{i}}\in 
\mathcal{O}^{\ast }(V)\right\} $ \cite[p.308]{GH}. We define $%
L_{0}\rightarrow M$ to be the holomorphic line bundle given by the
multipliers%
\begin{equation}
e_{\lambda _{\alpha }}(v)\equiv 1\text{, \ \ }e_{\lambda _{n+\alpha
}}(v)\equiv e^{-2\pi iz_{\alpha }-\pi i\tau _{\alpha \alpha }}\text{, \ \ }%
\alpha =1,...,n.\hspace*{120pt}  \label{Multipliers}
\end{equation}%
It is known that $c_{1}(L_{0})=[\omega ]$, $\omega ={\overset{n}{\underset{%
i=1}{\sum }}}\delta _{i}\,dx_{i}\wedge dx_{n+i}$ as given in $(\ref{2.1})$(%
\cite[p.310]{GH}). The associated theta functions satisfy 
\begin{equation}
\begin{cases}
\theta (z_{1,...},z_{\alpha }+\delta _{\alpha },...,z_{n})=\theta
(z_{1},...,z_{n}) \\ 
\theta (z_{1}+\tau _{\alpha 1},z_{2}+\tau _{\alpha 2},...,z_{n}+\tau
_{\alpha n})=e^{-2\pi iz_{\alpha }-\pi i\tau _{\alpha \alpha }}\theta
(z_{1},...,z_{n}),%
\end{cases}%
\alpha =1,...,n.  \label{2.3}
\end{equation}%
By the same token, a Hermitian metric $h_{L_{0}}(v)>0$ on $L_{0}$ is
charaterized by the quasi-periodic property: 
\begin{equation}
\begin{cases}
h_{L_{0}}(z_{1,...},z_{\alpha }+\delta _{\alpha
},...,z_{n})=h_{L_{0}}(z_{1},...,z_{n}) \\ 
h_{L_{0}}(z_{1}+\tau _{\alpha 1},z_{2}+\tau _{\alpha 2},...,z_{n}+\tau
_{\alpha n})=|e^{2\pi iz_{\alpha }+\pi i\tau _{\alpha \alpha
}}|^{2}h_{L_{0}}(z_{1},...,z_{n}),%
\end{cases}%
\alpha =1,...,n.  \label{2.4}
\end{equation}%
\vspace*{8pt}

\begin{lemma}
\label{Lemma 2.1} For the holomorphic line bundle $L_{0}\rightarrow M$
above, one has the quasi-periodic entire functions on $V$ 
\begin{align}
\theta _{m}(z_{1},...,z_{n}) :=\sum\limits_{k\in \mathbb{Z}^{n}}\Big( e^{\pi
i{\overset{n}{\underset{\alpha ,\beta =1}{\sum }}}k_{\alpha }k_{\beta }\tau
_{\alpha \beta }}e^{2\pi i{\overset{n}{\underset{\alpha ,\beta =1}{ \sum }}}%
\tau _{\alpha \beta }\frac{m_{\alpha }}{\delta _{\alpha }}k_{\beta }}e^{2\pi
i{\overset{n}{\underset{\alpha =1}{\sum }}}\frac{(k_{\alpha }\delta _{\alpha
}+m_{\alpha })}{\delta _{\alpha }}z_{\alpha }}\, \Big) \hspace*{100pt}
\label{2.5} \\
\mbox{where \ }k= (k_{1,}...,k_{n})\in \mathbb{Z}^{n}\text{ and\ }m\in 
\mathfrak{M}=\{(m_{1},...,m_{n})\mid \ 0\leq m_{\alpha }<\delta _{\alpha },\
m_{\alpha }\in 0\cup \mathbb{N},\ \alpha =1,...,n \},  \notag
\end{align}%
as a basis of global holomorphic sections of $L_{0}$ and $%
h_{L_{0}}(v)=e^{-2\pi \overset{n}{\underset{\ \alpha ,\beta =1}{\sum }}{W}%
_{\alpha \beta }z_{\alpha 2}z_{\beta 2}}$ as a metric on $L_{0}$ where $%
W=(W_{\alpha \beta })$ is the inverse matrix of $\func{Im}Z$.
\end{lemma}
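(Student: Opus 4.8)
The plan is to verify in turn that each $\theta_m$ is entire, that it obeys the two quasi-periodicity relations (\ref{2.3}), and that the family $\{\theta_m\}_{m\in\mathfrak{M}}$ is a basis of $H^0(M,L_0)$, and finally to check the stated $h_{L_0}$ against (\ref{2.4}). For holomorphy I would note that the $k$-dependent part of the exponent in (\ref{2.5}) has real part $-\pi\sum_{\alpha,\beta}k_\alpha k_\beta(\operatorname{Im}Z)_{\alpha\beta}$ plus terms linear in $k$; since $\operatorname{Im}Z>0$ this forces Gaussian decay in $k$, so the series converges absolutely and uniformly on compact subsets of $V$ and defines an entire function.

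For the quasi-periodicity I would first complete the square. Setting $\mu=(m_\alpha/\delta_\alpha)_\alpha$ and using that $Z$ is symmetric, the exponent of the $k$-th summand rewrites as $\pi i(k+\mu)^{\top}Z(k+\mu)+2\pi i(k+\mu)^{\top}z-\pi i\mu^{\top}Z\mu$, so that $\theta_m(z)=e^{-\pi i\mu^{\top}Z\mu}\sum_{k}e^{\pi i(k+\mu)^{\top}Z(k+\mu)+2\pi i(k+\mu)^{\top}z}$. The first relation of (\ref{2.3}) is then immediate, since replacing $z_\alpha$ by $z_\alpha+\delta_\alpha$ alters the exponent only by $2\pi i(k_\alpha\delta_\alpha+m_\alpha)\in 2\pi i\mathbb{Z}$. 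The second relation is the heart of the argument: translating $z$ by the $\alpha$-th column of $Z$ and then replacing the summation index $k$ by $k-e_\alpha$, I expect the two cross-terms $\mp 2\pi i(k+\mu)^{\top}Ze_\alpha$ to cancel, leaving precisely the multiplier $e^{-2\pi i z_\alpha-\pi i\tau_{\alpha\alpha}}$. Tracking this bookkeeping—the exact shift, the constant $e_\alpha^{\top}Ze_\alpha=\tau_{\alpha\alpha}$, and the symmetry of $Z$—is the most laborious and error-prone step, and is where I expect the main difficulty to lie.

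For the basis claim I would argue intrinsically. Any holomorphic section of $L_0$ is $\delta_\alpha$-periodic in $z_\alpha$ by the first line of (\ref{2.3}), hence expands as $\sum_{p\in\mathbb{Z}^n}c_p\,e^{2\pi i\sum_\alpha (p_\alpha/\delta_\alpha)z_\alpha}$; the second line of (\ref{2.3}) then forces the recursion $c_{p+\delta_\alpha e_\alpha}=c_p\,e^{2\pi i\sum_\gamma (p_\gamma/\delta_\gamma)\tau_{\alpha\gamma}+\pi i\tau_{\alpha\alpha}}$, which determines every $c_p$ from the values $c_m$ with $m\in\mathfrak{M}$, these being representatives of $\mathbb{Z}^n/\bigoplus_\alpha\delta_\alpha\mathbb{Z}$. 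Convergence of the resulting series is again guaranteed by $\operatorname{Im}Z>0$, and iterating the recursion with $c_m=1$ reproduces exactly the expansion (\ref{2.5}). This simultaneously shows that the $\theta_m$ span $H^0(M,L_0)$, that they are linearly independent (their Fourier supports are disjoint residue classes), and that $\dim H^0(M,L_0)=|\mathfrak{M}|=\prod_\alpha\delta_\alpha$, consistent with $c_1(L_0)=[\omega]$.

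Finally, to confirm that $h_{L_0}(v)=e^{-2\pi\sum_{\alpha,\beta}W_{\alpha\beta}z_{\alpha 2}z_{\beta 2}}$ is a metric on $L_0$ I would check the two lines of (\ref{2.4}). The first holds because shifting $z_\alpha$ by the real number $\delta_\alpha$ leaves every imaginary part $z_{\gamma 2}$ unchanged. For the second, replacing each $z_\gamma$ by $z_\gamma+\tau_{\alpha\gamma}$ sends $z_{\gamma 2}$ to $z_{\gamma 2}+(\operatorname{Im}Z)_{\alpha\gamma}$; expanding the quadratic form and invoking $W=(\operatorname{Im}Z)^{-1}$ together with the symmetry of $Z$, I expect the linear cross-term to collapse to $2z_{\alpha 2}$ and the pure-shift term to give $(\operatorname{Im}Z)_{\alpha\alpha}$, producing the extra factor $e^{-2\pi(2z_{\alpha 2}+(\operatorname{Im}Z)_{\alpha\alpha})}$. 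Since $|e^{2\pi i z_\alpha+\pi i\tau_{\alpha\alpha}}|^2=e^{-4\pi z_{\alpha 2}-2\pi(\operatorname{Im}Z)_{\alpha\alpha}}$, this matches (\ref{2.4}) and completes the verification.
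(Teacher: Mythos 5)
Your proposal is correct, and every computation you flag as delicate does go through: the completed-square form of the exponent is $\pi i(k+\mu)^{\top}Z(k+\mu)-\pi i\mu^{\top}Z\mu+2\pi i(k+\mu)^{\top}z$ with $\mu_\alpha=m_\alpha/\delta_\alpha$, the shift $k\mapsto k-e_\alpha$ does cancel the cross-terms and produces exactly $e^{-2\pi iz_\alpha-\pi i\tau_{\alpha\alpha}}$, and the metric check reduces to $\sum_\beta W_{\gamma\beta}(\operatorname{Im}Z)_{\beta\alpha}=\delta_{\gamma\alpha}$ as you say. Your route differs from the paper's in two respects. For quasi-periodicity the paper does not complete the square by hand but records the identity $\theta_m(z)=e^{2\pi i\sum_\alpha(m_\alpha/\delta_\alpha)z_\alpha}\,\vartheta\bigl(z+Z\mu\bigr)$ (its equation (\ref{2.6})) and quotes the known functional equation of the Riemann theta function $\vartheta$; your completed-square expression is literally the same identity unwound, so this part is a cosmetic difference. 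The genuine divergence is in the basis claim: the paper cites $\dim H^0(M,L_0)=\prod_\alpha\delta_\alpha$ from Griffiths--Harris and defers linear independence to the $L^2$-orthogonality computation of Lemma \ref{Lemma5.2} (a forward reference), whereas you prove spanning, independence, and the dimension count all at once by Fourier-expanding an arbitrary section, deriving the recursion $c_{p+\delta_\alpha e_\alpha}=c_p\,e^{2\pi i\sum_\gamma(p_\gamma/\delta_\gamma)\tau_{\alpha\gamma}+\pi i\tau_{\alpha\alpha}}$, and observing that the cosets of $\bigoplus_\alpha\delta_\alpha\mathbb{Z}$ in $\mathbb{Z}^n$ give disjoint Fourier supports. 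Your argument is self-contained and avoids both the external citation and the forward reference (it is in essence the classical proof of the dimension formula), at the cost of being longer; the paper's choice buys brevity and reuses the orthogonality computation it needs anyway in Section 4. You also explicitly verify the quasi-periodicity (\ref{2.4}) of $h_{L_0}$, which the paper treats as immediate from its characterization.
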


\begin{proof}
Using the Riemann $\theta $-function $\vartheta (v)={{\underset{k\in \mathbb{%
Z}^{n}}{\sum }}}(e^{\pi i{\overset{n}{\underset{\alpha ,\beta =1}{\sum }}}%
k_{\alpha }k_{\beta }\tau _{\alpha \beta }}e^{2\pi i\overset{n}{\underset{%
\alpha =1}{\sum }}k_{\alpha }z_{\alpha }})$ (cf. \cite[p.320]{GH}) and
comparing with $(\ref{2.5})$, we have 
\begin{equation}
\theta _{m}(z_{1},...,z_{n})=e^{2\pi i{\overset{n}{\underset{\alpha =1}{\sum 
}}\frac{m_{\alpha }}{\delta _{\alpha }}z_{\alpha }}}\cdot \vartheta \Big(%
z_{1}+\overset{n}{\underset{\beta =1}{\sum }}\frac{m_{\beta }}{\delta
_{\beta }}\tau _{1\beta },...,z_{n}+\overset{n}{\underset{\beta =1}{\sum }}%
\frac{m_{\beta }}{\delta _{\beta }}\tau _{n\beta }\,\Big).  \label{2.6}
\end{equation}%
By the quasi-periodic property of $\vartheta (v)$ one finds that $\theta
_{m}(z_{1},...,z_{n})$ satisfies the quasi-periodic property (\ref{2.3})
(cf. \cite[Lemma 2.1]{CCT} for the one-dimensional case whose argument can
be easily generalized to the present case). The linear independence of $%
\left\{ \theta _{m}\right\} _{m}$ is proved later in Lemma \ref{Lemma5.2}.
With the fact $\dim H^{0}(M,L_{0})=\overset{n}{\underset{\alpha =1}{\Pi }}%
\delta _{\alpha }$ \cite[p.317]{GH} it follows that $\left\{ \theta
_{m}\right\} _{m}$ span $H^{0}(M,L_{0})$.
\end{proof}

\noindent \hspace*{12pt} For any $\mu =\overset{n}{\underset{\alpha =1}{{%
\sum }}}\,\mu _{\alpha }v_{\alpha } \in V$ where $\mu_{\alpha}=\mu_{\alpha
1} + i \mu_{\alpha 2} $, we have a map $\mathcal{\tau }_{\mu }:M\rightarrow
M $ defined by the translation by $[\mu ]\in M$. Let $L_{\mu } \coloneqq
{\mathcal{\tau }_{\mu }}^{\ast }L_{0}\rightarrow M$, which can be given by
multipliers \newline
\begin{equation}
e_{\lambda _{\alpha }}(v)\equiv 1\hspace*{6pt}\text{, }e_{\lambda _{n+\alpha
}}(v)\equiv e^{-2\pi i(z_{\alpha }+\mu _{\alpha })-\pi i\tau _{\alpha \alpha
}}\text{ \hspace*{12pt} }\alpha =1,...,n.  \label{eqnL}
\end{equation}%
\noindent Any global holomorphic sections $\tilde{\theta}$ of $L_{\mu
}\rightarrow M$ can be described via quasi-periodic entire functions $\theta 
$ on $V$ satisfying 
\begin{equation}
\begin{cases}
\theta (z_{1},...,z_{\alpha }+\delta _{\alpha },...,z_{n})=\theta
(z_{1},..,z_{n}) \\ 
\theta (z_{1}+\tau _{\alpha 1},z_{2}+\tau _{\alpha 2},...,z_{n}+\tau
_{\alpha n})=e^{-2\pi i(z_{\alpha }+\mu _{\alpha })-\pi i\tau _{\alpha
\alpha }}\ \theta (z_{1},...,z_{n}),%
\end{cases}%
\alpha =1,...,n,\vspace*{8pt}  \label{theta section}
\end{equation}%
with a metric $h_{L_{\mu }}(v)$ on $L_{\mu }\rightarrow M$: 
\begin{equation}
\begin{cases}
h_{L_{\mu }}(z_{1},...,z_{\alpha }+\delta _{\alpha },...,z_{n})=h_{L_{\mu
}}(z_{1},...,z_{n}) \\ 
h_{L_{\mu }}(z_{1}+\tau _{1\alpha },z_{2}+\tau _{2\alpha },...,z_{n}+\tau
_{n\alpha })=|e^{2\pi i(z_{\alpha }+\mu _{\alpha })+\pi i\tau _{\alpha
\alpha }}|^{2}h_{L_{\mu }}(z_{1},...,z_{n}),%
\end{cases}%
\alpha =1,...,n.\vspace*{8pt}  \label{hLmu}
\end{equation}

\noindent \hspace*{12pt} Since all the holomorphic line bundles on $M$
having the same first Chern class as that of $L_{0}$ can be represented as a
translate of $L_{0}$ \cite[p.312]{GH}, it follows from Lemma $\ref{Lemma 2.1}
$, $(\ref{theta section})$ and $(\ref{hLmu})$ that\bigskip

\begin{lemma}
\label{Lemma 2.2} Fix $\mu \in V$. For $L_{\mu }\rightarrow M$ as above, one
has the quasi-periodic entire functions on $V$ (cf. Lemma $\ref{Lemma 2.1}$
for relevant notations below)$:$ 
\begin{align*}
\theta _{m}(v;\mu )& :=\theta _{m}(v+\mu )=\sum\limits_{k\in \mathbb{Z}^{n}}%
\Big( e^{\pi i{\overset{n}{\underset{\alpha ,\beta =1}{\sum }}}k_{\alpha
}k_{\beta }\tau _{\alpha \beta }}e^{2\pi i{\overset{n}{\underset{\alpha
,\beta =1}{\sum }}}\tau _{\alpha \beta }\frac{m_{\alpha }}{\delta _{\alpha }}%
k_{\beta }}e^{2\pi i{\overset{n}{\underset{\alpha =1}{\sum }}}\frac{%
(k_{\alpha }\delta _{\alpha }+m_{\alpha })}{\delta _{\alpha }}(z_{\alpha
}+\mu _{\alpha })}\, \Big) , \\
\hspace*{10pt}\mbox{where \ }k&\in \mathbb{Z}^{n}\text{ and }m\in \mathfrak{M%
}
\end{align*}%
as a basis of $H^{0}(M,L_{\mu })$, and $h_{L_{\mu }}(v;\mu
):=h_{L_{0}}(v+\mu )=e^{-2\pi \overset{n}{\underset{\alpha ,\beta =1}{\sum }}%
{W_{\alpha \beta }}\func{Im}(z_{\alpha }+\mu _{\alpha })\func{Im}(z_{\beta
}+\mu _{\beta })}$ as a metric on $L_{\mu }$.
\end{lemma}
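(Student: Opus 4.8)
The plan is to deduce Lemma~\ref{Lemma 2.2} directly from Lemma~\ref{Lemma 2.1} by pulling everything back through the translation $\mathcal{\tau}_\mu$, since $L_\mu$ was \emph{defined} as $\mathcal{\tau}_\mu^\ast L_0$. The conceptual point is that pullback by translation is a geometrically transparent operation: a holomorphic section $\theta$ of $L_0$ pulls back to the section $v \mapsto \theta(v+\mu)$ of $L_\mu$, and likewise for the metric. So the two assertions of the lemma---that the listed $\theta_m(v;\mu)$ form a basis of $H^0(M,L_\mu)$, and that $h_{L_\mu}(v;\mu)$ is a metric on $L_\mu$---ought to follow formally once I check that $\mathcal{\tau}_\mu^\ast$ carries the data of Lemma~\ref{Lemma 2.1} to the data claimed here.

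First I would verify the multiplier/quasi-periodicity bookkeeping. The multipliers $(\ref{eqnL})$ for $L_\mu$ are obtained from those of $L_0$ in $(\ref{Multipliers})$ by the substitution $z_\alpha \mapsto z_\alpha + \mu_\alpha$, which is exactly what the pullback $\mathcal{\tau}_\mu$ does on the universal cover $V$. Consequently, to say $\theta$ is a section of $L_0$ is to say it satisfies $(\ref{2.3})$; and the shifted function $\theta(v+\mu)$ then automatically satisfies the transformation law $(\ref{theta section})$ with the extra factor $e^{-2\pi i\mu_\alpha}$ appearing precisely as in $(\ref{eqnL})$. I would record this as the (essentially one-line) check that $\theta_m(v;\mu) := \theta_m(v+\mu)$ satisfies $(\ref{theta section})$, using the already-established fact that $\theta_m(v)$ satisfies $(\ref{2.3})$. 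The explicit series for $\theta_m(v;\mu)$ in the statement is then nothing but $(\ref{2.5})$ with $z_\alpha$ replaced throughout by $z_\alpha+\mu_\alpha$, so writing it down requires no new computation.

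Next I would transfer the basis property and the metric. Because $\mathcal{\tau}_\mu:M\to M$ is a biholomorphism, the induced map $\mathcal{\tau}_\mu^\ast: H^0(M,L_0)\to H^0(M,L_\mu)$ is a linear isomorphism; hence the image $\{\theta_m(v;\mu)\}_m$ of the basis $\{\theta_m\}_m$ from Lemma~\ref{Lemma 2.1} is again a basis, of the correct dimension $\prod_\alpha \delta_\alpha$. For the metric, I would apply the same substitution $v\mapsto v+\mu$ to $h_{L_0}(v)=e^{-2\pi\sum W_{\alpha\beta}z_{\alpha 2}z_{\beta 2}}$, noting that $z_{\alpha 2}=\operatorname{Im} z_\alpha$ becomes $\operatorname{Im}(z_\alpha+\mu_\alpha)$, which yields exactly the stated $h_{L_\mu}(v;\mu)$; and that the quasi-periodicity $(\ref{2.4})$ for $h_{L_0}$ transforms into $(\ref{hLmu})$ under the shift, confirming it is a genuine metric on $L_\mu$. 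The only mild subtlety---worth flagging but not genuinely hard---is to confirm that the positivity and the matching of the quasi-periodic factors survive the shift cleanly; this is immediate because the shift acts by a translation on the imaginary parts and by the fixed factor $e^{-2\pi i\mu_\alpha}$ on the multipliers. Since the excerpt explicitly remarks (just before the lemma) that every line bundle with $c_1=c_1(L_0)$ is a translate of $L_0$, no further generality is needed, and the whole argument is a routine transport-of-structure along $\mathcal{\tau}_\mu$. Indeed, the paper itself signals this by phrasing the lemma as following ``from Lemma~\ref{Lemma 2.1}, $(\ref{theta section})$ and $(\ref{hLmu})$,'' so I expect the author's proof to be correspondingly short.
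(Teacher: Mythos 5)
Your proposal is correct and matches the paper's (implicit) argument exactly: the paper offers no separate proof environment but states that the lemma ``follows from Lemma \ref{Lemma 2.1}, (\ref{theta section}) and (\ref{hLmu})'' via the fact that $L_{\mu}=\tau_{\mu}^{\ast}L_{0}$, which is precisely your transport-of-structure along the translation. Your verification of the shifted quasi-periodicity laws and of the basis property via the isomorphism $\tau_{\mu}^{\ast}:H^{0}(M,L_{0})\to H^{0}(M,L_{\mu})$ supplies exactly the routine details the authors chose to omit.
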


\bigskip \noindent \hspace*{12pt} Let $\text{Pic}^{0}(M)=\widehat{M}$ denote
the group of holomorphic line bundles on $M$ with the first Chern class
zero. In the notation of \cite[p.318]{GH} for the natural identification 
\begin{equation}  \label{Pic}
\text{Pic}^{0}(M)\cong \frac{H^{1}(M,\mathcal{O})}{H^{1}(M,\mathbb{Z})}\cong 
\frac{H^{0,1}(M)}{H^{1}(M,\mathbb{Z})}\hspace*{60pt}
\end{equation}%
the image of $H^{1}(M,\mathbb{Z})$ in $\overline{V}^{\,\ast }=H^{0,1}(M) $
is the lattice ${\overline{\Lambda }}^{\,\ast }=$ $\mathbb{Z}\{dx_{1}^{\ast
},...,dx_{2n}^{\ast }\}$ which consists of conjugate linear functionals on $%
V $ whose real part is half integral on $\Lambda \subseteq V$. Recall the
period matrix $(\Delta_{\delta}, Z)$, we write the conjugate linear parts of 
$dx_{1},...,dx_{2n}$ as ($T$ for transpose below)\vspace*{8pt} 
\begin{align}  \label{xstarzpar}
\begin{cases}
(dx_{1}^{\ast },...,dx_{n}^{\ast } )^{T} & =(\frac{-i}{2}(\Delta
_{\delta})^{-1} Z\, (\func{Im}\, Z )^{-1})(d\overline{z}_{1},...,d\overline{z%
}_{n})^{T}\hspace*{100pt}\vspace*{8pt} \\ 
(dx_{n+1}^{\ast },...,dx_{2n}^{\ast } )^{T} & =(\frac{i}{2}(\func{Im}\, Z
)^{-1})(d\overline{z}_{1},...,d\overline{z}_{n})^{T}.%
\end{cases}%
\end{align}
\newline
\noindent Reordering $\{dx_{i}^{\ast }\}$ by setting $dy_{\alpha }^{\ast
}=-dx_{n+\alpha }^{\ast }$ and $dy_{n+\alpha }^{\ast }=dx_{\alpha }^{\ast }$%
, for $\alpha =1,...,n$. If we let $v_{\alpha }^{\ast }= \frac{%
\delta_{\alpha}}{\delta_{n}} dy_{\alpha }^{\ast }$, we have $\overline{%
\Lambda }^{\,\ast }=$ $\mathbb{Z}\{dy_{1}^{\ast },...,dy_{2n}^{\ast }\}$
with $(dy_{1}^{\ast },...,dy_{2n}^{\ast })^{T}=(\delta _{n}(\Delta _{\delta
})^{-1}\mid \delta_{n}(\Delta _{\delta })^{-1}Z\, (\Delta _{\delta })^{-1})($
$v_{1}^{\ast },...,$ $v_{n}^{\ast })^{T}$. One has $\widehat{M}=\overline{V}%
^{\,\ast }/{\overline{\Lambda }}^{\,\ast }$.\newline

\noindent \hspace*{12pt} With $v_{\alpha}^{\ast}$ above, we let $(\widehat{%
\mu }_{1},...,\widehat{\mu }_{n})$ be the complex coordinates of $\widehat{%
\mu }=\overset{n}{\underset{\alpha =1}{{\sum }}}\,\widehat{\mu }_{\alpha
}v_{\alpha }^{\ast }$ on $\overline{{\normalsize V}}^{\,\ast }$ (and on $%
\widehat{M}$) $;$ $d\widehat{\mu }_{\alpha }$ are the forms dual to $%
v_{\alpha }^{\ast }$. Similarly, we write $\mu = \underset{\alpha=1}{\overset%
{n}{\sum}} \mu_{\alpha} v_{\alpha} \in V$ with $v_{\alpha}$ in the beginning
of this section. Denote by $[\widehat{\mu}]$, $[\mu]$ the class of $\widehat{%
\mu}$, $\mu$ in $\widehat{M}$, $M$ respectively.\newline

\noindent \hspace*{12pt} A well-known map $\varphi _{L_{0}}:M{\normalsize %
\rightarrow }$ Pic$^{0}(M)$ $=$ $\hat{M}$ via the translation $\tau _{\mu }:M%
{\normalsize \rightarrow M}$ with $[\mu ]\in M$ is given by $\varphi
_{L_{0}}([\mu ])=\tau _{\mu }{}^{\ast }L_{0}{\normalsize \otimes }%
L_{0}^{\ast }$ where $L_{0}^{\ast }$ is the dual line bundle of $L_{0}$.
Denote by $\widetilde{\varphi }_{L_{0}}:V{\normalsize \rightarrow }\overline{%
{\normalsize V}}^{\,\ast }$ the natural lifting map. The following property
is well known (see \cite[Property 1]{CCT} and \cite[pp. 315-317]{GH} for
more details):

\begin{property}
$\widetilde{\label{Prop 2.3}\varphi }_{L_{0}}:V{\normalsize \rightarrow }%
\overline{{\normalsize V}}^{\,\ast }$is a complex linear transformation such
that 
\begin{equation}  \label{complex linear}
\widetilde{\varphi }_{L_{0}}(v_{\alpha })=\frac{\delta_{n}}{\delta_{\alpha}}%
\, v_{\alpha }^{\ast },\hspace*{20pt} \alpha =1,...,n.\hspace*{40pt}
\end{equation}
\noindent Thus $\widehat{\mu}_{\alpha}=\frac{\delta_{n}}{\delta_{\alpha}}
\mu_{\alpha}$, $\alpha=1,...,n$, if $\widehat{\mu}=\widetilde{\varphi}%
_{L_{0}}(\mu)$.\newline
\end{property}

\noindent \hspace*{12pt} We denote by $P_{[\widehat{\mu }]}$ or simply $P_{%
\widehat{\mu }}$ the line bundle corresponding to $[\widehat{\mu }]\in 
\widehat{M}=\text{Pic}^{0}(M)$ in (\ref{Pic}). From Property \ref{Prop 2.3}
with $\widehat{\mu }=\varphi _{L_{0}}(\mu )$, one has 
\begin{equation}
P_{\widehat{\mu }}=P_{\varphi _{L_{0}}([\mu ])}\cong \tau _{\mu }{}^{\ast
}L_{0}{\normalsize \otimes }L_{0}^{\ast },\hspace*{20pt}\forall \mu \in V.
\end{equation}

\noindent Recall the Poincar\'{e} line bundle \cite[p.328]{GH}:

\begin{lemma}
\label{Poincare}There is a unique holomorphic line bundle $P{\normalsize %
\rightarrow M\times }\widehat{{\normalsize M}}$ called the Poincar\'{e} line
bundle satisfying $i)$ $P|_{M\times \{\widehat{\mu }\}}\cong P_{\widehat{\mu 
}}$ and $ii)$ $P|_{\{0\}\times \widehat{M}}$ is a holomorphically trivial
line bundle.
\end{lemma}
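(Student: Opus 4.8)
The plan is to establish existence by exhibiting an explicit factor of automorphy for $P$ on the universal cover $V\times \overline{V}^{\,\ast}$ of $M\times \widehat{M}=(V\times \overline{V}^{\,\ast})/(\Lambda \times \overline{\Lambda}^{\,\ast})$, and to deduce uniqueness from the seesaw principle. For the construction I would first pin down the multipliers in the $M$-direction. From the identification $P_{\varphi_{L_0}([\mu])}\cong \tau_{\mu}^{\ast}L_{0}\otimes L_{0}^{\ast}$ together with $(\ref{Multipliers})$ and $(\ref{eqnL})$, a direct cancellation shows that $\tau_{\mu}^{\ast}L_{0}\otimes L_{0}^{\ast}$ has multipliers $e_{\lambda_{\alpha}}\equiv 1$ and $e_{\lambda_{n+\alpha}}\equiv e^{-2\pi i\mu_{\alpha}}$; using $\mu_{\alpha}=(\delta_{\alpha}/\delta_{n})\widehat{\mu}_{\alpha}$ from Property \ref{Prop 2.3} this rewrites the $M$-slice multipliers purely in terms of the coordinate $\widehat{\mu}$ on $\widehat{M}$. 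This already guarantees property $i)$: the restriction of $P$ to $M\times \{\widehat{\mu}\}$ carries exactly the multipliers of $P_{\widehat{\mu}}$.

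The substance of the existence proof is to complete these $M$-direction multipliers to a genuine $1$-cocycle for the full lattice $\Lambda \times \overline{\Lambda}^{\,\ast}$ by choosing the $\widehat{M}$-direction multipliers $e_{(0,\widehat{\lambda}^{\ast})}(v,\widehat{\mu})$ compatibly. Since $\Lambda \times \overline{\Lambda}^{\,\ast}$ is abelian, the cocycle condition reduces to the commutation relation $e_{(\lambda,0)}(v,\widehat{\mu}+\widehat{\lambda}^{\ast})\,e_{(0,\widehat{\lambda}^{\ast})}(v,\widehat{\mu})=e_{(0,\widehat{\lambda}^{\ast})}(v+\lambda,\widehat{\mu})\,e_{(\lambda,0)}(v,\widehat{\mu})$; working it out for $\lambda=\lambda_{n+\alpha}$ forces $e_{(0,\widehat{\lambda}^{\ast})}(\cdot,\widehat{\mu})$ to be quasi-periodic in $v$ with the prescribed automorphy factors, and I would fix the remaining normalization by demanding $e_{(0,\widehat{\lambda}^{\ast})}(0,\widehat{\mu})\equiv 1$, which is precisely property $ii)$. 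The main obstacle is exactly this compatibility: the constant-in-$v$ $M$-direction multipliers cannot be completed by $\widehat{M}$-direction multipliers that are mere exponentials of $\mathbb{C}$-linear functions of $v$ -- that would over-determine the linear functional -- so one is forced to introduce a symmetric cross-term pairing $v$ against $\widehat{\mu}$ (equivalently, to first gauge-transform the $M$-slice multipliers by a suitable coboundary). This cross-term is the analytic incarnation of the fact that $c_{1}(P)$ is the canonical $(1,1)$-pairing between $M$ and $\widehat{M}$, and reconciling its normalization with both $i)$ and $ii)$ simultaneously is where all the bookkeeping lives. This is the construction of \cite[p.328]{GH}, to which I would appeal for the detailed verification.

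For uniqueness, suppose $P$ and $P'$ both satisfy $i)$ and $ii)$ and set $N:=P\otimes (P')^{\ast}$. By $i)$, $N$ restricts to the trivial bundle on every slice $M\times \{\widehat{\mu}\}$; since $M$ is projective, the seesaw principle then yields a line bundle $Q$ on $\widehat{M}$ with $N\cong \pi_{2}^{\ast}Q$. Restricting to $\{0\}\times \widehat{M}$ identifies $N|_{\{0\}\times \widehat{M}}\cong Q$, which is holomorphically trivial by $ii)$; hence $Q$, and therefore $\pi_{2}^{\ast}Q\cong N$, is trivial, giving $P\cong P'$. I expect uniqueness to be routine once existence is in hand, with the seesaw principle doing all the work.
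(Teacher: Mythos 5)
Your proposal is correct; the paper itself gives no proof of this lemma, merely recalling it as the standard fact from \cite[p.328]{GH}, and your sketch --- existence via an explicit factor of automorphy on $V\times\overline{V}^{\,\ast}$ normalized so that properties $i)$ and $ii)$ hold, uniqueness via the seesaw principle applied to $P\otimes(P')^{\ast}$ --- is precisely the standard argument behind that citation. The only soft spot is the heuristic about why the $\widehat{M}$-direction multipliers cannot be exponentials of $\mathbb{C}$-linear functions of $v$ (the honest point is that the natural pairing of $v$ against $\overline{\Lambda}^{\,\ast}\subset\overline{V}^{\,\ast}$ is conjugate-linear, hence not holomorphic in $v$, which is what forces the Hermitian cross-term), but since you defer that verification to the cited construction this does not affect correctness.
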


\section{\protect\bigskip \textbf{A holomorphic line bundle $\widetilde{K}%
\rightarrow M\times M$}}

\hspace*{12pt} From the Poincar\'{e} line bundle $P{\normalsize \rightarrow
M\times }\widehat{{\normalsize M}}$\ and the map $\varphi _{L_{0}}$ $:$ $M%
{\normalsize \rightarrow }\widehat{M}$ as introduced above, we are going to
define a holomorphic line bundle $\widetilde{K}\rightarrow M\times M$. Let $%
M_{1}=M_{2}=M$ and $\pi :V\times V\rightarrow M_{1}\times M_{2}=V/\Lambda
\times V/\Lambda $ with projections $\pi _{i}:M_{1}\times M_{2}\rightarrow
M_{i},\ i=1,2.$ We rewrite the lattice vectors $\lambda _{\alpha }$, $%
\lambda _{n+\alpha }$\ $(\alpha =1,...,n)$ of $M_{1}$ as $\lambda _{\alpha
,0}$, $\lambda _{n+\alpha ,0}$ repectively, and\ analogusly $\lambda _{\beta
}$, $\lambda _{n+\beta }$ as $\lambda _{0,\beta }$, $\lambda _{0,n+\beta }$
for the case of $M_{2}$.

\begin{definition}
Define $\widetilde{K}:=\pi _{1}^{\ast }L_{0}{\normalsize \otimes (Id\times
\varphi _{L_{0}})^{\ast }P\otimes }\pi _{2}^{\ast }L_{0}\rightarrow
M_{1}\times M_{2}$ where $P{\normalsize \rightarrow M\times }\widehat{%
{\normalsize M}}$ is the Poincar\'{e} line bundle.
\end{definition}

\begin{proposition}
\label{Prop4.2}With the notations above, a system of multipliers for $%
\widetilde{K}$ can be%
\begin{equation}
\begin{cases}
e_{\lambda _{\alpha ,0}}(v;\mu )\equiv 1,\hspace*{20pt} & e_{\lambda
_{n+\alpha ,0}}(v;\mu )=e^{-2\pi iz_{\alpha }-2\pi i\mu _{\alpha }-\pi i\tau
_{\alpha \alpha }}\hspace*{100pt} \\ 
e_{\lambda _{0,\beta }}(v;\mu )\equiv 1, & e_{\lambda _{0,n+\beta }}(v;\mu
)=e^{-2\pi iz_{\beta }-2\pi i\mu _{\beta }-\pi i\tau _{\beta \beta }}.%
\end{cases}
\label{4.1}
\end{equation}%
\vspace{4pt}
\end{proposition}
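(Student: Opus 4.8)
The plan is to read the multipliers of $\widetilde{K}$ off the factorization in the Definition, using that the multipliers of a tensor product are the products of the factors' multipliers and that $\pi_1^\ast,\pi_2^\ast$ keep the multipliers in the $M_1$- and $M_2$-lattice directions respectively. The two pulled-back copies of $L_0$ are immediate from (\ref{Multipliers}); the only real content sits in the factor $(\mathrm{Id}\times\varphi_{L_0})^\ast P$, and rather than write an explicit factor of automorphy for the Poincar\'e bundle I would pin the whole candidate system (\ref{4.1}) down by a seesaw argument that uses only the two defining properties of $P$ in Lemma~\ref{Poincare}. Concretely, let $\widetilde{K}'$ denote the line bundle on $M_1\times M_2$ defined by the multipliers (\ref{4.1}); the goal is to prove $\widetilde{K}'\cong\widetilde{K}$.

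First I would check that (\ref{4.1}) really is a system of multipliers, i.e. that it satisfies the consistency relation $e_{\lambda+\lambda'}(v;\mu)=e_{\lambda}((v;\mu)+\lambda')\,e_{\lambda'}(v;\mu)$ on the lattice $\Lambda\times\Lambda$. For pairs of generators lying in a single factor this reduces to the one-variable check behind (\ref{Multipliers}); the only genuinely new relation is the commutation of $\lambda_{n+\alpha,0}$ with $\lambda_{0,n+\beta}$, where the cross terms $e^{-2\pi i\mu_\alpha}$ and $e^{-2\pi i z_\beta}$ produce factors $e^{-2\pi i\tau_{\beta\alpha}}$ and $e^{-2\pi i\tau_{\alpha\beta}}$ that agree precisely because $Z$ is symmetric, see (\ref{period}). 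This is the one computation I would actually carry out, and it is where the symmetry of the period matrix enters.

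Next I would identify $\widetilde{K}'$ with $\widetilde{K}$ by the seesaw principle. Fixing $[\mu]$ and restricting (\ref{4.1}) to $M_1\times\{[\mu]\}$ (freeze $\mu$, vary $v$), the surviving multipliers are $e_{\lambda_{\alpha,0}}\equiv1$ and $e_{\lambda_{n+\alpha,0}}=e^{-2\pi i(z_\alpha+\mu_\alpha)-\pi i\tau_{\alpha\alpha}}$, which are exactly the multipliers (\ref{eqnL}) of $L_\mu=\tau_\mu^\ast L_0$, so $\widetilde{K}'|_{M_1\times\{\mu\}}\cong L_\mu$. On the other hand, from the Definition together with Lemma~\ref{Poincare}(i) and the identification $P_{\varphi_{L_0}([\mu])}\cong\tau_\mu^\ast L_0\otimes L_0^\ast$ recorded just before Lemma~\ref{Poincare}, and noting that $\pi_2^\ast L_0$ restricts trivially on $M_1\times\{\mu\}$, one gets $\widetilde{K}|_{M_1\times\{\mu\}}\cong L_0\otimes(\tau_\mu^\ast L_0\otimes L_0^\ast)=\tau_\mu^\ast L_0=L_\mu$. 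Hence $\mathcal{L}:=\widetilde{K}'\otimes\widetilde{K}^{\ast}$ restricts trivially to every $M_1\times\{[\mu]\}$, so by seesaw $\mathcal{L}\cong\pi_2^\ast N$ for some $N$ on $M_2$. To see $N$ is trivial, restrict to $\{0\}\times M_2$ (freeze $v=0$): there (\ref{4.1}) gives $e_{\lambda_{0,n+\beta}}=e^{-2\pi i\mu_\beta-\pi i\tau_{\beta\beta}}$, i.e. the multipliers (\ref{Multipliers}) of $L_0$, so $\widetilde{K}'|_{\{0\}\times M_2}\cong L_0$; and by Lemma~\ref{Poincare}(ii) the factor $(\mathrm{Id}\times\varphi_{L_0})^\ast P$ is trivial over $\{0\}\times M_2$, whence $\widetilde{K}|_{\{0\}\times M_2}\cong L_0$ as well. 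Therefore $N\cong\mathcal{L}|_{\{0\}\times M_2}$ is trivial, $\mathcal{L}\cong\mathcal{O}$, and $\widetilde{K}'\cong\widetilde{K}$, which is the assertion.

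The main obstacle is exactly the $M_2$-direction (the $\mu$-direction) multiplier $e^{-2\pi i z_\beta}$ carried by $(\mathrm{Id}\times\varphi_{L_0})^\ast P$: it is invisible from any single slice $M_1\times\{\mu\}$, and deriving it head-on would require an explicit factor of automorphy for $P$ together with its transport through the complex-linear map of Property~\ref{Prop 2.3} (via $\widetilde{\varphi}_{L_0}(\lambda_\alpha)=\delta_\alpha\,dy_\alpha^\ast$ and $\widetilde{\varphi}_{L_0}(\lambda_{n+\alpha})=\delta_\alpha\,dy_{n+\alpha}^\ast$), with attendant lattice bookkeeping. The seesaw route sidesteps this by trading the explicit $\mu$-direction computation for the triviality of $P$ over $\{0\}\times\widehat{M}$ in Lemma~\ref{Poincare}(ii); the only price paid is the cocycle verification of the first paragraph.
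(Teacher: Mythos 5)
Your proposal is correct and follows essentially the same route as the paper: write down the candidate system of multipliers, verify the cocycle (compatibility) relations --- where the symmetry of $Z$ is indeed the one nontrivial point --- and then identify the resulting bundle with $\widetilde{K}$ by comparing restrictions to the slices $M_1\times\{[\mu]\}$ and $\{0\}\times M_2$ using the two defining properties of the Poincar\'e bundle in Lemma~\ref{Poincare}. The only cosmetic difference is that the paper isolates the factor $(Id\times\varphi_{L_0})^{\ast}P$ and compares it with an auxiliary bundle $J$ defined by (\ref{4.3}), whereas you run the seesaw comparison on the full tensor product; the underlying argument is the same.
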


\begin{proof}
Recall that a holomorphic line bundle on $M\times M=V/\Lambda \times
V/\Lambda $ is essentially described by a system of multipliers $%
\{e_{\lambda _{\alpha ,0}},e_{\lambda _{n+\alpha ,0}},e_{\lambda _{0,\beta
},}e_{\lambda _{0,n+\beta }}\in \mathcal{O}^{\ast }(V\times V)\}$ satisfying
the compatibility relations : For $i,j=\{1,...,2n\}$ (in (\ref{4.2}) below,
we write $e_{\lambda _{i,0}}(v+\lambda _{\alpha };\mu )$ to mean $e_{\lambda
_{i,0}}(z_{1},...,z_{\alpha }+\delta _{\alpha },...,z_{n};\mu _{1},...,\mu
_{n})$, $e_{\lambda _{i,0}}(v;\mu +\lambda _{n+\beta })$ to mean $e_{\lambda
_{i,0}}(z;\mu _{1}+\tau _{1\beta },\mu _{2}+\tau _{2\beta },...,\mu
_{n}+\tau _{n\beta })$ etc. for notational convenience) 
\begin{equation}
\begin{cases}
e_{\lambda _{i,0}}(v+\lambda _{j};\mu )e_{\lambda _{j,0}}(v;\mu )=e_{\lambda
_{j,0}}(v+\lambda _{i};\mu )e_{\lambda _{i,0}}(v;\mu ) \\ 
e_{\lambda _{i,0}}(v;\mu +\lambda _{j})e_{\lambda _{0,j}}(v;\mu )=e_{\lambda
_{0,j}}(v+\lambda _{i};\mu )e_{\lambda _{i,0}}(v;\mu ) \\ 
e_{\lambda _{0,i}}(v;\mu +\lambda _{j})e_{\lambda _{0,j}}(v;\mu )=e_{\lambda
_{0,j}}(v;\mu +\lambda _{i})e_{\lambda _{0,i}}(v;\mu ).%
\end{cases}%
\hspace{90pt} \vspace*{8pt}  \label{4.2}
\end{equation}

\noindent \hspace*{12pt} Using the multipliers for ${}\pi _{1}^{\ast
}L_{0}:\{e_{\lambda _{\alpha ,0}}(v;\mu )\equiv 1,\ e_{\lambda _{n+\alpha
,0}}(v;\mu )=e^{-2\pi iz_{\alpha }-\pi i\tau _{\alpha \alpha }},\ e_{\lambda
_{0,\beta }}(v;\mu )=\ e_{\lambda _{0,n+\beta }}(v;\mu )\equiv 1,\ \alpha
,\beta =1,...,n\}$(cf. (\ref{Multipliers})) and the similar multipliers for $%
\pi _{2}{}^{\ast }L_{0}$, we claim that a system of multipliers for $%
(Id\times \varphi _{L_{0}})^{\ast }P$ can be chosen to be 
\begin{equation}
\begin{cases}
e_{\lambda _{\alpha ,0}}(v;\mu )\equiv 1,\hspace*{10pt}e_{\lambda _{n+\alpha
,0}}(v;\mu )=e^{-2\pi i\mu _{\alpha }} \\ 
e_{\lambda _{0,\beta }}(v;\mu )\equiv 1,\hspace*{10pt}e_{\lambda _{0,n+\beta
}}(v;\mu )=e^{-2\pi iz_{\beta }}%
\end{cases}%
\alpha ,\beta =1,...,n.  \label{4.3}
\end{equation}%
\newline
\noindent Since (\ref{4.3}) satisfies (\ref{4.2}), it defines a holomorphic
line bundle $J$ on $M_{1}\times M_{2}$. To prove that $J\cong (Id\times
\varphi _{L_{0}})^{\ast }P$ the idea is to compare $i)$ the system of
multipliers for $(Id\times \varphi _{L_{0}})^{\ast }P|_{M\times \{\mu
\}}\cong \tau _{\mu }{}^{\ast }L_{0}\otimes L_{0}^{\ast }\cong P_{\varphi
_{L_{0}}(\mu )}\rightarrow M_{1}\times \{\mu \}$ given by $e_{\lambda
_{\alpha ,0}}(v)\equiv 1,\hspace*{4pt}e_{\lambda _{n+\alpha ,0}}(v)=e^{-2\pi
i\mu _{\alpha }}$ $(\alpha =1,...,n)$ with the system of multipliers for $%
J|_{M\times \{\mu \}}$, and\hspace*{4pt} $ii)$ the multipliers for the
trivial line bundle $(Id\times \varphi _{L_{0}})^{\ast }P|_{\{0\}\times M}$
by $e_{\lambda _{\alpha }}(v)=e_{\lambda _{n+\alpha ,0}}(v)\equiv 1$ ($%
\alpha =1,...,n)$ with the ones for $J|_{\{0\}\times M}$. The argument for
verifying $i)$, $ii)$ here is similar to \cite[Prop. 4.2]{CCT}.
\end{proof}

\noindent \hspace*{12pt} In the notation of Lemma \ref{Lemma 2.2} $\theta
_{m}(v;\mu )=\theta _{m}(v+\mu )$\ (with $m\in \mathfrak{M}$) and the metric 
$h(v;\mu )=$ $h_{L_{0}}(v+\mu )$ are considered as functions in the joint
variables $(v,\mu )\in V\times V$. From Proposition \ref{Prop4.2} and the
quasi-periodic properties of $\theta _{m}(\xi )$ and $h_{L_{0}}(\xi )$ it
holds the following global property of the family of functions $\{\theta
_{m}(v;\mu )\}_{m\in \mathfrak{M}}$ (cf. \cite[Prop. 4.3]{CCT}): \bigskip 

\begin{proposition}
\label{Prop4.3}For the above holomorphic line bundle $\widetilde{K}%
\rightarrow M_{1}\times M_{2}$, the functions $\theta _{m}(v;\mu )$, $m\in 
\mathfrak{M}$ just defined serves as a basis of holomorphic sections of $%
\widetilde{K}$, and $h(v;\mu )$ as a metric on $\widetilde{K}$, induces the
metric $h_{L_{\mu }}$ in Lemma \ref{Lemma 2.2} on the restriction $%
\widetilde{K}|_{M\times \left\{ \mu \right\} }$.
\end{proposition}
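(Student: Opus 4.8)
The plan is to reduce the entire statement to the quasi-periodicity properties already recorded in Lemma~\ref{Lemma 2.1}, exploiting the fact that both $\theta_m(v;\mu)=\theta_m(v+\mu)$ and $h(v;\mu)=h_{L_0}(v+\mu)$ depend on $(v,\mu)$ only through the single combination $\xi:=v+\mu\in V$. Recall that a holomorphic section of a line bundle presented by multipliers is precisely a holomorphic function on the universal cover transforming by those multipliers, while a Hermitian metric is a positive function transforming by the inverse squared moduli; thus the whole assertion amounts to matching the transformation laws of $\theta_m(v;\mu)$ and $h(v;\mu)$ against the multipliers (\ref{4.1}) of Proposition~\ref{Prop4.2}.

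First I would verify that each $\theta_m(v;\mu)$ is a holomorphic section of $\widetilde{K}$. Shifting $v$ by $\lambda_{\alpha,0}$ (resp. $\mu$ by $\lambda_{0,\beta}$) shifts $\xi_\alpha$ by $\delta_\alpha$, under which $\theta_m$ is invariant by the first line of (\ref{2.3}), matching $e_{\lambda_{\alpha,0}}\equiv e_{\lambda_{0,\beta}}\equiv 1$. Shifting $v$ by $\lambda_{n+\alpha,0}$ (resp. $\mu$ by $\lambda_{0,n+\beta}$) shifts $\xi$ by $\lambda_{n+\alpha}=\sum_k\tau_{\alpha k}v_k$, under which the second line of (\ref{2.3}) produces exactly the factor $e^{-2\pi i(z_\alpha+\mu_\alpha)-\pi i\tau_{\alpha\alpha}}$, matching the remaining multipliers in (\ref{4.1}). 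The metric claim is the same computation with $|\cdot|^{-2}$ in place of the multiplier, using (\ref{2.4}) in place of (\ref{2.3}); and restricting to fixed $\mu$ one reads off from (\ref{4.1}) exactly the multipliers (\ref{eqnL}) of $L_\mu$, while $h(v;\mu)|_{\mu=\mathrm{const}}=h_{L_0}(v+\mu)$ is by definition the metric $h_{L_\mu}$ of Lemma~\ref{Lemma 2.2}.

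The one point that is not a direct transcription is that the $\theta_m(v;\mu)$ form a \emph{basis}, i.e. that there are no further global sections. Linear independence follows from Lemma~\ref{Lemma5.2} upon restricting to a single fibre $M\times\{\mu\}$. For spanning, I would take an arbitrary global holomorphic section $\Theta$ of $\widetilde{K}$; restricting to each fibre and invoking Lemma~\ref{Lemma 2.2} gives an expansion $\Theta(v;\mu)=\sum_{m}c_m(\mu)\,\theta_m(v;\mu)$ with coefficients $c_m$ depending holomorphically on $\mu$ (the basis $\theta_m(\cdot;\mu)$ varying holomorphically and non-degenerately). Applying the $\mu$-quasi-periodicity, namely the $\lambda_{0,\beta}$ and $\lambda_{0,n+\beta}$ multipliers of (\ref{4.1}), to both $\Theta$ and each $\theta_m$, and using the linear independence of $\{\theta_m(\cdot;\mu)\}_m$ as functions of $v$, the multiplier factors cancel on both sides and force $c_m(\mu+\lambda)=c_m(\mu)$ for every $\lambda\in\Lambda$. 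Hence each $c_m$ descends to a holomorphic function on the compact manifold $M$ and is therefore constant, so $\Theta$ lies in the span of the $\theta_m$.

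The main obstacle is precisely this spanning step: the section- and metric-transformation checks are bookkeeping with $\xi=v+\mu$, whereas ruling out extra global sections requires the coefficient-periodicity argument above, together with the linear independence deferred to Lemma~\ref{Lemma5.2}. This is the step that genuinely uses compactness of $M$ and the exact cancellation of the $\mu$-multipliers, and it runs parallel to the one-dimensional treatment in \cite[Prop. 4.3]{CCT}.
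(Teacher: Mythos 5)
Your proposal is correct and follows essentially the same route as the paper, which simply observes that the quasi-periodicity of $\theta_m(\xi)$ and $h_{L_0}(\xi)$ in the single variable $\xi=v+\mu$ matches the multipliers of Proposition \ref{Prop4.2} and defers the details to the one-dimensional case in \cite{CCT}. Your additional spanning argument (periodicity of the coefficients $c_m(\mu)$ forced by the $m$-independent $\mu$-multipliers, plus compactness of $M$) correctly supplies the one step the paper leaves implicit.
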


\noindent \hspace*{12pt} From the metrics $h(v;\mu )$ on $\widetilde{K}$, $%
h_{\pi _{1}^{\ast }L_{0}}(v;\mu )=e^{-2\pi \overset{n}{\underset{\alpha
,\beta =1}{\sum }}{W_{\alpha \beta }}\func{Im}(z_{\alpha })\func{Im}%
(z_{\beta })}$on $\pi _{1}^{\ast }L_{0}$ and $h_{\pi _{2}^{\ast
}L_{0}}(v;\mu )=e^{-2\pi \overset{n}{\underset{\alpha ,\beta =1}{\sum }}{%
W_{\alpha \beta }}\func{Im}(\mu _{\alpha })\func{Im}(\mu _{\beta })}$on $\pi
_{2}^{\ast }L_{0}$ (cf.\ $h_{L_{0}}(v)$ in Lemma \ref{Lemma 2.2}), we can
equip the line bundle $(Id\times \varphi _{L_{0}})^{\ast }P=(\pi _{1}^{\ast
}L_{0})^{\ast }\otimes \widetilde{K}\otimes (\pi _{2}^{\ast }L_{0})^{\ast }$
with an induced metric :%
\begin{equation}
h_{(Id\times \varphi _{L_{0}})^{\ast }P}(v;\mu )=(h_{\pi _{1}^{\ast
}L_{0}}(v;\mu ))^{-1}\text{ }h(v;\mu )\text{ }(h_{\pi _{2}^{\ast
}L_{0}}(v;\mu ))^{-1}=e^{-4\pi \overset{n}{\underset{\alpha ,\beta =1}{\sum }%
}{W_{\alpha \beta }}\func{Im}z_{\alpha }\func{Im}\mu _{\beta }}.\hspace*{20pt%
}  \label{hP}
\end{equation}%
\bigskip

\noindent \hspace*{12pt} For any metric $h$, we can calculate its curvature $%
\Theta $ by $\Theta =-\partial \overline{\partial }\log h$:

\begin{proposition}
\label{Prop 3.4} The curvature of the metric in $(\ref{hP})$ is%
\begin{equation}  \label{CuvatureP}
\Theta _{(Id\times \varphi _{L_{0}})^{\ast }P}(v;\mu )=\pi \overset{n}{%
\underset{\alpha ,\beta =1}{\sum }}{W_{\alpha \beta }}(dz_{\alpha }\wedge d%
\overline{\mu }_{\beta }+d\mu _{\alpha }\wedge d\overline{z}_{\beta })\text{
\ \ \ \ on }M_{1}\times M_{2}.\hspace*{80pt}
\end{equation}

\begin{proof}
By writing 
\begin{equation*}
h_{(Id\times \varphi _{L_{0}})^{\ast }P}(v;\mu )=e^{-4\pi \overset{n}{%
\underset{\alpha ,\beta =1}{\sum }}{W_{\alpha \beta }}\func{Im}(z_{\alpha })%
\func{Im}(\mu _{\beta })}=e^{\pi \overset{n}{\underset{\alpha ,\beta =1}{%
\sum }}{W}_{\alpha \beta }(z_{\alpha }-\overline{z}_{\alpha })(\mu _{\beta }-%
\overline{\mu }_{\beta })},
\end{equation*}

a straightfoward caculation gives%
\begin{eqnarray*}
\Theta _{(Id\times \varphi _{L_{0}})^{\ast }P}(v;\mu ) &=&-\partial 
\overline{\partial }\, \Big(\pi \overset{n}{\underset{\alpha ,\beta =1}{\sum 
}}{W}_{\alpha \beta }(z_{\alpha }-\overline{z}_{\alpha })(\mu _{\beta }-%
\overline{\mu }_{\beta })\Big) \\
&=&\pi \overset{n}{\underset{\alpha ,\beta =1}{\sum }}{W_{\alpha \beta }}%
(dz_{\alpha }\wedge d\overline{\mu }_{\beta }+d\mu _{\beta }\wedge d%
\overline{z}_{\alpha })=\pi \overset{n}{\underset{\alpha ,\beta =1}{\sum }}{%
W_{\alpha \beta }}(dz_{\alpha }\wedge d\overline{\mu }_{\beta }+d\mu
_{\alpha }\wedge d\overline{z}_{\beta })
\end{eqnarray*}

since $W=({W_{\alpha \beta }})$ is symmetric.
\end{proof}
\end{proposition}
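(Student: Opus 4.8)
The plan is to apply the standard formula $\Theta = -\partial\bar{\partial}\log h$ directly to the metric $h_{(Id\times \varphi_{L_0})^{\ast}P}$ of $(\ref{hP})$, after first rewriting its exponent in a form adapted to the complex coordinates $z_\alpha$ (on $M_1$) and $\mu_\beta$ (on $M_2$). Since $\func{Im}z_\alpha = (z_\alpha - \bar{z}_\alpha)/(2i)$ and $\func{Im}\mu_\beta = (\mu_\beta - \bar{\mu}_\beta)/(2i)$, the product $\func{Im}z_\alpha\,\func{Im}\mu_\beta$ equals $-\tfrac14(z_\alpha - \bar{z}_\alpha)(\mu_\beta - \bar{\mu}_\beta)$, so that $\log h = \pi\sum_{\alpha,\beta}W_{\alpha\beta}(z_\alpha - \bar{z}_\alpha)(\mu_\beta - \bar{\mu}_\beta)$. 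This is exactly the reformulation recorded in the statement, and it isolates the dependence on the two factors: each summand is a product of a term depending only on the $M_1$-coordinates and a term depending only on the $M_2$-coordinates.

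Next I would differentiate. Computing $\bar{\partial}\log h$ replaces $(z_\alpha - \bar{z}_\alpha)$ by $-d\bar{z}_\alpha$ and $(\mu_\beta - \bar{\mu}_\beta)$ by $-d\bar{\mu}_\beta$; then applying $\partial$ replaces the remaining holomorphic factor by its differential, $dz_\alpha$ or $d\mu_\beta$. The essential point is that, because each index variable enters $\log h$ linearly and is multiplied only by a factor from the \emph{other} factor, the would-be pure terms $dz_\alpha\wedge d\bar{z}_\beta$ and $d\mu_\alpha\wedge d\bar{\mu}_\beta$ never appear: only the mixed wedges $dz_\alpha\wedge d\bar{\mu}_\beta$ and $d\mu_\beta\wedge d\bar{z}_\alpha$ survive. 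Collecting the signs from the two successive passes of $\partial$ and $\bar{\partial}$ gives $\Theta = \pi\sum_{\alpha,\beta}W_{\alpha\beta}(dz_\alpha\wedge d\bar{\mu}_\beta + d\mu_\beta\wedge d\bar{z}_\alpha)$.

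Finally I would invoke the symmetry $W_{\alpha\beta}=W_{\beta\alpha}$, valid because $W = (\func{Im}Z)^{-1}$ and $Z$ is symmetric, to reindex the second sum: relabelling $\alpha\leftrightarrow\beta$ turns $\sum_{\alpha,\beta}W_{\alpha\beta}\,d\mu_\beta\wedge d\bar{z}_\alpha$ into $\sum_{\alpha,\beta}W_{\alpha\beta}\,d\mu_\alpha\wedge d\bar{z}_\beta$, which yields the symmetric expression $(\ref{CuvatureP})$. The computation is routine; the only place demanding genuine care is the bookkeeping of holomorphic versus antiholomorphic differentials across the two distinct factors $M_1$ and $M_2$ together with the attendant signs. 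Accordingly I would organise the argument so as to make transparent \emph{why} no pure $(1,1)$-term within a single factor can arise, since that observation is what reduces the whole calculation to tracking the four mixed terms above.
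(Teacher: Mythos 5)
Your proposal is correct and follows exactly the paper's own argument: rewrite the exponent of $h_{(Id\times \varphi _{L_{0}})^{\ast }P}$ as $\pi \sum_{\alpha,\beta} W_{\alpha\beta}(z_{\alpha}-\overline{z}_{\alpha})(\mu_{\beta}-\overline{\mu}_{\beta})$, apply $-\partial\overline{\partial}$ to obtain only the mixed terms, and use the symmetry of $W$ to reindex. The sign bookkeeping you describe matches the paper's computation.
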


\section{\textbf{Direct image bundle $K\rightarrow M_{2}$ with an }$L^{2}$%
\textbf{-metric}}

\noindent \hspace*{12pt} With the projections $\pi _{i}:M_{1}\times
M_{2}\rightarrow M_{i},i=1,2$, the direct image bundle $K$ for $\widetilde{K}
$ in Section 3 is given as follows (by projection formula on the third
factor of $\widetilde{K}$) 
\begin{equation}
K:=\pi _{2}{}_{\ast }\widetilde{K}=\pi _{2}{}_{\ast }(\pi _{1}^{\ast }L_{0}%
{\normalsize \otimes (Id\times \varphi _{L_{0}})^{\ast }P\otimes }\pi
_{2}^{\ast }L_{0})\cong \pi _{2}{}_{\ast }(\pi _{1}^{\ast }L_{0}{\normalsize %
\otimes (Id\times \varphi _{L_{0}})^{\ast }P)\otimes }L_{0}\rightarrow M_{2}.
\label{Kdirect}
\end{equation}

\begin{definition}
\label{Def5.1} Define a meric $($ $\ ,\ \ )_{h}$ on $K$ by the inner product
using $($ $\ ,\ \ )_{{h_{L_{\mu }}}}$on $K|_{\mu }=H^{0}(M,\widetilde{K}%
|_{M\times \{\mu \}}) : $ 
\begin{equation}
(\, \theta _{m}(v),\, \theta _{m^{\prime }}(v)\, )_{h_{L_{\mu}}}:=\int_{M} {%
h_{L_{\mu }}}(v)\, \theta _{m}(v)\, \overline{ \theta _{m^{\prime }}(v)}\, (%
\frac{i}{2})^{n} dz_{1} \wedge d\overline{z}_{1} \wedge ....\wedge dz_{n}
\wedge d\overline{z}_{n}.
\end{equation}
\end{definition}

\bigskip

\noindent \hspace*{12pt} The main lemma for our computations is as follows.

\begin{lemma}
\label{Lemma5.2} With the inner product $($ $\ ,\ \ )_{{h_{L_{\mu }}}},$ the
basis $\{\theta _{m}(v;\mu )\}_{m \in \mathfrak{M}}$ in Lemma \ref{Lemma 2.2}
consititute an orthogonal basis of $H^{0}(M,L_{\mu })$, with%
\begin{equation*}
(\,\theta _{m}(v;\mu ),\,\theta _{m}(v;\mu ))_{h_{L_{\mu }}}=\sqrt{\det (%
\frac{\func{Im}\tau _{\alpha \beta }}{2})}\left(\overset{n}{\underset{%
\alpha=1}{\Pi}}\delta_{\alpha} \right) e^{2\pi \overset{n}{\underset{\alpha
,\beta =1}{\sum }}\func{Im}\tau {_{\alpha \beta }}\frac{m_{\alpha }m_{\beta}%
}{\delta _{\alpha } \delta _{\beta }}}.
\end{equation*}
\end{lemma}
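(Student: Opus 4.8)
The plan is to reduce to $\mu=0$, pass to real coordinates, and then separate the integration into the ``real'' directions $x_1,\dots,x_n$ (which produce orthogonality) and the ``imaginary'' directions $x_{n+1},\dots,x_{2n}$ (which produce a Gaussian integral). First, since $\theta_m(v;\mu)=\theta_m(v+\mu)$, $h_{L_\mu}(v)=h_{L_0}(v+\mu)$, and the integrand $h_{L_0}|\theta|^2$ descends to a function on $M$, the substitution $w=v+\mu$ together with the translation-invariance of the volume form gives $(\theta_m(v;\mu),\theta_{m'}(v;\mu))_{h_{L_\mu}}=(\theta_m(v),\theta_{m'}(v))_{h_{L_0}}$, so it suffices to treat $\mu=0$ (consistent with the $\mu$-independence of the claimed value). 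Next I would rewrite the measure via $(\ref{complaxreal})$: from $\frac{i}{2}dz_\alpha\wedge d\overline z_\alpha=d(\func{Re}z_\alpha)\wedge d(\func{Im}z_\alpha)$ and the linear relation between $dz_\alpha,d\overline z_\alpha$ and the $dx_i$, a block-determinant computation yields $(\tfrac{i}{2})^n\,dz_1\wedge d\overline z_1\wedge\cdots\wedge dz_n\wedge d\overline z_n=\big(\prod_\alpha\delta_\alpha\big)\det(\func{Im}Z)\,dx_1\cdots dx_{2n}$, so the integral is taken over the unit cube $[0,1)^{2n}$.

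The second step is orthogonality and the collapse of the double series. Expanding $\theta_m$ and $\overline{\theta_{m'}}$ as in $(\ref{2.5})$, the only dependence of the $(k,k')$-term on $x_1,\dots,x_n$ is the phase $\exp\!\big(2\pi i\sum_\alpha(\ell_\alpha-\ell'_\alpha)x_\alpha\big)$ with $\ell_\alpha=k_\alpha\delta_\alpha+m_\alpha$ and $\ell'_\alpha=k'_\alpha\delta_\alpha+m'_\alpha$, since both $h_{L_0}$ and the factor $\func{Im}z_\alpha$ depend only on $x_{n+1},\dots,x_{2n}$. Integrating over $x_1,\dots,x_n\in[0,1)^n$ produces $\prod_\alpha\delta_{\ell_\alpha,\ell'_\alpha}$; because $0\le m_\alpha,m'_\alpha<\delta_\alpha$, the division algorithm forces $m=m'$ and $k=k'$. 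This at once proves orthogonality (hence the deferred linear independence of $\{\theta_m\}_m$) and reduces $(\theta_m,\theta_m)$ to a single sum over $k\in\mathbb{Z}^n$.

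On the diagonal I would substitute $y_\alpha=\func{Im}z_\alpha=\sum_\beta(\func{Im}\tau_{\alpha\beta})x_{n+\beta}$, so $x_{n+1},\dots,x_{2n}\in[0,1)^n$ maps to a fundamental domain $D$ for the lattice $(\func{Im}Z)\mathbb{Z}^n$ and the measure becomes $\big(\prod_\alpha\delta_\alpha\big)\,dy$. The $k$-th term then reads $|C_{m,k}|^2\exp\!\big(-4\pi\sum_\alpha\tfrac{\ell_\alpha}{\delta_\alpha}y_\alpha-2\pi\,y^{T}Wy\big)$, where $C_{m,k}$ is the $z$-independent constant factor in $(\ref{2.5})$. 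The heart of the argument is the \emph{unfolding}: setting $\tilde y=y+(\func{Im}Z)k$, so that $\bigcup_k\big(D+(\func{Im}Z)k\big)=\mathbb{R}^n$ disjointly, and completing the square, one checks that every term involving $k$ cancels identically; this relies crucially on $W=(\func{Im}Z)^{-1}$, which is exactly what kills the $k$-quadratic, the $k$--$y$ cross term, and the $k$--$m$ cross term. Consequently the sum-over-$k$-plus-integral-over-$D$ becomes a single Gaussian integral $\big(\prod_\alpha\delta_\alpha\big)\int_{\mathbb{R}^n}\exp\!\big(-2\pi\,\tilde y^{T}W\tilde y-4\pi\,c^{T}\tilde y\big)\,d\tilde y$ with $c=(m_\alpha/\delta_\alpha)_\alpha$.

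Finally, completing the square once more in $\tilde y$ and using $\int_{\mathbb{R}^n}e^{-2\pi\,s^{T}Ws}\,ds=\sqrt{\det\!\big(\tfrac{\func{Im}Z}{2}\big)}$ (from $\det W=1/\det(\func{Im}Z)$) yields the factor $\sqrt{\det\!\big(\tfrac{\func{Im}\tau_{\alpha\beta}}{2}\big)}$, while the leftover exponent $2\pi\,c^{T}(\func{Im}Z)c=2\pi\sum_{\alpha,\beta}\func{Im}\tau_{\alpha\beta}\,\tfrac{m_\alpha m_\beta}{\delta_\alpha\delta_\beta}$ supplies the stated exponential; together with the prefactor $\prod_\alpha\delta_\alpha$ this is precisely the claimed value. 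The main obstacle is the unfolding/cancellation step, where the specific form $W=(\func{Im}Z)^{-1}$ and the $\mathfrak{M}$-indexing conspire to make all $k$-dependence disappear; absolute convergence, guaranteed by the positive-definiteness of $\func{Im}Z$, justifies the term-by-term integration and the rearrangement throughout.
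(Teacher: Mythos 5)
Your proof is correct and follows essentially the same route as the paper's: expand the theta series, use the integration over the ``real'' directions together with the constraint $0\le m_\alpha,m'_\alpha<\delta_\alpha$ to force $m=m'$ and $k=k'$, then unfold the remaining $k$-sum into a single Gaussian integral over $\mathbb{R}^{n}$ (this is exactly the paper's Sublemma \ref{sublem}) and evaluate it. Your initial reduction to $\mu=0$ by translation invariance of the Haar measure is a minor streamlining of the paper's computation, which instead carries $\mu$ through the completion of the square and watches it cancel there.
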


\begin{proof}
Write $z_{\alpha }=z_{\alpha 1}+i\, z_{\alpha2 }$, $\tau_{\alpha
\beta}=\tau_{\alpha \beta 1}+ i \tau_{\alpha \beta 2}$ $(\, z_{\alpha 1},
z_{\alpha 2}, \tau_{\alpha \beta 1}, \tau_{\alpha \beta 2}\in\mathbb{R}\, )$
and $dvol = dz_{11}dz_{12}...dz_{n1}dz_{n2}$. We have, via Definition 4.1
and Lemma 2.2, for $m, m^{\prime }\in \mathfrak{M}$,

\begin{equation}  \label{Immte}
I_{m m^{\prime }}:=(\,\theta _{m}(v;\mu ),\,\theta _{m^{\prime }}(v;\mu
))_{h_{L_{\mu }}}=\int_{M}h_{L_{\mu}}(v)\,\theta _{m}(v)\,\overline{\theta
_{m^{\prime }}(v)}\ dvol \hspace*{140pt}\vspace*{16pt}
\end{equation}
\begin{align*}
=\int_{M}e^{-2\pi \overset{n}{\underset{\alpha ,\beta =1}{\sum }}{W}_{\alpha
\beta }\left( z_{\alpha 2}+\mu_{\alpha 2} \right) \left( z_{\beta 2}
+\mu_{\beta 2}\right)}\,\sum\limits_{k,j\in \mathbb{Z}^{n}}\Big( e^{\pi i{%
\overset{n}{\underset{\alpha ,\beta =1}{\sum } }}k_{\alpha }k_{\beta }\tau
_{\alpha \beta }}e^{2\pi i{\overset{n}{\underset{\alpha ,\beta =1}{\sum }}}%
\tau _{\alpha \beta }\frac{m_{\alpha }}{\delta _{\alpha }}k_{\beta }}e^{2\pi
i{\overset{n}{\underset{\alpha =1}{\sum }}} \frac{(k_{\alpha }\delta
_{\alpha }+m_{\alpha })}{\delta _{\alpha }}(z_{\alpha }+\mu_{\alpha})}\, %
\Big) \\
\hspace*{40pt}\Big( e^{-\pi i{\overset{n}{\underset{\alpha ,\beta =1}{ \sum }%
}}j_{\alpha }j_{\beta }\overline{\tau} _{\alpha \beta }}e^{-2\pi i{\overset{n%
}{\underset{\alpha ,\beta =1}{\sum }}}\overline{\tau }_{\alpha \beta }\frac{%
m_{\alpha }^{\prime }}{\delta _{\alpha }}j_{\beta }}e^{-2\pi i{\overset{n}{%
\underset{\alpha =1}{\sum }}}\frac{(j_{\alpha }\delta _{\alpha }+m_{\alpha
}^{\prime })}{\delta _{\alpha }}(\overline{z}_{\alpha }+ \overline{\mu}%
_{\alpha})}\, \Big)\ dvol
\end{align*}
\begin{align*}
=\int_{M}e^{-2\pi \overset{n}{\underset{\alpha ,\beta =1}{\sum }}{W}_{\alpha
\beta } \left( z_{\alpha 2}+\mu_{\alpha 2} \right) \left( z_{\beta 2} +
\mu_{\beta 2}\right)}\hspace*{320pt} \\
\hspace*{40pt}\cdot \underset{j,k \in \mathbb{Z}^{n}}{\sum} e^{\pi i 
\underset{\alpha , \beta =1}{\overset{n}{\sum}} (k_{\alpha} k_{\beta}
\tau_{\alpha \beta}-j_{\alpha}j_{\beta} \overline{\tau}_{\alpha \beta})}e^{2
\pi i \overset{n}{\underset{\alpha ,\beta=1}{\sum}}(k_{\alpha} \frac{%
m_{\beta}}{\delta_{\beta}}\tau_{\alpha \beta}-j_{\alpha} \frac{m^{\prime
}_{\beta}}{\delta_{\beta}} \overline{\tau}_{\alpha \beta})}e^{2 \pi i\overset%
{n}{\underset{\alpha=1}{\sum}}\big[ (k_{\alpha} + \frac{m_{\alpha}}{\delta}%
)(z_{\alpha}+\mu_{\alpha})-(j_{\alpha}+\frac{m^{\prime }_{\alpha}}{%
\delta_{\alpha}})(\overline{z}_{\alpha}+\overline{\mu}_{\alpha})\big]}\ dvol
\end{align*}

\noindent We first show that $I_{mm^{\prime }}=0$ if $m\neq m^{\prime }$.
\noindent With the change of variables $z_{\alpha 1}=\delta _{\alpha
}\,t_{\alpha }+\overset{n}{\underset{l=1}{\sum }}\,\tau _{\alpha l1}\,t_{n+l}
$, $z_{\alpha 2}=\overset{n}{\underset{l=1}{\sum }}\tau _{\alpha l2}\,t_{n+l}
$,\ $\alpha =1,...,n$, we can change the domain of integration from $M$ to
the $2n$-dimensional unit cube $I^{2n}$:\newline

\begin{eqnarray}
&&I_{mm^{\prime }} =\int_{I^{2n}}e^{-2\pi \overset{n}{\underset{\alpha
,\beta =1}{\sum }}{W}_{\alpha \beta }\left( (\overset{n}{\underset{l=1}{\sum 
}}\tau _{\alpha l2}t_{n+\alpha })+\mu _{\alpha 2}\right) \left( (\overset{n}{%
\underset{l=1}{\sum }}\tau _{\alpha l2}t_{n+\beta })+\mu _{\beta 2}\right) }%
\hspace*{160pt}  \label{Imm'} \\
&&\hspace*{45pt}\cdot \underset{j,k\in \mathbb{Z}^{n}}{\sum }e^{\pi i%
\underset{\alpha ,\beta =1}{\overset{n}{\sum }}(k_{\alpha }k_{\beta }\tau
_{\alpha \beta }-j_{\alpha }j_{\beta }\overline{\tau }_{\alpha \beta
})}\cdot \ e^{2\pi i\overset{n}{\underset{\alpha ,\beta =1}{\sum }}%
(k_{\alpha }\frac{m_{\beta }}{\delta _{\beta }}\tau _{\alpha \beta
}-j_{\alpha }\frac{m_{\beta }^{\prime }}{\delta _{\beta }}\overline{\tau }%
_{\alpha \beta })}e^{2\pi i\overset{n}{\underset{\alpha =1}{\sum }}\big[%
(k_{\alpha }+\frac{m_{\alpha }}{\delta _{\alpha }}-j_{\alpha }-\frac{%
m_{\alpha }^{\prime }}{\delta _{\alpha }})(\delta _{\alpha }t_{\alpha })\big]%
}  \notag \\
&&\hspace*{170pt}\cdot \ e^{\pi i\big[(k_{\alpha }+\frac{m_{\alpha }}{\delta
_{\alpha }})(\overset{n}{\underset{l=1}{\sum }}\tau _{\alpha l}t_{n+l}+\mu
_{\alpha })-(j_{\alpha }+\frac{m_{\alpha }^{\prime }}{\delta _{\alpha }})(%
\overset{n}{\underset{l=1}{\sum }}\overline{\tau }_{\alpha l}t_{n+l}+%
\overline{\mu }_{\alpha })\big]}\ \mathcal{J}\ dt_{1}...dt_{2n}  \notag
\end{eqnarray}%
\noindent where $\mathcal{J}=\Big(\overset{n}{\underset{\alpha =1}{\Pi }}%
\delta _{\alpha }\Big)\det (\tau _{\alpha \beta 2})$ is the Jacobian.%
\vspace*{8pt}\newline
\noindent \hspace*{12pt} Checking the terms in the integrand related to $%
t_{1},...,t_{n}$ we have the third term in the second line above:%
\begin{equation}
\sum\limits_{k,j\in \mathbb{Z}^{n}}\int_{I^{n}}e^{\ \overset{n}{\underset{%
\alpha =1}{\sum }}2\pi i(k_{\alpha }\delta _{\alpha }+m_{\alpha }-j_{\alpha
}\delta _{\alpha }-m_{\alpha }^{\prime })\,t_{\alpha }}\ dt_{1}...dt_{n}.%
\hspace*{100pt}
\end{equation}%
\noindent Now $0\leq m_{\alpha },m_{\alpha }^{\prime }<\delta _{\alpha }$
and $m_{\alpha },m_{\alpha }^{\prime }\in 0\cup \mathbb{N}$. If $j\neq k$
then $|(k_{\alpha }-j_{\alpha })\delta _{\alpha }|\geq \delta _{\alpha }$
for some $\alpha =1,...,n$, $k_{\alpha }\delta _{\alpha }+m_{\alpha
}-j_{\alpha }\delta _{\alpha }-m_{\alpha }^{\prime }\in \mathbb{Z}\setminus
\{0\}$ since $|m_{\alpha }-m_{\alpha }^{\prime }|<\delta _{\alpha }$, and
so, for this $\alpha $ 
\begin{equation}
\int_{I}e^{\ 2\pi i(k_{\alpha }\delta _{\alpha }+m_{\alpha }-j_{\alpha
}\delta _{\alpha }-m_{\alpha }^{\prime })\,t_{\alpha }}\ dt_{\alpha }=0.%
\hspace*{100pt}  \label{4.5}
\end{equation}%
It remains to consider the $j=k$ terms. We have, for all $k\in \mathbb{Z}%
^{n} $ 
\begin{equation}
\int_{I}e^{\ 2\pi i(k_{\alpha }\delta _{\alpha }+m_{\alpha }-j_{\alpha
}\delta _{\alpha }-m_{\alpha }^{\prime })\,t_{\alpha }}\ dt_{\alpha
}=\int_{I}e^{\ 2\pi i(m_{\alpha }-m_{\alpha }^{\prime })\,t_{\alpha }}\
dt_{\alpha }.\hspace*{50pt}  \label{j=k}
\end{equation}%
By using $(\ref{j=k})$ for $(\ref{Imm'})$ one sees that the integral $%
I_{mm^{\prime }}$ vanishes if $m\neq m^{\prime }$, proving the orthogonal
property for $\{\theta _{m}(v;\mu ))\}_{m\in \mathfrak{M}}$.\newline

\noindent \hspace*{12pt} The linear independence of $\{\theta _{m}(v;\mu
)\}_{m\in \mathfrak{M}}$ in $H^{0}(M, L _{\mu})$ as needed in the proof of
Lemma \ref{Lemma 2.1} follows from the above orthogonal property.\newline

\noindent For $m=m^{\prime }\in \mathfrak{M}$ we calculate, using $(\ref%
{Immte})$ again and setting $m=m^{\prime }$ in the third equality below 
\begin{eqnarray}  \label{Imteg}
I_{m} :=(\,\theta _{m}(v;\mu ),\,\theta _{m}(v;\mu ))_{h_{L_{\mu
}}}=\int_{M}h_{L_{\mu }}(v)\,\theta _{m}(v;\mu )\,\overline{\theta
_{m}(v;\mu )}\, dvol\hspace*{110pt}  \notag \\
=\int_{M}e^{-2\pi \overset{n}{\underset{\alpha ,\beta =1}{\sum }}{W_{\alpha
\beta }}(z_{\alpha 2}+\mu _{\alpha 2})(z_{\beta 2}+\mu _{\beta 2}) }\hspace*{%
246pt}  \notag \\
\cdot \sum\limits_{k,j\in \mathbb{Z}^{n}}\Big( e^{\pi i{\overset{n}{\underset%
{\alpha ,\beta =1}{\sum }}}k_{\alpha }k_{\beta }\tau _{\alpha \beta
}}e^{2\pi i{\overset{n}{\underset{\alpha ,\beta =1}{\sum }}}\tau _{\alpha
\beta }\frac{m_{\alpha }}{\delta _{\alpha }}k_{\beta }}e^{2\pi i{\overset{n}{%
\underset{\alpha =1}{\sum }}}\frac{(k_{\alpha }\delta _{\alpha }+m_{\alpha })%
}{\delta _{\alpha }}(z_{\alpha }+\mu _{\alpha })}\, \Big) \hspace*{80pt} 
\notag \\
\cdot \Big( e^{-\pi i{\overset{n}{\underset{\alpha ,\beta =1}{\sum }}}%
j_{\alpha }j_{\beta }\overline{\tau}_{\alpha \beta }}e^{-2\pi i{\overset{n}{%
\underset{\alpha ,\beta =1}{\sum }}}\overline{\tau} _{\alpha \beta }\frac{%
m_{\alpha }^{\prime }}{\delta _{\alpha }}j_{\beta }}e^{-2\pi i{\overset{n}{%
\underset{\alpha =1}{\sum }}}\frac{(j_{\alpha }\delta _{\alpha}+m_{\alpha
}^{\prime })}{\delta _{\alpha }}(\overline{z}_{\alpha }+ \overline{\mu }%
_{\alpha })}\, \Big)\, dvol \\
\overset{(\text{cf}.\, (\ref{4.5}))}{=} \int_{M}\,\underset{k\in \mathbb{Z}%
^{n}}{\sum}\Big(e^{-2 \pi \, {\overset{n}{\underset{\alpha ,\beta =1}{\sum }}%
}k_{\alpha }k_{\beta }\tau _{\alpha \beta 2}}\, \Big)\hspace*{250pt}  \notag
\\
\cdot \Big( e^{{\scriptsize -2\pi \big[ \overset{n}{\underset{\alpha ,\beta
=1}{\sum }}{W_{\alpha \beta }}(z_{\alpha 2}+\mu_{\alpha 2})(z_{\beta 2}+\mu
_{\beta 2})+2{\overset{n}{\underset{\alpha ,\beta =1}{\sum }}}\tau _{\alpha
\beta 2}\frac{m_{\alpha }}{\delta _{\alpha }}k_{\beta }+2{\overset{n}{%
\underset{\alpha =1}{\sum }}}\frac{(k_{\alpha }\delta _{\alpha }+m_{\alpha })%
}{\delta _{\alpha }}(z_{\alpha 2}+\mu _{\alpha 2})\big] }}\, \Big) dvol. 
\notag
\end{eqnarray}

\noindent It is slightly tedious but straightforward for (\ref{Imteg}) to
complete the square in the following form: (here identities such as ${%
\overset{n}{\underset{\alpha =1}{\sum }}}\frac{k_{\alpha }\delta _{\alpha
}+m_{\alpha }}{\delta _{\alpha }}(z_{\alpha 2}+\mu _{\alpha 2})$ $=$ ${%
\overset{n}{\underset{\alpha ,\beta ,\eta =1}{\sum }}}W_{\alpha \beta
}\,(z_{\alpha 2}+\mu _{\alpha 2})\,\tau _{\beta \eta 2}\,\frac{k_{\eta
}\delta _{\eta }+m_{\eta }}{\delta _{\eta }}$ since ${\overset{n}{\underset{%
\beta =1}{\sum }}}W_{\alpha \beta }\,\tau _{\beta \eta 2}$ $=$ $\delta
_{\eta }^{\alpha }$ are useful)

\begin{eqnarray}  \label{Im}
I_{m}=e^{2\pi {\overset{n}{\underset{\alpha ,\beta =1}{\sum }}}\tau _{\alpha
\beta 2 }\frac{m_{\alpha m_{\beta}}}{\delta _{\alpha } \delta_{\beta}}}%
\hspace*{320pt}  \notag \\
\cdot \int_{M}\,\sum\limits_{k\in \mathbb{Z}^{n}}e^{-2\pi \big[ \overset{n}{%
\underset{\alpha ,\beta =1}{\sum }}{W_{\alpha \beta }}( z_{\alpha 2}+\mu
_{\alpha 2}+{\overset{n}{\underset{\gamma =1}{\sum }}}\tau _{\alpha \gamma 2}%
\frac{(k_{\gamma }\delta _{\gamma }+m_{\gamma })}{\delta _{\gamma }}) (
z_{\beta 2}+\mu _{\beta2 }+{\overset{n}{\underset{\eta =1}{\sum }}}\tau
_{\beta \eta 2}\frac{(k_{\eta }\delta_{\eta }+m_{\eta })}{\delta _{\eta }}) %
\big] }dvol.
\end{eqnarray}

\noindent With the same change of variables $z_{\alpha 1}=\delta_{\alpha}\,
t_{\alpha}+\overset{n}{\underset{l=1}{\sum }}\tau_{\alpha l 1}\, t_{n+l}$, $%
z_{\alpha 2}=\overset{n}{\underset{l=1}{\sum }}\tau_{\alpha l 2}\, t_{n+l}$
as before, we are now integrating\footnote{%
It is not absolutely necessary to find this explicit form. If one uses $(\ref%
{Im})$ and regards it as an \textit{implicit function} in terms of $t_{1},
t_{2},...,t_{2n}$, one first observes that the integration involving $%
dt_{1}dt_{2}...dt_{n}$ is easily obtained as that in $(\ref{Im2})$ and then
that the complete integration can still be done over $\mathbb{R}^{n}$ in a
way similar to $(\ref{B4Gauss})$, with the previous implicit functions.
Being on $\mathbb{R}^{n}$, one returns now to the original variables; in
this way one soon obtains the same result via the Gaussian integration $(\ref%
{Gaussian})$ applied to the quadratic term in the bracket $[...]$ of $(\ref%
{Im})$.} over the $2n$-dimensional unit cube $I^{2n} $:

\begin{eqnarray}  \label{4.9}
I_{m}=e^{2\pi {\overset{n}{\underset{\alpha ,\beta =1}{\sum }}}\tau
_{\alpha\beta 2 }\frac{m_{\alpha } m_{\beta}}{\delta _{\alpha }
\delta_{\beta}}}\hspace*{330pt}  \notag \\
\hspace*{40pt}\cdot \int_{I^{2n}}\,\sum\limits_{k\in \mathbb{Z}^{n}}e^{-2\pi %
\big[\overset{n}{\underset{\alpha ,\beta =1}{\sum }}{\tau_{\alpha \beta 2}}%
(t_{n+\alpha}+k_{\alpha} +\frac{m_{\alpha}}{\delta_{\alpha}}+\overset{n}{%
\underset{l=1}{\sum}}W_{\alpha l} \mu _{l 2}) (t_{n+\beta}+k_{\beta} +\frac{%
m_{\beta}}{\delta_{\beta}}+\overset{n}{\underset{\gamma=1}{\sum}}W_{\beta
\gamma} \mu _{\gamma 2}) \big] }\, \mathcal{J}\ dt_{1}...dt_{2n}.
\end{eqnarray}

\bigskip \noindent \hspace{12pt} To simplify the notation, we let $%
G(t_{n+1},...,t_{2n})=e^{-2 \pi \overset{n}{\underset{\alpha , \beta =1}{\sum%
}} \tau_{\alpha \beta 2} t_{n+\alpha }t_{n+\beta }}$ and $%
p=(p_{1},...,p_{n}) $ where $p_{\alpha} = \frac{m_{\alpha}}{\delta_{\alpha}}+%
\overset{n}{\underset{l=1}{\sum}}W_{\alpha l} \mu _{l 2}$, and rewrite $(\ref%
{4.9})$ as

\begin{align}  \label{Im2}
\hspace*{12pt} I_{m}=e^{2\pi {\overset{n}{\underset{\alpha ,\beta =1}{\sum }}
}\tau _{\alpha \beta 2}\frac{m_{\alpha } m_{\beta}}{\delta _{\alpha }
\delta_{\beta}}}\hspace*{330pt}  \notag \\
\cdot \mathcal{J} \underset{{k\in \mathbb{Z}^{n}}}{\sum}\, \big( %
\int_{I^{n}}\, 1\, dt_{1}...dt_{n}\, \big) \int_{I^{n}}
G(t_{n+1}+k_{1}+p_{1},...,t_{2n}+k_{n}+p_{n})\ dt_{n+1}...dt_{2n}.\vspace*{%
12pt}
\end{align}

\bigskip \noindent By Sublemma \ref{sublem} below we have 
\begin{equation}  \label{B4Gauss}
I_{m}=e^{2\pi {\overset{n}{\underset{\alpha ,\beta =1}{\sum }}}\tau _{\alpha
\beta 2}\frac{m_{\alpha } m_{\beta}}{\delta _{\alpha }\delta_{\beta}}}%
\mathcal{J} \int_{\mathbb{R}^{n}}e^{-\overset{n}{\underset{\alpha ,\beta =1}{%
\sum }}2 \pi \, \tau _{\alpha \beta 2 }\, t_{n+\alpha }t_{n+\beta
}}dt_{n+1}...dt_{2n}.\hspace*{80pt}
\end{equation}
Applying to $(\ref{B4Gauss})$ the Gaussian integral 
\begin{equation}  \label{Gaussian}
\int_{%
\mathbb{R}
^{n}}e^{-X^{T}AX}dt_{n+1}...dt_{2n}=\frac{(\sqrt{\pi })^{n}}{\sqrt{\det A}}%
\hspace*{100pt}
\end{equation}%
where $A$ is a positive definite $n\times n$ matrix, we get the result : 
\begin{equation}
I_{m}=\sqrt{\det (\frac{\func{Im}\tau _{\alpha \beta }}{2})}\,\big( \overset{%
n}{\underset{\alpha=1}{\Pi}}\delta_{\alpha} \big)\, e^{2\pi \overset{n}{%
\underset{\alpha ,\beta =1}{\sum }}\func{Im}\tau_{\alpha \beta }\frac{%
m_{\alpha }m_{\beta}}{\delta _{\alpha }\delta_{\beta}}}.\hspace*{80pt}
\end{equation}
\end{proof}

\begin{sublemma}
\label{sublem} Let $f(x):\mathbb{R}^{n}\rightarrow \mathbb{R}$ be an
integrable function, $p\in \mathbb{R}^{n}$ any fixed point. Then 
\begin{equation*}
\underset{k\in \mathbb{Z}^{n}}{\sum }\,\int_{I^{n}}\,f(x+k+p)\ dx=\int_{%
\mathbb{R}^{n}}\,f(x+p)\ dx=\int_{\mathbb{R}^{n}}\,f(x)\ dx.\hspace*{60pt}%
\vspace*{16pt}
\end{equation*}
\end{sublemma}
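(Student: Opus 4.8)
The plan is to derive both equalities from the single geometric fact that the translated cubes $\{I^{n}+k\}_{k\in\mathbb{Z}^{n}}$ tile $\mathbb{R}^{n}$, i.e. they cover $\mathbb{R}^{n}$ and their pairwise intersections lie in the union of the hyperplanes $\{x_{j}\in\mathbb{Z}\}$, hence have Lebesgue measure zero. First I would make, in each summand on the left-hand side, the change of variable $y=x+k$, so that
\[
\int_{I^{n}}f(x+k+p)\,dx=\int_{I^{n}+k}f(y+p)\,dy .
\]
This rewrites the left-hand side as $\sum_{k\in\mathbb{Z}^{n}}\int_{I^{n}+k}f(y+p)\,dy$, an integral of the fixed function $y\mapsto f(y+p)$ over the disjoint (mod null sets) pieces $I^{n}+k$.

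Next I would collapse the sum by countable additivity of the Lebesgue integral over the measurable partition $\{I^{n}+k\}_{k}$ of $\mathbb{R}^{n}$, obtaining $\int_{\mathbb{R}^{n}}f(y+p)\,dy$; this establishes the first equality. The second equality is then immediate from the translation invariance of Lebesgue measure applied to $y\mapsto f(y+p)$, that is, the substitution $y\mapsto y-p$ which carries $\int_{\mathbb{R}^{n}}f(y+p)\,dy$ to $\int_{\mathbb{R}^{n}}f(y)\,dy$.

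The only genuinely delicate point is the interchange of the infinite sum over $k\in\mathbb{Z}^{n}$ with the integral. Since $f$ is assumed integrable, I would justify it by first treating $f\ge 0$ via Tonelli's theorem (equivalently, the monotone convergence theorem applied to partial sums over finite boxes exhausting $\mathbb{Z}^{n}$), where no cancellation can occur, and then extending to general real-valued $f$ through the decomposition $f=f_{+}-f_{-}$ into nonnegative integrable parts. This is the step I expect to require the most care in general; in our intended application, however, $f$ is the Gaussian of $(\ref{Im2})$, for which the series converges absolutely and uniformly on compacta, so the interchange is unproblematic and the sublemma applies directly to pass from $(\ref{Im2})$ to $(\ref{B4Gauss})$.
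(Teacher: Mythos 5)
Your argument is correct and is precisely the standard justification the paper leaves implicit (the authors state Sublemma \ref{sublem} without proof): translate each summand onto the cube $I^{n}+k$, use that these cubes tile $\mathbb{R}^{n}$ up to null sets together with countable additivity (Tonelli for $f\ge 0$, then $f=f_{+}-f_{-}$), and finish with translation invariance. Your closing remark that the application in $(\ref{Im2})$--$(\ref{B4Gauss})$ only needs the absolutely convergent Gaussian case is also accurate.
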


\noindent \hspace*{12pt} By Lemma \ref{Lemma5.2}, the value of $(\theta
_{m}(z;\mu ),\theta _{m}(z;\mu ))_{h_{L_{\mu }}}$ in Definition \ref{Def5.1}
is independent of $\mu $, giving the first part of the following theorem.
The second part of it follows from the first statement, Proposition $\ref%
{Prop4.3}$, and Lemma \ref{Lemma5.2}:\vspace*{4pt}

\begin{theorem}
\label{theo4.3} $(1)$ On $K$, the curvature tensor of the metric $(\ ,\
)_{h} $ defined in Definition \ref{Def5.1} is identically zero. $(2)$ $K$
splits holomorphically into a direct sum of holomorphically trivial line
bundle $K=\underset{m\in \mathfrak{M}}{\oplus }K_{m}$ where each $K_{m}$ has
the canonical section identified as $\theta _{m}$ in Lemma \ref{Lemma5.2}.
Note that for any $\mu $ $\in $ $M_{2}(=M)$ $\theta _{m}(v;\mu )$ $\in $ $%
(K_{m})_{\mu }$ is nontrivial although it has zeros along $v,$ so that $%
\theta _{m}(v;\mu )$ is nowhere vanishing as a section of $K_{m}.$
\end{theorem}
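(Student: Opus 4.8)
The plan is to deduce both statements from the explicit structure we have already established: the orthogonality of the $\theta_m(v;\mu)$ in Lemma \ref{Lemma5.2}, the fact that their squared norms are independent of $\mu$, and Proposition \ref{Prop4.3} which exhibits $\{\theta_m(v;\mu)\}_{m\in\mathfrak{M}}$ as a global holomorphic frame for $\widetilde{K}\to M_1\times M_2$. For part $(1)$, I would argue that the $L^2$-metric $(\ ,\ )_h$ on $K$ is, with respect to the frame $\{\theta_m\}$, represented by a Gram matrix whose entries are $(\theta_m(v;\mu),\theta_{m'}(v;\mu))_{h_{L_\mu}}$. By Lemma \ref{Lemma5.2} this Gram matrix is diagonal, and by the remark preceding the theorem each diagonal entry $I_m=(\theta_m,\theta_m)_{h_{L_\mu}}$ is independent of $\mu$. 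Hence the Hermitian metric $h$ is represented by a \emph{constant} matrix in the holomorphic frame $\{\theta_m\}$. The Chern curvature of a Hermitian holomorphic bundle in a holomorphic frame is $\Theta=-\overline\partial(\overline{H}^{-1}\partial\overline{H})$ (or equivalently $\bar\partial(H^{-1}\partial H)$ up to convention) where $H$ is the Gram matrix; since $H$ is constant, $\partial H=0$ and $\Theta\equiv 0$.

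**Extracting the holomorphic splitting.**

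For part $(2)$, the key point is that the frame $\{\theta_m\}$ is not merely smooth but \emph{holomorphic} and globally defined on all of $M_2$, by Proposition \ref{Prop4.3}. Thus each $\theta_m$ spans a holomorphic line subbundle $K_m\subseteq K$, and the direct-sum decomposition $K=\bigoplus_{m\in\mathfrak{M}}K_m$ is a decomposition into holomorphic subbundles because the $\theta_m$ form a holomorphic frame globally. Each $K_m$ is holomorphically trivial since it admits the nowhere-vanishing global holomorphic section $\theta_m$. The orthogonality from Lemma \ref{Lemma5.2} shows moreover that this splitting is orthogonal with respect to $h$, so that the decomposition is compatible with the metric; this re-confirms part $(1)$, since a metric that is diagonal and constant in a holomorphic frame is visibly flat. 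The final sentence of the statement is a cautionary clarification: although $\theta_m(v;\mu)$ has zeros as a function of the fiber variable $v$ (it is, after all, a theta function and must vanish somewhere on $M_1$), as a \emph{section of} $K_m\to M_2$ its value at $\mu$ is the entire section $\theta_m(\,\cdot\,;\mu)\in(K_m)_\mu=H^0(M,L_\mu)$, which is nonzero precisely because the $\theta_m$ are linearly independent; hence $\theta_m$ is a nowhere-vanishing section of $K_m$ over $M_2$.

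**Where the substance lies.**

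The genuinely new content has already been discharged in Lemma \ref{Lemma5.2}: the vanishing of the off-diagonal Gram entries and the $\mu$-independence of the diagonal ones. Given those facts, the present theorem is essentially formal. The one step requiring care—and the point I would flag as the main (if modest) obstacle—is confirming that the $\{\theta_m\}$ genuinely serve as a \emph{global holomorphic} frame for $K$ over all of $M_2$, so that the resulting splitting is holomorphic rather than merely $C^\infty$. This is exactly what Proposition \ref{Prop4.3} provides, together with the computation $\dim H^0(M,L_\mu)=\prod_\alpha\delta_\alpha=|\mathfrak{M}|$ which guarantees that the $\theta_m$ are not only independent but span each fiber; so there are no further gaps to fill. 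One should only be slightly attentive to the Chern-connection sign/convention so that ``constant Gram matrix in a holomorphic frame'' is correctly matched to ``$\Theta=0$''.
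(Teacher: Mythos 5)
Your proposal is correct and follows essentially the same route as the paper: the paper likewise deduces part $(1)$ from the fact (Lemma \ref{Lemma5.2}) that the Gram matrix of $\{\theta_m\}$ is diagonal with $\mu$-independent entries, and part $(2)$ from part $(1)$ together with Proposition \ref{Prop4.3} and Lemma \ref{Lemma5.2}. Your write-up merely spells out the details that the paper leaves implicit.
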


\section{\textbf{Canonical Connection on the Poincar\'{e} line bundle and
Proof of Theorem \protect\ref{thm1.2}}}

\noindent \hspace*{12pt} In this section, we view the Abelian variety $M$ as
a real $2n$-dimensional manifold and intoduce a geometric description of the
Poincar\'e line bundle with a connection on it. Similar to \cite{CCT}, we
first define a line bundle $\mathscr{P} \rightarrow M \times M^{*}$ with a
connection and then identify it with the Poincar\'e line bundle $P
\rightarrow M \times \widehat{M}$. We follow the treatment in \cite{CCT} and 
\cite{DK} and generalize it to the $2n$-dimensional abelian variety $M$.%
\newline

\noindent \hspace*{12pt} To begin with, we write $V$ $\cong $ $\mathbb{R}%
^{2n}$ and $M$ $=$ $V/\Lambda $ where $\Lambda $ $=$ $\mathbb{Z}\{\lambda
_{1},...,\lambda _{2n}\}$ and in some standard coordinates of $\mathbb{R}%
^{2n}$ (see Notation \ref{N-2-1}), $\lambda _{1}=(\delta _{1},0,...,0,0)$, $%
\lambda _{2}=(0,0,\delta _{2},0,...,0,0)$,...,$\lambda _{n}=(0,0,....,\delta
_{n},0)$, and $\lambda _{n+\alpha }=\big(\tau _{\alpha 11},\tau _{\alpha
12},...,\tau _{\alpha n1},\tau _{\alpha n2}\big)$ for $\alpha =1,...,n$;
these $\delta $'s and $\tau $'s ($\tau _{\alpha 1}=\tau _{\alpha 11}+i\tau
_{\alpha 12}$) are from the period matrix $\Omega $ in Section 2. Let $%
\Lambda ^{\ast }=\mathbb{Z}\{dx_{1},...,dx_{2n}\}$ be the dual lattice of $%
\Lambda $ (so $\int_{\lambda _{i}}dx_{j}=\delta _{ij}$). Let $V^{\ast
}:=Hom_{\mathbb{R}}(V,\mathbb{R})=\{\xi _{1}dx_{1}+...+\xi _{2n}dx_{2n}\mid
\xi _{i}\in \mathbb{R},i=1,...,2n\}$. We define $M^{\ast }:=V^{\ast }/2\pi
\Lambda ^{\ast }$ and write $[\xi ]$ for the equivalence class of $\xi $ in $%
M^{\ast }$.\newline

\noindent \hspace*{12pt} Let $\underline{\mathbb{C}}|_{V}:V\times \mathbb{C}%
\rightarrow V$ be the trivial complex line bundle over $V$. An element $\xi
=\sum \xi _{i}dx_{i}\in V^{\ast }$ gives rise to a character $\chi _{\xi
}:\Lambda \rightarrow U(1)$ by $\chi _{\xi }(\lambda ):=e^{-i<\xi ,\lambda
>}=e^{-i\xi (\lambda )}$. The set $\Lambda $ acts on $\underline{\mathbb{C}}%
|_{V}$ by $\lambda \circ (x,\sigma ):=(x+\lambda ,\chi _{\xi }(\lambda )\,
\sigma )$. The natural horizontal foliation in $\underline{\mathbb{C}}|_{V}$
is preserved by such actions and thus descends to a flat connection $d$ on
the quotient $\underline{\mathbb{C}}|_{M}=\underline{\mathbb{C}}|_{V}$ $/$ $%
\Lambda $ over $M$. For $\xi \in V^{\ast }$, one defines a flat $U(1)$
connection on the complex line bundle $\underline{\mathbb{C}}|_{M}$ over $M$
by $\nabla ^{\xi }:=d+i\xi .$\newline

\noindent \hspace*{12pt} The gauge equivalence classes of flat line bundles
on $M$ are parametrized by $M^{\ast }$. We write $\mathcal{L}_{[\xi
]}:=\left( \underline{\mathbb{C}}|_{M},\nabla ^{\xi }\right) $ for the flat
line bundle on $M$ corresponding to the connection $\nabla ^{\xi },\xi \in
V^{\ast }$. With the connection $\nabla ^{\xi }$, the parallel transport
along the loop is given by $\chi _{\xi }$ above. Dually, for any $x\in V$ we
define a character $\chi _{x}:2\pi \Lambda ^{\ast }\rightarrow U(1)$ by $%
\chi _{x}(2\pi \mu ):=e^{-2\pi i<\mu ,x>}$ and get flat line bundles $%
\mathcal{L}_{[x]}$ over $M^{\ast }$ with the parallel transport $\chi _{x}$.
Similar to the $2$-dimensional case in \cite{CCT} and $4$-dimensional case
in \cite{DK}, we have the following lemma for the $2n$-dimensional $M$:

\begin{lemma}
\label{Pcal}There is a complex line bundle $\mathscr{P}\rightarrow M\times
M^{\ast }$with a unitary connection such that the restriction of $\mathscr{P}
$ to each $M_{[\xi ]}=M\times \{[\xi ]\}$ is isomorphic (as a line bundle
with connection) to $\mathcal{L}_{[\xi ]}$ and the restriction to each $%
M_{[x]}^{\ast }=\{[x]\}\times M^{\ast }$ is isomorphic to $\mathcal{L}_{[x]}$%
.
\end{lemma}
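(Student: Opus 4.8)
The plan is to construct $\mathscr{P}$ by descent from a trivial bundle on the universal cover $V\times V^{\ast}$, equipped with a tautological unitary connection, under an action of the product lattice $\Lambda\times 2\pi\Lambda^{\ast}$, following the pattern of the $2$- and $4$-dimensional cases in \cite{CCT} and \cite{DK}. First I would put on the trivial line bundle $\underline{\mathbb{C}}|_{V\times V^{\ast}}=(V\times V^{\ast})\times\mathbb{C}$ the connection $\nabla:=d+iA$, where $A:=\sum_{j=1}^{2n}\xi_{j}\,dx_{j}$ is the tautological $1$-form (here $\xi=\sum_{j}\xi_{j}\,dx_{j}\in V^{\ast}$). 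Its curvature $i\,dA=i\sum_{j}d\xi_{j}\wedge dx_{j}$ is translation-invariant, hence will descend. Crucially, restricting $\nabla$ to $V\times\{\xi\}$ gives $d+i\xi=\nabla^{\xi}$, while restricting to $\{x\}\times V^{\ast}$ gives the trivial connection $d$, since each $dx_{j}$ vanishes on the $V^{\ast}$-factor; these are precisely the connections carried by $\mathcal{L}_{[\xi]}$ and by the flat part of $\mathcal{L}_{[x]}$.

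Next I would prescribe the lifted action of $\Lambda\times 2\pi\Lambda^{\ast}$ through automorphy factors: $\lambda\in\Lambda$ acts by $(x,\xi,\sigma)\mapsto(x+\lambda,\xi,\sigma)$ with trivial factor, while $2\pi\mu\in 2\pi\Lambda^{\ast}$ acts by $(x,\xi,\sigma)\mapsto(x,\xi+2\pi\mu,\,e^{2\pi i\langle\mu,x\rangle}\sigma)$. The choice $b_{\mu}(x)=e^{2\pi i\langle\mu,x\rangle}$ is forced (up to a constant phase, which I take to be $1$) by the demand that $\nabla$ be preserved: translation by $2\pi\mu$ sends $A$ to $A+2\pi\mu$, so one needs $d\log b_{\mu}=2\pi i\mu$ to cancel this shift, whereas $A$ is already invariant under $\Lambda$, so the $\lambda$-factor may be taken to be $1$. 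Since all factors lie in $U(1)$ and $A$ is real, the standard Hermitian metric is preserved, so the descended connection will be unitary.

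The key step, and the one point where the arithmetic of the lattices enters, is to verify that these factors define a genuine action of the abelian group $\Lambda\times 2\pi\Lambda^{\ast}$, i.e.\ that they satisfy the cocycle (commutation) condition. Performing the two orders of translation, first by $\lambda$ then by $2\pi\mu$ versus the reverse, the two outcomes differ exactly by the phase $e^{2\pi i\langle\mu,\lambda\rangle}$, which equals $1$ precisely because $\Lambda^{\ast}$ is the dual lattice of $\Lambda$, so that $\langle\mu,\lambda\rangle\in\mathbb{Z}$. I expect this to be the main (indeed essentially the only) obstacle; everything else is formal bookkeeping of automorphy factors, and the cocycle relations internal to each lattice are immediate since $b_{\mu}$ is independent of $\xi$.

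Granting this, I would set $\mathscr{P}:=(\underline{\mathbb{C}}|_{V\times V^{\ast}},\nabla)\big/(\Lambda\times 2\pi\Lambda^{\ast})$, which is a unitary line bundle with connection on $M\times M^{\ast}$ since the lattice acts freely and properly discontinuously. Finally I would read off the two restrictions. On $M_{[\xi]}=M\times\{[\xi]\}$ the descended data is $(\underline{\mathbb{C}}|_{M},\,d+i\xi)=\mathcal{L}_{[\xi]}$, with independence of the representative $\xi$ guaranteed by the very same factor $b_{\mu}$ (it furnishes the gauge equivalence between $d+i\xi$ and $d+i(\xi+2\pi\mu)$ over $M$, using again $\langle\mu,\lambda\rangle\in\mathbb{Z}$ so that $e^{2\pi i\langle\mu,x\rangle}$ descends to $M$). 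On $M^{\ast}_{[x]}=\{[x]\}\times M^{\ast}$ the trivial connection $d$ together with the factor $b_{\mu}$ yields parallel transport $b_{\mu}^{-1}=e^{-2\pi i\langle\mu,x\rangle}=\chi_{x}(2\pi\mu)$ around the loop $2\pi\mu$, identifying it with $\mathcal{L}_{[x]}$. This completes the construction.
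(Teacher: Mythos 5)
Your construction is essentially the paper's own: the paper likewise obtains $\mathscr{P}$ by descending the trivial bundle with connection form $i\xi=i\sum_{k}\xi_{k}\,dx_{k}$ under the lifted $2\pi\Lambda^{\ast}$-action with automorphy factor $e^{-2\pi i\langle\nu,x\rangle}$ (working over $M\times V^{\ast}$ rather than $V\times V^{\ast}$, which is the same thing since your $\Lambda$-factor is trivial), and the integrality $\langle\mu,\lambda\rangle\in\mathbb{Z}$ plays exactly the role you identify. Your automorphy factor carries the opposite sign to the paper's, but you apply the inverse consistently when reading off the holonomy $\chi_{x}$, so this is only a difference of convention and the argument is correct.
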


\noindent \hspace*{12pt} In essence, lifting the action of $2\pi \Lambda
^{\ast }$ on $M\times V^{\ast }$ to the trivial line bundle $\underline{%
\mathbb{C}}|_{M\times V^{\ast }}$ over $M \times V^{*}$ endowed with the
connection one form $\mathbb{A}=i\xi =i \overset{2n}{\underset{k=1}{\sum}}
\xi _{k}\, dx_{k}\ (\xi \in V^{\ast })$ by $2\pi \nu \circ (x,\xi ,\sigma
):=(x,\xi +2\pi \nu ,e^{-2\pi i<\nu ,x>}\sigma )$ for $\nu \in \Lambda
^{\ast }$, one sees that this action preserves the connection $d+\mathbb{A}$
on $\underline{\mathbb{C}}|_{M\times V^{\ast }}$ and hence induces a
connection on the line bundle $\mathscr{P}:=\underline{\mathbb{C}}|_{M\times
V^{\ast }}/2\pi \Lambda ^{\ast }\rightarrow M\times M^{\ast }$, denoted by $%
\nabla ^{\mathscr{P}}$. We sometimes write 
\begin{equation}
\nabla ^{\mathscr{P}}=d+\mathbb{A}\hspace*{100pt}  \label{d+A}
\end{equation}%
by abuse of notation. The connection $\nabla ^{\mathscr{P}}$ has the
curvature $\Theta _{\nabla ^{\mathscr{P}}}=d\mathbb{A}=i\,\underset{k=1}{%
\overset{2n}{\sum }}\,d\xi _{k}\wedge dx_{k}$.

\noindent \hspace*{12pt} If we define a metric on the trivial line bundle $%
\underline{\mathbb{C}}|_{M\times V^{\ast }}$ by $<(x,\xi ,\sigma
_{1}),(x,\xi ,\sigma _{2})>:=\sigma _{1}\cdot \overline{\sigma _{2}}$, this
metric is preserved by the action of $2\pi \Lambda ^{\ast }$, and hence
induces a metric $h_{\mathscr{P}}$ on $\mathscr{P}$. One sees that $\nabla ^{%
\mathscr{P}}$ is compatible with $h_{\mathscr{P}}$ as required in Lemma \ref%
{Pcal}.\vspace*{8pt}

\noindent \hspace{12pt} Before we identify the line bundle $\mathscr{P}$ of
lemma \ref{Pcal} with the Poincar\'{e} line bundle $P$, we should first have
an isomorphism from $\widehat{M}$ to $M^{\ast }$, whose explicit form will
be needed in a number of discussions later:

\begin{lemma}
\label{Iso}One has an isomorphism 
\begin{equation*}
Iso:\widehat{M}\overset{\sim }{\longrightarrow }M^{\ast }.\hspace*{180pt}
\end{equation*}
\end{lemma}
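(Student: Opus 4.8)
The plan is to realize $Iso$ as the map on quotient tori induced by an explicit $\mathbb{R}$-linear isomorphism $\Psi\colon \overline{V}^{\,\ast}\to V^{\ast}$ that carries the lattice $\overline{\Lambda}^{\,\ast}$ onto $2\pi\Lambda^{\ast}$. Both $\widehat{M}=\overline{V}^{\,\ast}/\overline{\Lambda}^{\,\ast}$ and $M^{\ast}=V^{\ast}/2\pi\Lambda^{\ast}$ are real tori of dimension $2n$, so it suffices to produce such a $\Psi$: it then descends to a group isomorphism, and being $\mathbb{R}$-linear it is automatically a real-analytic diffeomorphism.

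First I would construct $\Psi$. Each $dx_k^{\ast}$ is by definition the conjugate-linear (i.e.\ $(0,1)$) part of the real functional $dx_k$, so that $dx_k=\overline{dx_k^{\ast}}+dx_k^{\ast}$ and hence $dx_k^{\ast}+\overline{dx_k^{\ast}}=dx_k\in V^{\ast}$; equivalently $\mathrm{Re}\,dx_k^{\ast}=\tfrac12\,dx_k$, which accounts for the half-integrality noted after (\ref{Pic}). I therefore set $\Psi(\ell):=\ell+\overline{\ell}$ for $\ell\in\overline{V}^{\,\ast}$, namely the real-linear functional $2\,\mathrm{Re}\,\ell\in V^{\ast}$. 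By the previous identity $\Psi(dx_k^{\ast})=dx_k$, so $\Psi$ sends the $\mathbb{Z}$-basis $\{dx_k^{\ast}\}$ of $\overline{\Lambda}^{\,\ast}$ to the $\mathbb{Z}$-basis $\{dx_k\}$ of $\Lambda^{\ast}$, whence $\Psi(\overline{\Lambda}^{\,\ast})=\Lambda^{\ast}$. Injectivity is clear because complex conjugation interchanges $H^{1,0}$ and $H^{0,1}=\overline{V}^{\,\ast}$, so the real form $V^{\ast}\subset H^1(M,\mathbb{C})$ projects isomorphically onto $\overline{V}^{\,\ast}$; equivalently, the $2n$ vectors $dx_k^{\ast}$, being a lattice basis, are $\mathbb{R}$-linearly independent and so span $\overline{V}^{\,\ast}$. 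Composing with multiplication by $2\pi$ gives $2\pi\Psi(\overline{\Lambda}^{\,\ast})=2\pi\Lambda^{\ast}$, and I take as the definition $Iso([\widehat{\mu}]):=[\,2\pi\Psi(\widehat{\mu})\,]=[\,2\pi(\widehat{\mu}+\overline{\widehat{\mu}})\,]$, the desired isomorphism $\widehat{M}\xrightarrow{\sim}M^{\ast}$.

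It remains to record the explicit form of $Iso$ (needed later) and to check that it is the geometrically correct isomorphism, i.e.\ the one under which the flat unitary structure on the degree-zero bundle $P_{\widehat{\mu}}$ corresponds to the flat bundle $\mathcal{L}_{[\xi]}=(\underline{\mathbb{C}}|_M,\nabla^{\xi})$ of Lemma~\ref{Pcal}. For the explicit form I would feed the expressions (\ref{xstarzpar}) for $dx_k^{\ast}$ in terms of $d\overline{z}_\beta$ through the reorderings $dy_\alpha^{\ast}=-dx_{n+\alpha}^{\ast}$, $dy_{n+\alpha}^{\ast}=dx_\alpha^{\ast}$ and the rescaling $v_\alpha^{\ast}=\tfrac{\delta_\alpha}{\delta_n}dy_\alpha^{\ast}$, thereby converting the complex coordinates $\widehat{\mu}_\alpha$ on $\overline{V}^{\,\ast}$ into the real coordinates $\xi_k$ on $V^{\ast}$. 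For the compatibility, I would compute the holonomy character $\lambda\mapsto\chi_\xi(\lambda)=e^{-i\xi(\lambda)}$ attached to $\nabla^{\xi}$ with $\xi=2\pi\Psi(\widehat{\mu})$ and match it with the monodromy character of the flat unitary structure on $P_{\widehat{\mu}}$ coming from (\ref{Pic}). The main obstacle is exactly this normalization bookkeeping: tracking the factor $2\pi$, the signs, and the half-integrality through the several basis changes, and confirming that $2\pi\Psi$ maps $\overline{\Lambda}^{\,\ast}$ onto the \emph{full} lattice $2\pi\Lambda^{\ast}$ (rather than a proper sublattice), which is what guarantees both that $Iso$ is bijective and that it realizes the classical identification $\mathrm{Pic}^0(M)\cong\mathrm{Hom}(\Lambda,U(1))$.
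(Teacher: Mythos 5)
Your proposal is correct and follows essentially the same route as the paper: the paper's proof simply declares $Iso$ to be the $\mathbb{R}$-linear map sending the lattice basis $dx_i^{\ast}$ of $\overline{\Lambda}^{\,\ast}$ to $2\pi dx_i$ and then records the coordinate formulas, which is exactly your $2\pi\Psi$ with $\Psi(\ell)=\ell+\overline{\ell}$ (your intrinsic description and the holonomy compatibility check are nice but extra; the latter is deferred in the paper to Proposition \ref{P to P}).
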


\begin{proof}
$\widehat{M}=\{\overset{n}{\underset{\alpha =1}{\sum }}\,\widehat{\mu }%
_{\alpha }v_{\alpha }^{\ast }\ |\ \widehat{\mu }_{\alpha }\in \mathbb{C}\}/%
\overline{\Lambda }^{\ast }$ \big(where $\widehat{\mu }_{\alpha }=\widehat{%
\mu }_{\alpha 1}+i\widehat{\mu }_{\alpha 2}$ and $\overline{\Lambda }%
^{\,\ast }=\{\overset{2n}{\underset{i=1}{\sum }}n_{i}dx_{i}^{\ast }\ |\
n_{i}\in \mathbb{Z}\}$ in the notation of Section 2\big)and $M^{\ast }=\{%
\overset{2n}{\underset{i=1}{\sum }}\,\xi _{i}dx_{i}^{\ast }\ |\ \xi _{i}\in 
\mathbb{R}
\}/2\pi \Lambda ^{\ast }$. The group isomorphism $Iso:\widehat{M}\rightarrow
M^{\ast }$ sends $dx_{i}^{\ast }$ to $2\pi dx_{i},$ $i=1,...,2n$ and
satisfies 
\begin{equation}
\begin{cases}
\begin{pmatrix}
\xi _{1} \\ 
\vdots \\ 
\xi _{n}%
\end{pmatrix}%
=2\pi \delta _{n}^{-1}\Delta _{\delta }W\Delta _{\delta }%
\begin{pmatrix}
\widehat{\mu }_{12} \\ 
\vdots \\ 
\widehat{\mu }_{n2}%
\end{pmatrix}%
\vspace*{8pt} \\ 
\begin{pmatrix}
\xi _{n+1} \\ 
\vdots \\ 
\xi _{2n}%
\end{pmatrix}%
=-2\pi \delta _{n}^{-1}\Delta _{\delta }%
\begin{pmatrix}
\widehat{\mu }_{11} \\ 
\vdots \\ 
\widehat{\mu }_{n1}%
\end{pmatrix}%
+2\pi \delta _{n}^{-1}(\func{Re}Z)W\Delta _{\delta }%
\begin{pmatrix}
\widehat{\mu }_{12} \\ 
\vdots \\ 
\widehat{\mu }_{n2}%
\end{pmatrix}%
.%
\end{cases}
\label{mutoxi}
\end{equation}%
Here $(\Delta _{\delta }|Z)$ is the period matrix $\Omega $ of $\Lambda
\subseteq V$ and $W$ is the inverse matrix of $\func{Im}Z$. Equivalently 
\begin{equation}
\begin{pmatrix}
\widehat{\mu }_{1} \\ 
\vdots \\ 
\widehat{\mu }_{n}%
\end{pmatrix}%
=(\frac{1}{2\pi }\delta _{n}\Delta _{\delta }^{-1}Z\Delta _{\delta }^{-1},%
\frac{-1}{2\pi }\delta _{n}\Delta _{\delta }^{-1})%
\begin{pmatrix}
\xi _{1} \\ 
\vdots \\ 
\xi _{2n}%
\end{pmatrix}%
.\hspace*{40pt}  \label{xitomu}
\end{equation}

\noindent In particular, for $n=1$, the isomorphism reduces to ([1, Lemma
7.1]) 
\begin{equation}
\begin{cases}
\xi_{1}= \frac{2 \pi \delta}{ \tau_{2}}\, \func{Im}(\widehat{\mu}) \\ 
\xi_{2}= \frac{2 \pi}{\tau_{2}}\, \left(\tau_{1} \func{Im}(\widehat{\mu})-
\tau_{2} \func{Re}(\widehat{\mu})\right)%
\end{cases}
\mbox{ and\hspace*{16pt}} 
\begin{cases}
\func{Re}(\widehat{\mu}) = \frac{-1}{2 \pi}\, \xi_{2} + \frac{1}{2 \pi} 
\frac{\tau_{1}}{\delta}\, \xi_{1} \\ 
\func{Im}(\widehat{\mu})= \frac{1}{2 \pi} \frac{\tau_{2}}{\delta}\, \xi_{1}%
\end{cases}%
\end{equation}

\noindent where $\Delta_{\delta}=\delta \in \mathbb{N}$ and $%
Z=\tau=\tau_{1}+i\tau_{2} \in \mathbb{C}$.
\end{proof}

\noindent \hspace*{12pt} We endow $M^{\ast }$ with the complex structure
inherited from $\widehat{M}$. Recall the Poincar\'{e} line bundle $%
P\rightarrow M\times \widehat{M}$ of Lemma \ref{Poincare}. To compare $P$
and $\mathscr{P}$, we first show that the global connection $\nabla ^{%
\mathscr{P}}$ on $\mathscr{P}$ (\ref{d+A}) gives a holomorphic structure on $%
\mathscr{P}$ (where this $M$ has been restored as an Abelian variety as
considered in the beginning of this section).\newline

\noindent \hspace*{12pt} For the holomorphic structure on $\mathscr{P}$,
write $\widetilde{Iso}:=(Id,Iso):M\times \widehat{M}\rightarrow M\times
M^{\ast }$ with the isomorphism $Iso:\widehat{M}\rightarrow M^{\ast }$ in
Lemma \ref{Iso}, and form the pull-back bundle $\widetilde{Iso}^{\ast }%
\mathscr{P}$ equipped with the metric $\widetilde{Iso}^{\ast }h_{\mathscr{P}}
$ and the connection $\widetilde{Iso}^{\ast }\nabla ^{\mathscr{P}}$. By (\ref%
{d+A}) $\nabla ^{\mathscr{P}}=d+i\xi ,\xi \in V^{\ast }$, one has by using $(%
\ref{realcomplex})$ and $(\ref{mutoxi})$ in the proof of Lemma \ref{Iso}
that 
\begin{equation}
\widetilde{Iso}^{\ast }\nabla ^{\mathscr{P}}=d+\mathbb{A}=d+\pi \overset{n}{%
\underset{\alpha ,\beta =1}{\sum }}\,W_{\alpha \beta }(\,\,\frac{\delta
_{\beta }}{\delta _{n}}\widehat{\mu }_{\beta }d\overline{z}_{\alpha }-\frac{%
\delta _{\alpha }}{\delta _{n}}\overline{\widehat{\mu }_{\alpha }}dz_{\beta
}\,)  \label{5.5}
\end{equation}%
and its curvature 
\begin{equation}
\Theta _{\widetilde{Iso}^{\ast }\nabla ^{\mathscr{P}}}(=d\mathbb{A})=\pi 
\overset{n}{\underset{\alpha ,\beta =1}{\sum }}\,W_{\alpha \beta }(\,\,\frac{%
\delta _{\beta }}{\delta _{n}}d\widehat{\mu }_{\beta }\wedge d\overline{z}%
_{\alpha }-\frac{\delta _{\alpha }}{\delta _{n}}d\overline{\widehat{\mu }%
_{\alpha }}\wedge dz_{\beta }\,),  \label{dA}
\end{equation}%
giving rise to a holomorphic line bundle structure on $\widetilde{Iso}^{\ast
}\mathscr{P}$ since $(\ref{dA})$ is of type $(1,1)$ (cf. \cite{CCT}). This
holomorphic structure induces a holomorphic structure on $\mathscr{P}$ via
the isomorphism $\widetilde{Iso}^{-1}$. We can now identify $P$ and $%
\mathscr{P}$ by showing that $P\cong \widetilde{Iso}^{\ast }\mathscr{P}$:

\begin{proposition}
\label{P to P} With the notations above, let $P\rightarrow M\times \widehat{M%
}$ be the Poincar\'{e} line bundle of Lemma \ref{Poincare}, and $\mathscr{P}%
\rightarrow M\times M^{\ast }$ of Lemma \ref{Pcal} with the holomorphic
structure just given. Then $P\cong \widetilde{Iso}^{\ast }\mathscr{P}$.
\end{proposition}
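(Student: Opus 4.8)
The plan is to invoke the uniqueness clause of Lemma \ref{Poincare}: a holomorphic line bundle on $M\times\widehat M$ that restricts to $P_{\widehat\mu}$ on every slice $M\times\{\widehat\mu\}$ and is holomorphically trivial on $\{0\}\times\widehat M$ must be isomorphic to $P$. Since \eqref{dA} shows the curvature of $\widetilde{Iso}^\ast\mathscr P$ is of type $(1,1)$, the bundle $\widetilde{Iso}^\ast\mathscr P$ already carries the holomorphic structure described just above the statement, so it remains only to verify these two defining properties for it.

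For the triviality on $\{0\}\times\widehat M$, I would simply restrict the connection form \eqref{5.5}. Every term of $\mathbb A$ there is a multiple of $dz_\beta$ or $d\overline z_\alpha$ and contains no $d\widehat\mu$; on the slice $\{0\}\times\widehat M$ the coordinate $z$ is constant, so these forms vanish and $\widetilde{Iso}^\ast\nabla^{\mathscr P}$ reduces to the trivial connection $d$. The induced operator $\overline\partial$ then has the constant section as a global holomorphic frame, so $\widetilde{Iso}^\ast\mathscr P|_{\{0\}\times\widehat M}$ is holomorphically trivial, which is property (ii).

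The substantive step is property (i). Fix $\widehat\mu$ and set $[\xi]=Iso(\widehat\mu)$. By Lemma \ref{Pcal} the slice $\widetilde{Iso}^\ast\mathscr P|_{M\times\{\widehat\mu\}}$ is the flat unitary bundle $\mathcal L_{[\xi]}$, whose smooth sections are functions $f$ on $V$ with the constant multipliers $e_\lambda=e^{-i\xi(\lambda)}$, and whose holomorphic structure is $\overline\partial+\mathbb A^{0,1}$ with $\mathbb A^{0,1}=\pi\sum_{\alpha,\beta}W_{\alpha\beta}\tfrac{\delta_\beta}{\delta_n}\widehat\mu_\beta\,d\overline z_\alpha$ read off from \eqref{5.5}. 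I would solve $\overline\partial\phi=\mathbb A^{0,1}$ by the linear function $\phi=\pi\sum_{\alpha,\beta}W_{\alpha\beta}\tfrac{\delta_\beta}{\delta_n}\widehat\mu_\beta\,\overline z_\alpha$ and pass to the holomorphic frame $g=e^{\phi}f$; this converts the smooth multipliers into the holomorphic multipliers $\widetilde e_\lambda=\exp\bigl(\phi(\,\cdot+\lambda)-\phi(\,\cdot)-i\xi(\lambda)\bigr)$. Evaluating $\widetilde e_\lambda$ on $\lambda=\lambda_\alpha$ and $\lambda=\lambda_{n+\alpha}$, with the complex coordinates of the lattice vectors taken from the period matrix $(\Delta_\delta,Z)$ (Notation \ref{N-2-1}), with $\xi=Iso(\widehat\mu)$ substituted through \eqref{mutoxi}, and with $\mu_\alpha=\tfrac{\delta_\alpha}{\delta_n}\widehat\mu_\alpha$, the aim is to reach $\widetilde e_{\lambda_\alpha}\equiv1$ and $\widetilde e_{\lambda_{n+\alpha}}=e^{-2\pi i\mu_\alpha}$. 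These are exactly the multipliers of $P_{\widehat\mu}\cong\tau_\mu^\ast L_0\otimes L_0^\ast$ recorded in the proof of Proposition \ref{Prop4.2}, so $\widetilde{Iso}^\ast\mathscr P|_{M\times\{\widehat\mu\}}\cong P_{\widehat\mu}$ and property (i) holds.

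I expect the main obstacle to be precisely this multiplier evaluation. The bookkeeping couples three ingredients — the translation of $\overline z_\alpha$ by the complex coordinates of $\lambda_\alpha,\lambda_{n+\alpha}$, the explicit form of $\xi=Iso(\widehat\mu)$ from \eqref{mutoxi}, and the identity $W=(\func{Im}Z)^{-1}$ together with the symmetry of $Z$ — and it is only through the ensuing cancellations that the apparently transcendental factors collapse to the clean constants above. This is the place where the $n$-dimensional computation is genuinely heavier than its elliptic-curve counterpart in \cite{CCT}, to which the remaining routine verification can be referred.
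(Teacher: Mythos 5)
Your overall architecture coincides with the paper's: invoke the uniqueness clause of Lemma \ref{Poincare} and verify that $\widetilde{Iso}^{\ast }\mathscr{P}$ restricts to $P_{\widehat{\mu }}$ on each slice $M\times \{\widehat{\mu }\}$ and is holomorphically trivial on $\{0\}\times \widehat{M}$. Your argument for property $ii)$ is essentially the paper's, with one small omission: the vanishing of the restricted connection form only shows the connection is locally trivial, while global triviality of $\mathscr{P}|_{\{0\}\times M^{\ast }}$ also needs the gluing data of the quotient by $2\pi \Lambda ^{\ast }$ to be trivial over $x=0$, i.e.\ the one-line observation that $e^{-2\pi i\langle \nu ,0\rangle }=1$, which is exactly what the paper records.

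The genuine gap is in property $i)$, in the frame bookkeeping. The slice $\mathscr{P}|_{M\times \{[\xi ]\}}$, as constructed, is the honestly trivial bundle $\underline{\mathbb{C}}|_{M}$ (sections are $\Lambda $-periodic functions, multipliers $\equiv 1$) carrying the connection $d+i\xi $; the twisted operator $\overline{\partial }+\mathbb{A}^{0,1}$ lives in \emph{this} frame. The constant multipliers $e^{\mp i\xi (\lambda )}$ describe a different, gauge-equivalent frame (the equivariant one), in which the connection is the descended $d$ and the holomorphic structure is the plain $\overline{\partial }$. Using both at once double-counts the twist: your formula $\widetilde{e}_{\lambda }=\exp (\phi (\lambda )-i\xi (\lambda ))$ equals $\exp (-\psi (\lambda ))$, where $\psi $ is the $(1,0)$-primitive of $\mathbb{A}^{1,0}$; this depends antiholomorphically on $\widehat{\mu }$ and does not reduce to $1$ and $e^{-2\pi i\mu _{\alpha }}$. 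Even starting correctly from trivial multipliers, your particular choice $\phi =\pi \sum W_{\alpha \beta }\frac{\delta _{\beta }}{\delta _{n}}\widehat{\mu }_{\beta }\overline{z}_{\alpha }$ yields $\widetilde{e}_{\lambda _{\gamma }}=e^{\pi \delta _{\gamma }\sum_{\beta }W_{\gamma \beta }\frac{\delta _{\beta }}{\delta _{n}}\widehat{\mu }_{\beta }}\neq 1$: the resulting multipliers are only \emph{cohomologous} to those of $P_{\widehat{\mu }}$ in (\ref{multP}), and closing the argument still requires exhibiting a nowhere-vanishing holomorphic section of the difference bundle --- precisely the content of the paper's Lemma \ref{L=P+L}, which your plan omits. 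Two repairs: $(a)$ drop the factor $e^{-i\xi (\lambda )}$ and replace $\phi $ by the $\overline{\partial }$-primitive $-2\pi i\sum_{\alpha ,\beta }W_{\alpha \beta }\frac{\delta _{\beta }}{\delta _{n}}\widehat{\mu }_{\beta }\func{Im}(z_{\alpha })$ of $\mathbb{A}^{0,1}$; then $\func{Im}(z_{\alpha }(\lambda _{\gamma }))=0$ and $\sum_{\alpha }W_{\alpha \beta }\func{Im}\tau _{\gamma \alpha }=\delta _{\gamma \beta }$ give $\widetilde{e}_{\lambda _{\gamma }}=1$ and $\widetilde{e}_{\lambda _{n+\gamma }}=e^{-2\pi i\mu _{\gamma }}$ on the nose; or $(b)$ follow the paper: work in the equivariant frame with constant holomorphic multipliers $e^{i\xi _{j}}$ and plain $\overline{\partial }$, and trivialize $\mathcal{L}_{[\xi ]}\otimes P_{\widehat{\mu }}^{\ast }$ by the explicit section $\Phi _{\xi }(v)=e^{i\sum_{\alpha }\xi _{\alpha }z_{\alpha }/\delta _{\alpha }}$ after substituting (\ref{xitomu}).
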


\begin{proof}
To prove that $P\cong \widetilde{Iso}^{\ast }\mathscr{P}$, by Lemma \ref%
{Poincare} it suffices to show that $\mathscr{P}\rightarrow M\times M^{\ast
} $ satisfies the following two properties : $i)$ for any $[\xi ]\in M^{\ast
} $, the line bundle $\mathcal{L}_{[\xi]} \cong \mathscr{P}|_{M \times
\{[\xi]\}}$ is holomorphically isomorphic to $P_{Iso^{-1}[\xi]}=P_{\widehat{%
\mu}}$ and $ii)$ $\mathscr{P}|_{\{0\} \times M^{*}}$ is holomorphically
trivial on $\{0\} \times M^{*}$.\newline

\noindent \hspace*{12pt} To prove $i)$ of the claim above, from the action $%
\lambda \circ (x,\sigma )=(x+\lambda ,e^{-i\,<\xi ,\lambda >}\,\sigma )$
given in the beginning of this section, the holonomy transforms the basis $%
\{\lambda _{1},...,\lambda _{2n}\}$ by $\chi _{\xi }(\lambda
_{j})=e^{-i\,<\xi ,\lambda _{j}>},\ j=1,...,2n$. The multipliers for $%
\mathcal{L}_{[\xi ]}$ are thus\footnote{%
We follow the convention of \cite[proof of Lemma in page 310]{GH} that the
multipliers transform the ``coefficient-part" rather than the ``basis-part",
and so they look inverse to holonomy transforms above.} 
\begin{equation}
e_{\lambda _{j}}(v)=e^{i\xi _{j}},\hspace{10pt}j=1,...,2n.\hspace*{234pt}
\label{multmathcalL}
\end{equation}%
\noindent On the other hand, we derive from $(\ref{2.3}),(\ref{theta section}%
)$ and (\ref{complex linear}) that the multipliers for $P_{\widehat{\mu }}$
with $\widehat{\mu }=\underset{\alpha }{\sum }\widehat{\mu }_{\alpha
}v_{\alpha }^{\ast }$ can be given by 
\begin{equation}
\begin{cases}
e_{\lambda _{\alpha }}(v)=1 \\ 
e_{\lambda _{n+\alpha }}(v)=e^{-2\pi i\,\mu _{\alpha }}=e^{-2\pi i\frac{%
\delta _{\alpha }}{\delta _{n}}\widehat{\mu }_{\alpha }},%
\end{cases}%
\alpha =1,...,n.\hspace*{170pt}  \label{multP}
\end{equation}

\noindent To match the two sets of multipliers $(\ref{multmathcalL})$ and $(%
\ref{multP} )$, let $L_{\Delta, \xi}=\mathcal{L}_{[\xi]} \otimes P^{*}_{%
\widehat{\mu}} \rightarrow M$, and accordingly $L_{\Delta , \xi}$ has the
multipliers 
\begin{equation}  \label{multLminusP}
\begin{cases}
e_{\lambda_{\alpha}}(v)=e^{i \xi_{\alpha}} \\ 
e_{\lambda_{n+\alpha}}(v)= e^{i \xi_{n+\alpha}+2 \pi i \frac{\delta_{\alpha}%
}{\delta_{n}}\widehat{\mu}_{\alpha}},%
\end{cases}
\alpha =1,...,n. \hspace*{180pt}
\end{equation}
By $(\ref{xitomu})$ expressing $\widehat{\mu}$ in terms of $\xi$, we compute
the RHS of $(\ref{multLminusP})$ and rewrite the result as 
\begin{equation}  \label{multL}
\begin{cases}
e_{\lambda_{\alpha}}(v)=e^{i \xi_{\alpha}} \\ 
e_{\lambda_{n+\alpha}}(v)=e^{i\overset{n}{\underset{\beta =1}{\sum}} \frac{%
\tau_{\alpha \beta}}{\delta_{\beta}}\xi_{\beta}},%
\end{cases}
\alpha =1,...,n. \hspace*{230pt}
\end{equation}

\noindent By Lemma \ref{L=P+L} below $L_{\Delta, \xi}$ is holomorphically
trivial so that $\mathcal{L}_{[\xi]}\cong \mathrm{{P}_{\widehat{\mu}}}$,
proving $i)$.\newline

\noindent \hspace*{12pt} For $ii)$ of the claim above, the action $2 \pi \nu
\circ (x, \xi, \sigma) = (x, \xi + 2 \pi \nu, e^{- 2 \pi i\, <\nu, x>}\,
\sigma)$ gives $2 \pi \nu \circ (0, \xi , \sigma) = (0, \xi+2 \pi \nu,
\sigma)$ at $x=0$. Since $\sigma$ is unchanged, it follows that $\mathscr{P}%
|_{\{0\}\times M^{*}}$ has trivial multipliers and therefore is a
holomorphically trivial line bundle on $\{0\}\times M^{*}$, proving $ii)$.
\end{proof}

\begin{lemma}
\label{L=P+L} For any given $\xi \in V^{\ast}$ the above line bundle $%
L_{\Delta, \xi}$ with multipliers (\ref{multL}) is holomorphically trivial.
\end{lemma}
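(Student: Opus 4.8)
The plan is to trivialize $L_{\Delta,\xi}$ by exhibiting an explicit nowhere-vanishing holomorphic function on $V$ whose associated multiplier coboundary reproduces exactly the system (\ref{multL}). Recall (following the convention used throughout, cf. \cite[p.\ 308]{GH}) that a line bundle on $M=V/\Lambda$ described by a multiplier system $\{e_\lambda\}$ is holomorphically trivial precisely when there is a nowhere-zero $f\in\mathcal{O}^{\ast}(V)$ with $e_{\lambda}(v)=f(v+\lambda)/f(v)$. Moreover, since both $\{e_\lambda\}$ and $v\mapsto f(v+\lambda)/f(v)$ satisfy the multiplicative cocycle relation, it suffices to verify this identity on the lattice generators $\lambda_1,\dots,\lambda_{2n}$.

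Because the multipliers in (\ref{multL}) are constants (independent of $v$), I would look for $f$ of the simplest possible type, a linear exponential $f(v)=\exp\big(\sum_{\alpha=1}^{n}a_\alpha z_\alpha\big)$ with constants $a_\alpha\in\mathbb{C}$ to be determined, where $(z_1,\dots,z_n)$ are the complex coordinates of Notation \ref{N-2-1}. For such $f$ one has $f(v+\lambda)/f(v)=\exp\big(\sum_\alpha a_\alpha z_\alpha(\lambda)\big)$, so the whole matter reduces to matching constants. Using $\lambda_\alpha=\delta_\alpha v_\alpha$ and $\lambda_{n+\alpha}=\sum_k\tau_{\alpha k}v_k$ from Section 2, the coordinate evaluations are $z_\gamma(\lambda_\alpha)=\delta_\alpha$ when $\gamma=\alpha$ and $0$ otherwise, while $z_\gamma(\lambda_{n+\alpha})=\tau_{\alpha\gamma}$.

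Matching the first $n$ multipliers $e_{\lambda_\alpha}=e^{i\xi_\alpha}$ forces $a_\alpha\delta_\alpha=i\xi_\alpha$, so I would simply set $a_\alpha=i\xi_\alpha/\delta_\alpha$. The crux is then to check that this single choice also reproduces the remaining $n$ multipliers: the coboundary along $\lambda_{n+\alpha}$ equals $\exp\big(\sum_\gamma a_\gamma\tau_{\alpha\gamma}\big)=\exp\big(i\sum_\gamma\frac{\tau_{\alpha\gamma}}{\delta_\gamma}\xi_\gamma\big)$, which is precisely the second line of (\ref{multL}). This compatibility is not accidental but is the very reason the multipliers were rewritten into the form (\ref{multL}) via the change of variables (\ref{xitomu}) of Lemma \ref{Iso}: the coefficient $\tau_{\alpha\beta}/\delta_\beta$ appearing there is exactly what makes the two families of conditions simultaneously solvable by one linear exponential. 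Taking $f(v)=\exp\big(i\sum_{\alpha}\frac{\xi_\alpha}{\delta_\alpha}z_\alpha\big)$, which is holomorphic and nowhere vanishing on $V$, then completes the trivialization.

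I do not anticipate a genuine obstacle here; the argument is a direct verification once the ansatz is chosen. The only points requiring care are the index bookkeeping and the factors $1/\delta_\alpha$ in the exponent, together with the observation that checking the coboundary identity on the generators $\lambda_1,\dots,\lambda_{2n}$ is enough by the cocycle property. If one preferred a coordinate-free phrasing, the same $f$ could be described as the exponential of the $\mathbb{C}$-linear functional on $V$ sending $v_\alpha\mapsto i\xi_\alpha/\delta_\alpha$, but the explicit formula above is the most transparent.
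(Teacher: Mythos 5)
Your proposal is correct and is essentially the paper's own argument: the paper likewise takes the nowhere-vanishing entire function $\Phi_{\xi}(v)=\exp\bigl(i\sum_{\alpha}\tfrac{\xi_{\alpha}}{\delta_{\alpha}}z_{\alpha}\bigr)$ and checks that its quasi-periodic factors along $\lambda_{\alpha}$ and $\lambda_{n+\alpha}$ reproduce exactly the multipliers (\ref{multL}), so that it is a nowhere-vanishing holomorphic section trivializing $L_{\Delta,\xi}$. The only cosmetic difference is that you phrase the verification as matching a multiplier coboundary $f(v+\lambda)/f(v)$ on the lattice generators, which is the same computation.
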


\begin{proof}
We claim that 
\begin{equation*}
\Phi _{\xi }(v)=e^{i\,\overset{n}{\underset{\alpha =1}{\sum }}a_{\alpha
}z_{\alpha }},\hspace*{10pt}a_{\alpha }=\frac{\xi _{\alpha }}{\delta
_{\alpha }}\hspace*{160pt}
\end{equation*}%
is a global section of $L_{\Delta ,\xi }\rightarrow M$. From the definition
of $\Phi _{\xi }(v)$, one sees that 
\begin{equation}
\begin{cases}
\Phi _{\xi }(z_{1},...,z_{\alpha }+\delta _{\alpha },...z_{n})=e^{i\,\xi
_{\alpha }}\Phi _{\xi }(z_{1},...,z_{n}) \\ 
\Phi _{\xi }(z_{1}+\tau _{\alpha 1},z_{2}+\tau _{\alpha 2},...,z_{n}+\tau
_{\alpha n})=e^{i\overset{n}{\underset{\beta =1}{\sum }}\frac{\tau _{\alpha
\beta }}{\delta _{\beta }}\xi _{\beta }}\Phi _{\xi }(z_{1},...,z_{n}),%
\end{cases}%
\alpha =1,...,n.\hspace*{60pt}
\end{equation}%
By $(\ref{multL})$ this proves the claim, and hence the lemma since $\Phi
_{\xi }(v)$ is nowhere vanishing.
\end{proof}

\noindent \hspace*{12pt} Now we are in a position to compute the curvature $%
\Theta (E,h_{E})$ (see Corollary $\ref{ThetaE}$ below). By proposition $\ref%
{P to P}$ that $P\cong \widetilde{Iso}^{\ast }\mathscr{P}$, we can pull back
the metric $h_{\mathscr{P}}$ and the connection $\nabla ^{\mathscr{P}%
}=d+i\xi $ to $P$ and write 
\begin{equation}
h_{P}:=\widetilde{Iso}^{\ast }h_{\mathscr{P}},\hspace*{10pt}\nabla ^{P}:=%
\widetilde{Iso}^{\ast }\nabla ^{\mathscr{P}}\mbox{\hspace*{10pt} on\ $P$}.
\label{hPCP}
\end{equation}%
Write $\Theta _{P}$ for the curvature of $\nabla ^{P}$. Combining $(\ref{dA}%
) $ and Proposition $\ref{Prop 3.4}$ using $(\ref{complex linear})$, one
knows that $(Id\times \varphi _{L_{0}})^{\ast }h_{P}$ and $h_{(Id\times
\varphi _{L_{0}})^{\ast }P}$ (see (\ref{hP})) differ at most by a
multiplicative constant $c$ (\cite[Prop. 8.1]{CCT}). Comparing them on $%
\{0\}\times M$, one sees that $c=1$ and hence that they are identical on $%
M\times M$. More precisely we have

\begin{proposition}
\label{Prop5.4} Recalling $h_{(Id \times \varphi_{L_{0}})^{*}P}$ and $%
\Theta_{(Id \times \varphi_{L_{0}})^{*}P}$ on $(Id \times
\varphi_{L_{0}})^{*}P \rightarrow M \times M $ (see (\ref{hP}) and (\ref%
{CuvatureP})), one has $(Id \times \varphi_{L_{0}})^{*}
\Theta_{P}=\Theta_{(Id \times \varphi_{L_{0}})^{*}P} $ and $(Id \times
\varphi_{L_{0}})^{*} h_{P}=h_{(Id \times \varphi_{L_{0}})^{*}P} $ on $M
\times M$.
\end{proposition}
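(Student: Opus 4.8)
The plan is to prove the two identities in the order suggested by the preceding discussion: first the curvature identity, by a direct pullback computation, and then the metric identity, by combining the curvature identity with a compactness argument that reduces the comparison of the two metrics to a single positive constant, which is finally shown to equal $1$.

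For the curvature identity I would start from the formula $(\ref{dA})$ for $\Theta_{\widetilde{Iso}^{\ast}\nabla^{\mathscr{P}}}$, which is the curvature $\Theta_{P}$ of $\nabla^{P}=\widetilde{Iso}^{\ast}\nabla^{\mathscr{P}}$ on $P\rightarrow M\times\widehat{M}$ under the identification $P\cong\widetilde{Iso}^{\ast}\mathscr{P}$ of Proposition $\ref{P to P}$. The map $Id\times\varphi_{L_{0}}:M\times M\rightarrow M\times\widehat{M}$ leaves the first factor fixed and, by the complex linearity $(\ref{complex linear})$, sends $\widehat{\mu}_{\alpha}$ to $\frac{\delta_{n}}{\delta_{\alpha}}\mu_{\alpha}$; hence $(Id\times\varphi_{L_{0}})^{\ast}\big(\frac{\delta_{\beta}}{\delta_{n}}\,d\widehat{\mu}_{\beta}\big)=d\mu_{\beta}$ and $(Id\times\varphi_{L_{0}})^{\ast}\big(\frac{\delta_{\alpha}}{\delta_{n}}\,d\overline{\widehat{\mu}_{\alpha}}\big)=d\overline{\mu}_{\alpha}$, while $dz_{\alpha}$ and $d\overline{z}_{\alpha}$ are unchanged. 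Pulling back $(\ref{dA})$ term by term then gives $\pi\sum_{\alpha,\beta}W_{\alpha\beta}\big(d\mu_{\beta}\wedge d\overline{z}_{\alpha}-d\overline{\mu}_{\alpha}\wedge dz_{\beta}\big)$; relabelling the summation indices and using the symmetry $W_{\alpha\beta}=W_{\beta\alpha}$ rewrites this as $\pi\sum_{\alpha,\beta}W_{\alpha\beta}\big(dz_{\alpha}\wedge d\overline{\mu}_{\beta}+d\mu_{\alpha}\wedge d\overline{z}_{\beta}\big)$, which is exactly the expression $(\ref{CuvatureP})$ for $\Theta_{(Id\times\varphi_{L_{0}})^{\ast}P}$ furnished by Proposition $\ref{Prop 3.4}$. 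This establishes $(Id\times\varphi_{L_{0}})^{\ast}\Theta_{P}=\Theta_{(Id\times\varphi_{L_{0}})^{\ast}P}$.

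For the metric identity, the key observation is that $\nabla^{P}$ is the Chern connection of $(P,h_{P})$: it is unitary for $h_{P}$ and its curvature is of type $(1,1)$ by construction, so its $(0,1)$-part is the Dolbeault operator of the holomorphic structure on $P$. Consequently both $(Id\times\varphi_{L_{0}})^{\ast}h_{P}$ and $h_{(Id\times\varphi_{L_{0}})^{\ast}P}$ are Hermitian metrics on the single holomorphic line bundle $(Id\times\varphi_{L_{0}})^{\ast}P\rightarrow M\times M$ whose Chern curvatures coincide by the first part. Their quotient $\rho:=(Id\times\varphi_{L_{0}})^{\ast}h_{P}\big/h_{(Id\times\varphi_{L_{0}})^{\ast}P}$ is therefore a globally defined positive function satisfying $\partial\overline{\partial}\log\rho=\Theta_{(Id\times\varphi_{L_{0}})^{\ast}P}-(Id\times\varphi_{L_{0}})^{\ast}\Theta_{P}=0$; since $M\times M$ is compact, the pluriharmonic function $\log\rho$ is constant, so $\rho\equiv c$ for some $c>0$ (this is \cite[Prop. 8.1]{CCT}). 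To pin down $c$ I would restrict to $\{0\}\times M$, where $(Id\times\varphi_{L_{0}})^{\ast}P$ is the pullback of the holomorphically trivial bundle $P|_{\{0\}\times\widehat{M}}$: by $(\ref{hP})$ one has $h_{(Id\times\varphi_{L_{0}})^{\ast}P}(0;\mu)\equiv 1$, while the construction of $h_{\mathscr{P}}$ together with part $ii)$ of Proposition $\ref{P to P}$ (at $x=0$ the $2\pi\Lambda^{\ast}$-action fixes $\sigma$) shows that $h_{\mathscr{P}}$, hence $(Id\times\varphi_{L_{0}})^{\ast}h_{P}$, is the trivial metric there. Thus $c=1$ and the two metrics agree on all of $M\times M$.

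The first part is essentially mechanical once $(\ref{complex linear})$ is invoked, the only care being the index bookkeeping and the use of the symmetry of $W$. I expect the real point of the argument to be the normalization $c=1$: because $\rho$ is a ratio of Hermitian metrics it is automatically positive and real, so the task is to check that both metrics, restricted to the holomorphically trivial $P|_{\{0\}\times\widehat{M}}$, take the value $1$ relative to one and the same trivializing section. This is precisely where the explicit description of $h_{\mathscr{P}}$ on $\{0\}\times M^{\ast}$ from Lemma $\ref{Pcal}$ and Proposition $\ref{P to P}$ enters, and it is the step I would treat most carefully.
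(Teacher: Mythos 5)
Your proposal is correct and follows essentially the same route as the paper: pull back the curvature formula $(\ref{dA})$ via $(\ref{complex linear})$ and the symmetry of $W$ to match $(\ref{CuvatureP})$, conclude via \cite[Prop.~8.1]{CCT} that the two metrics differ by a multiplicative constant, and fix the constant to be $1$ by comparison on $\{0\}\times M$. Your added care about the normalization step (checking both metrics equal $1$ on $\{0\}\times M$ with respect to the same trivialization) is exactly the point the paper compresses into one sentence.
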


\noindent \hspace*{12pt} To proceed further, we consider the following
bundles: \noindent

\begin{definition}
\label{Def5.6}

$i)$ Define the line bundles 
\begin{equation}
\widetilde{E}:=\pi _{1}^{\ast }L_{0}\otimes P\rightarrow M\times \widehat{M},%
\hspace*{10pt}\widetilde{E}^{\prime }:=\pi _{1}^{\ast }L_{0}\otimes
(Id\times \varphi _{L_{0}})^{\ast }P\rightarrow M_{1}\times M_{2}  \label{E2}
\end{equation}%
\noindent where $M_{1}\cong M_{2}\cong M$.\vspace*{-8pt}\newline

$ii)$ Define the direct images 
\begin{equation}
E:=(\pi _{2})_{\ast }\widetilde{E}\rightarrow \widehat{M};\hspace*{10pt}%
E^{\prime }:=(\pi _{2}^{\prime })_{\ast }\widetilde{E}^{\prime }\rightarrow
M_{2}  \label{E'}
\end{equation}%
\noindent where $\pi _{2}:M\times \widehat{M}\rightarrow \widehat{M}$, $\pi
_{2}^{\prime }:M_{1}\times M_{2}\rightarrow M_{2}$ are projections. 
\end{definition}

Note that $E$ and $E^{\prime }$ are vector bundles by Grauert's direct image
theorem (cf. \cite[Cor. 12.9]{Harts} and \cite[Section 3 of Chapter 3]%
{Banica}) since the dimension functions $h^{0}(M,L_{\widehat{\mu }})$ and $%
h^{0}(M,L_{\mu })$ are constant in $\widehat{\mu }$ and $\mu $ (cf. Lemmas %
\ref{Lemma 2.1} and \ref{Lemma 2.2}) respectively. \newline

Using the identification $(P,h_{P},\nabla _{P})\cong (\widetilde{Iso}^{\ast }%
\mathscr{P},\ \widetilde{Iso}^{\ast }h_{\mathscr{P}},\ \widetilde{Iso}^{\ast
}\nabla ^{\mathscr{P}})$ we equip $\widetilde{E}$ and $\widetilde{E^{\prime }%
}$ with the metric 
\begin{equation}
h_{\widetilde{E}}=\pi _{1}^{\ast }h_{L_{0}}\otimes h_{P}%
\mbox{\hspace{10pt} and
\hspace*{10pt}}h_{\widetilde{E}^{\prime }}=\pi _{1}^{\ast }h_{L_{0}}\otimes
h_{(Id\times \varphi _{L_{0}})^{\ast }P}
\end{equation}%
respectively (cf. Lemma \ref{Lemma 2.1} and (\ref{hP})). One has $h_{%
\widetilde{E}^{\prime }}=(Id\times \varphi _{L_{0}})^{\ast }h_{\widetilde{E}%
} $ by Proposition \ref{Prop5.4}. We can now equip the vector bundle $E$
with the metric given by the $L^{2}$ inner product using $h_{\widetilde{E}}$
on the fibers $E_{\widehat{\mu }}=H^{0}(M,L_{\widehat{\mu }})$, and
similarly $E^{\prime }$ with the metric given by that using $h_{\widetilde{E}%
^{\prime }} $ on $E^{\prime }{}_{\mu }=H^{0}(M,L_{\mu })$.

\begin{notation}
\label{N-5-6a} We denote by $h_{E}$ and $h_{E^{\prime }}$ those metrics just
obtained on $E$ and $E^{\prime },$ respectively.
\end{notation}

The statement and proof of the next proposition are the same as those of
Proposition 8.3 of \cite{CCT}:

\begin{proposition}
\label{Prop5.6} With the notation above, we have $(\widetilde{E}^{\prime
},h_{\widetilde{E}^{\prime }})=(Id\times \varphi _{L_{0}})^{\ast }(%
\widetilde{E},h_{\widetilde{E}})$. As a consequence, $(E^{\prime
},h_{E^{\prime }})=\varphi _{L_{0}}^{\ast }(E,h_{E})$ with the curvature $%
\Theta (E^{\prime },h_{E^{\prime }})=\varphi _{L_{0}}^{\ast }\Theta
(E,h_{E}) $.
\end{proposition}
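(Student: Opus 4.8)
The plan is to prove both assertions upstairs, on the line bundles $\widetilde{E}$ and $\widetilde{E}^{\prime }$, and then push down to the direct images $E$ and $E^{\prime }$ by base change. First I would record the elementary identity $\pi _{1}\circ (Id\times \varphi _{L_{0}})=\pi _{1}$, which gives $(Id\times \varphi _{L_{0}})^{\ast }\pi _{1}^{\ast }L_{0}=\pi _{1}^{\ast }L_{0}$; tensoring with $(Id\times \varphi _{L_{0}})^{\ast }P$ and recalling Definition \ref{Def5.6} yields the holomorphic bundle equality $\widetilde{E}^{\prime }=(Id\times \varphi _{L_{0}})^{\ast }\widetilde{E}$ on $M_{1}\times M_{2}$. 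For the metrics, I would pull back $h_{\widetilde{E}}=\pi _{1}^{\ast }h_{L_{0}}\otimes h_{P}$ to get $\pi _{1}^{\ast }h_{L_{0}}\otimes (Id\times \varphi _{L_{0}})^{\ast }h_{P}$ and invoke Proposition \ref{Prop5.4}, which identifies $(Id\times \varphi _{L_{0}})^{\ast }h_{P}$ with $h_{(Id\times \varphi _{L_{0}})^{\ast }P}$; this is precisely $h_{\widetilde{E}^{\prime }}=(Id\times \varphi _{L_{0}})^{\ast }h_{\widetilde{E}}$, establishing $(\widetilde{E}^{\prime },h_{\widetilde{E}^{\prime }})=(Id\times \varphi _{L_{0}})^{\ast }(\widetilde{E},h_{\widetilde{E}})$.

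Next I would descend to the direct images. The diagram with top arrow $Id\times \varphi _{L_{0}}:M_{1}\times M_{2}\to M\times \widehat{M}$, bottom arrow $\varphi _{L_{0}}:M_{2}\to \widehat{M}$ and vertical projections $\pi _{2}^{\prime }$, $\pi _{2}$ is Cartesian, since $(v,\mu )\mapsto (\mu ,(v,\varphi _{L_{0}}(\mu )))$ identifies $M_{1}\times M_{2}$ with the fibre product $M_{2}\times _{\widehat{M}}(M\times \widehat{M})$. Because $h^{0}(M,L_{\widehat{\mu }})$ is constant in $\widehat{\mu }$ (the same fact used after Definition \ref{Def5.6} to apply Grauert's theorem), the push-forward $(\pi _{2})_{\ast }\widetilde{E}=E$ is locally free and its formation commutes with base change. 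Applying this to $\widetilde{E}$ and using the bundle equality just proved gives $E^{\prime }=(\pi _{2}^{\prime })_{\ast }\widetilde{E}^{\prime }=(\pi _{2}^{\prime })_{\ast }(Id\times \varphi _{L_{0}})^{\ast }\widetilde{E}\cong \varphi _{L_{0}}^{\ast }(\pi _{2})_{\ast }\widetilde{E}=\varphi _{L_{0}}^{\ast }E$ as holomorphic vector bundles.

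It then remains to match the $L^{2}$-metrics and transport the curvature. Here the key observation is that the restriction of $Id\times \varphi _{L_{0}}$ to a fibre $M_{1}\times \{\mu \}$ is the identity $M\to M$ onto $M\times \{\varphi _{L_{0}}(\mu )\}$, so the fibrewise integral defining $h_{E^{\prime }}$ at $\mu $ is carried verbatim to the one defining $h_{E}$ at $\varphi _{L_{0}}(\mu )$; together with $h_{\widetilde{E}^{\prime }}=(Id\times \varphi _{L_{0}})^{\ast }h_{\widetilde{E}}$ this gives $h_{E^{\prime }}=\varphi _{L_{0}}^{\ast }h_{E}$ under the isomorphism above. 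Finally $\Theta (E^{\prime },h_{E^{\prime }})=\varphi _{L_{0}}^{\ast }\Theta (E,h_{E})$ follows from the naturality of the Chern connection and its curvature under pullback by the holomorphic map $\varphi _{L_{0}}$.

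The step I expect to be the real obstacle is the descent itself: upgrading the pointwise linear identification $E^{\prime }_{\mu }=H^{0}(M,L_{\mu })\cong H^{0}(M,L_{\varphi _{L_{0}}(\mu )})=(\varphi _{L_{0}}^{\ast }E)_{\mu }$ to a holomorphic, metric-preserving isomorphism of bundles, rather than merely a family of vector-space isomorphisms. The holomorphic side is exactly what the base-change theorem delivers once the constancy of $h^{0}$ is in hand, while the isometry is forced by the fibre map being the identity on $M$; with both in place the curvature identity is automatic. As every ingredient mirrors the one-dimensional computation, this should run in complete parallel to the proof of Proposition 8.3 in \cite{CCT}.
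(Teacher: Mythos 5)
Your argument is correct and follows the route the paper intends: the upstairs identification $(\widetilde{E}^{\prime },h_{\widetilde{E}^{\prime }})=(Id\times \varphi _{L_{0}})^{\ast }(\widetilde{E},h_{\widetilde{E}})$ is exactly the combination of $\pi _{1}\circ (Id\times \varphi _{L_{0}})=\pi _{1}$ with Proposition \ref{Prop5.4}, already recorded just before Notation \ref{N-5-6a}, and the descent to $(E^{\prime },h_{E^{\prime }})=\varphi _{L_{0}}^{\ast }(E,h_{E})$ via the Cartesian square and the fibrewise identity map is the same mechanism as the commutative diagram used in the proof of Corollary \ref{ThetaE} (the paper itself defers the details to \cite[Prop.~8.3]{CCT}). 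Your appeal to Grauert-type base change, justified by the constancy of $h^{0}(M,L_{\widehat{\mu }})$, is a legitimate way to make the holomorphic bundle identification precise, and nothing essential is missing.
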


\noindent \hspace*{12pt} Recall that $K\rightarrow M_{2}$ has the fiber $%
K_{\mu }=H^{0}(M,\widetilde{K}|_{M\times \{\mu \}})$ (cf. Section 4). By $(%
\ref{Kdirect})$, $(\ref{E2})$ and $(\ref{E'})$ we have $K=E^{\prime }\otimes
L_{0}$, that is $E^{\prime }=K\otimes L_{0}^{\ast }$ (where $L_{0}^{\ast }$
is the dual bundle of $L_{0}$), and it follows from Theorem \ref{theo4.3}
that 
\begin{equation}
E^{\prime }(=K\otimes L_{0}^{\ast })=\underset{m\in \mathfrak{M}}{\oplus }%
(K_{m}\otimes L_{0}^{\ast })\cong \underset{m\in \mathfrak{M}}{\oplus }%
L_{0}^{\ast }.  \label{E'KL0}
\end{equation}

\bigskip

\noindent \hspace*{12pt} In view of Theorem \ref{theo4.3}, (\ref{E'KL0}) and 
$\Theta _{L_{0}}=-\partial \overline{\partial }\log h_{L_{0}}=\pi \underset{%
\alpha ,\beta =1}{\sum }W_{\alpha \beta }\,d\mu _{\alpha }\wedge d\overline{%
\mu }_{\beta }$ (Lemma \ref{Lemma 2.1}), the curvature of $E^{\prime }$ is
now immediate:

\begin{theorem}
\label{theo5.7} Let us denote by $I_{\Delta \times \Delta }$ the $\Delta
\times \Delta$ identity matrix where $\Delta = \det (\Delta_{\delta})=%
\overset{n}{\underset{\alpha=1}{\prod}} \delta_{\alpha}=\dim H^{0}(M,L_{0}).$
Then 
\begin{equation}  \label{curvE'}
\Theta (E^{\prime },h_{E^{\prime }})=-\Theta _{L_{0}}\cdot I_{\Delta \times
\Delta }=\big(-\pi \overset{n}{\underset{\alpha ,\beta =1}{\sum }} W_{\alpha
\beta }\,d\mu _{\alpha }\wedge d\overline{\mu }_{\beta } \big) \cdot
I_{\Delta \times \Delta }\vspace*{-16pt}
\end{equation}
\begin{equation}  \label{c1E'}
c_{1}(E^{\prime },h_{E^{\prime }})=-\frac{i}{2\pi}\Delta \Theta_{L_{0}}= - 
\frac{i}{2} \Delta \overset{n}{\underset{\alpha ,\beta =1}{\sum }}W_{\alpha
\beta }\,d\mu _{\alpha }\wedge d\overline{\mu }_{\beta },\hspace*{10pt} %
\mbox{\ on\ }M.
\end{equation}
\end{theorem}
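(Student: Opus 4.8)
The plan is to read off the curvature directly from the holomorphic splitting of $K$ established in Theorem \ref{theo4.3}, combined with the relation $E^{\prime}=K\otimes L_{0}^{\ast}$ recorded in (\ref{E'KL0}). First I would recall that, by the projection-formula discussion preceding (\ref{E'KL0}), one has $K=E^{\prime}\otimes L_{0}$, equivalently $E^{\prime}=K\otimes L_{0}^{\ast}$; and that Theorem \ref{theo4.3}(2) exhibits $K=\underset{m\in\mathfrak{M}}{\oplus}K_{m}$ as a holomorphic direct sum of trivial line bundles, the number of summands being $|\mathfrak{M}|=\overset{n}{\underset{\alpha=1}{\prod}}\delta_{\alpha}=\Delta=\dim H^{0}(M,L_{0})$. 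This pins down the rank of $E^{\prime}$ as $\Delta$, which is exactly the size of the identity matrix $I_{\Delta\times\Delta}$ appearing in (\ref{curvE'}).

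Next I would compute the curvature of the $L^{2}$-metric $(\ ,\ )_{h}$ on $K$. By Lemma \ref{Lemma5.2} the sections $\{\theta_{m}(v;\mu)\}_{m\in\mathfrak{M}}$ form an orthogonal holomorphic frame whose squared norms $(\theta_{m},\theta_{m})_{h_{L_{\mu}}}$ are independent of $\mu$; hence in this frame the Gram matrix of $h_{K}$ is diagonal with constant entries and its Chern curvature vanishes identically, which is precisely the content of Theorem \ref{theo4.3}(1). Combining this with the dual-bundle identity $\Theta(L_{0}^{\ast},h_{L_{0}^{\ast}})=-\Theta(L_{0},h_{L_{0}})$, the Leibniz rule for the Chern connection on the tensor product $K\otimes L_{0}^{\ast}$ gives $\Theta(E^{\prime},h_{E^{\prime}})=\Theta(K,h_{K})\otimes\mathrm{Id}+\mathrm{Id}\otimes\Theta(L_{0}^{\ast},h_{L_{0}^{\ast}})=-\Theta_{L_{0}}\cdot I_{\Delta\times\Delta}$. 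Inserting $\Theta_{L_{0}}=\pi\overset{n}{\underset{\alpha,\beta=1}{\sum}}W_{\alpha\beta}\,d\mu_{\alpha}\wedge d\overline{\mu}_{\beta}$ from Lemma \ref{Lemma 2.1} then yields (\ref{curvE'}).

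Finally, for the first Chern form I would use $c_{1}(E^{\prime},h_{E^{\prime}})=\frac{i}{2\pi}\,\mathrm{tr}\,\Theta(E^{\prime},h_{E^{\prime}})$. Since $\Theta(E^{\prime},h_{E^{\prime}})$ is the scalar $2$-form $-\Theta_{L_{0}}$ times $I_{\Delta\times\Delta}$, its trace is $-\Delta\,\Theta_{L_{0}}$, so that $c_{1}(E^{\prime},h_{E^{\prime}})=-\frac{i}{2\pi}\Delta\,\Theta_{L_{0}}$; substituting $\Theta_{L_{0}}$ once more produces (\ref{c1E'}).

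As for the main obstacle, there is essentially none of substance, since all the analytic work has already been carried out in Theorem \ref{theo4.3} and Lemma \ref{Lemma 2.1}; the statement is, as the text anticipates, immediate. The only points demanding care are purely formal: applying the tensor-product curvature rule so that the flat factor $K$ contributes nothing while the factor $L_{0}^{\ast}$ contributes $-\Theta_{L_{0}}$ uniformly along the diagonal, and correctly tracking the rank $\Delta$ so that both the size of $I_{\Delta\times\Delta}$ in (\ref{curvE'}) and the multiplicative factor $\Delta$ in (\ref{c1E'}) emerge from the trace.
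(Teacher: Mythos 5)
Your argument is correct and follows exactly the route the paper takes: the theorem is deduced from the flatness and splitting of $K$ in Theorem \ref{theo4.3}, the identification $E^{\prime}=K\otimes L_{0}^{\ast}$ from (\ref{E'KL0}), and the curvature $\Theta_{L_{0}}=\pi\sum_{\alpha,\beta}W_{\alpha\beta}\,d\mu_{\alpha}\wedge d\overline{\mu}_{\beta}$ coming from the metric of Lemma \ref{Lemma 2.1}, with the tensor-product curvature rule and the trace supplying (\ref{curvE'}) and (\ref{c1E'}). The paper simply declares the result ``immediate'' from these ingredients, and your write-up fills in the same formal steps.
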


\begin{remark}
\label{Rmk5.9} By (\ref{complaxreal}), we have 
\begin{equation}
d\mu _{\alpha }=\delta _{\alpha }dx_{\alpha }+\overset{n}{\underset{k=1}{%
\sum }\,}\tau _{\alpha k}\text{ }dx_{n+k}\text{, \ \ \ }d\overline{\mu }%
_{\alpha }=\delta _{\alpha }dx_{\alpha }+\overset{n}{\underset{k=1}{\sum }\,}%
\overline{\tau }_{\alpha k}\text{ }dx_{n+k}\text{, \ \ }\alpha =1,...,n%
\hspace*{40pt}
\end{equation}%
so that $c_{1}(E^{\prime },h_{E^{\prime }})=-\frac{i}{2}\Delta \overset{n}{%
\underset{\alpha ,\beta =1}{\sum }}W_{\alpha \beta }\,d\mu _{\alpha }\wedge d%
\overline{\mu }_{\beta }=-\Delta \overset{n}{\underset{i=1}{\sum }}\,\delta
_{i}dx_{i}\,\wedge dx_{n+i}=-\Delta \cdot c_{1}(L_{0})$.
\end{remark}

\begin{corollary}
\label{ThetaE} On $\widehat{M}$, we have the curvature $\Theta (E,h_{E})=%
\big( -\pi \overset{n}{\underset{\alpha ,\beta =1}{\sum }}W_{\alpha \beta
}\, \frac{\delta_{\alpha}\delta_{\beta}}{\delta_{n} \delta_{n}}\, d\widehat{%
\mu }_{\alpha }\wedge d\overline{\widehat{\mu }}_{\beta }\, \big) \cdot
I_{\Delta\times \Delta }$ and the first Chern class $c_{1}(E,h_{E})=\big( -%
\frac{i}{2} \Delta \overset{n}{\underset{\alpha ,\beta =1}{\sum }}W_{\alpha
\beta }\, \frac{\delta_{\alpha}\delta_{\beta}}{\delta_{n} \delta_{n}} \,d%
\widehat{\mu}_{\alpha }\wedge d\overline{\widehat{\mu} }_{\beta }\, \big) $.
\end{corollary}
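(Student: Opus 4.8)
The plan is to descend the curvature formula for $E'$ on $M$ to a formula for $E$ on $\widehat{M}$ through the isogeny $\varphi_{L_0}$, exploiting that $(E',h_{E'})=\varphi_{L_0}^{\ast}(E,h_E)$ as metrized bundles. First I would invoke Proposition \ref{Prop5.6}, which gives the identity $\Theta(E',h_{E'})=\varphi_{L_0}^{\ast}\Theta(E,h_E)$, together with the explicit evaluation of the left-hand side from Theorem \ref{theo5.7}, namely $\Theta(E',h_{E'})=\big(-\pi\sum_{\alpha,\beta}W_{\alpha\beta}\,d\mu_\alpha\wedge d\overline{\mu}_\beta\big)\cdot I_{\Delta\times\Delta}$ on $M$. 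The whole task then reduces to inverting the pullback $\varphi_{L_0}^{\ast}$ on translation-invariant $(1,1)$-forms.

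Next I would make the pullback explicit in coordinates. By Property \ref{Prop 2.3} the lift $\widetilde{\varphi}_{L_0}:V\to\overline{V}^{\,\ast}$ is the complex-linear map $v_\alpha\mapsto\frac{\delta_n}{\delta_\alpha}v_\alpha^{\ast}$, so on the dual invariant coframes (with $d\mu_\alpha$ dual to $v_\alpha$ and $d\widehat{\mu}_\alpha$ dual to $v_\alpha^{\ast}$) one gets $\varphi_{L_0}^{\ast}(d\widehat{\mu}_\alpha)=\frac{\delta_n}{\delta_\alpha}\,d\mu_\alpha$, whence $\varphi_{L_0}^{\ast}(d\widehat{\mu}_\alpha\wedge d\overline{\widehat{\mu}}_\beta)=\frac{\delta_n^{2}}{\delta_\alpha\delta_\beta}\,d\mu_\alpha\wedge d\overline{\mu}_\beta$. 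Writing the unknown curvature as $\Theta(E,h_E)=\big(-\pi\sum_{\alpha,\beta}C_{\alpha\beta}\,d\widehat{\mu}_\alpha\wedge d\overline{\widehat{\mu}}_\beta\big)\cdot I_{\Delta\times\Delta}$ (the scalar-times-identity shape being forced by the corresponding shape of $\Theta(E',h_{E'})$), I would pull back, equate with the formula above, and match coefficients of the linearly independent invariant forms $d\mu_\alpha\wedge d\overline{\mu}_\beta$. This yields $C_{\alpha\beta}\frac{\delta_n^{2}}{\delta_\alpha\delta_\beta}=W_{\alpha\beta}$, i.e. $C_{\alpha\beta}=W_{\alpha\beta}\frac{\delta_\alpha\delta_\beta}{\delta_n^{2}}$, which is precisely the asserted curvature. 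The first Chern class then drops out from $c_1(E,h_E)=\frac{i}{2\pi}\,\mathrm{Tr}\,\Theta(E,h_E)$: since $\Theta(E,h_E)$ is a scalar $(1,1)$-form times $I_{\Delta\times\Delta}$, the trace contributes the factor $\Delta$, giving $c_1(E,h_E)=-\frac{i}{2}\Delta\sum_{\alpha,\beta}W_{\alpha\beta}\frac{\delta_\alpha\delta_\beta}{\delta_n^{2}}\,d\widehat{\mu}_\alpha\wedge d\overline{\widehat{\mu}}_\beta$.

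The only step needing genuine (if modest) justification, and thus the main obstacle, is the legitimacy of reading off $\Theta(E,h_E)$ from its pullback: one must know that $\varphi_{L_0}^{\ast}$ is injective on invariant $(1,1)$-forms and that the recovered form is indeed globally defined on all of $\widehat{M}$. This is guaranteed because $\widetilde{\varphi}_{L_0}$ is a complex-linear isomorphism (the scalars $\delta_n/\delta_\alpha$ being nonzero), so the induced map on invariant coframes is an isomorphism; and because $\varphi_{L_0}$ is a surjective isogeny, every point of $\widehat{M}$ lies in its image, so the translation-invariant form on $\widehat{M}$ is determined pointwise by its constant pullback. With this observation in place the coefficient comparison is forced, and the remaining manipulations are the routine linear-algebra bookkeeping sketched above.
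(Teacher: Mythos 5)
Your proposal is correct and follows essentially the same route as the paper's own proof: invoke Proposition \ref{Prop5.6} for $\Theta(E',h_{E'})=\varphi_{L_{0}}^{\ast}\Theta(E,h_{E})$, use the explicit formulas (\ref{curvE'})--(\ref{c1E'}) from Theorem \ref{theo5.7}, and invert the pullback via $\widehat{\mu}_{\alpha}=\frac{\delta_{n}}{\delta_{\alpha}}\mu_{\alpha}$ together with the fact that $\varphi_{L_{0}}$ is a surjective local diffeomorphism. Your coefficient bookkeeping and the justification for why the pullback can be inverted on invariant $(1,1)$-forms are both sound.
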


\vspace*{4pt}

\begin{proof}
With the notation from Definition $\ref{Def5.6}$, Proposition \ref{Prop5.6}
gives us the following commutative diagram 
\begin{equation*}
\begin{tikzcd}[column sep=2cm] \widetilde{E} \arrow{r}{ (Id \times
\varphi_{L_{0}})^{*}} \arrow[swap]{d}{\pi_{2}} & \widetilde{E}^{\, \prime}
\arrow{d}{\pi_{2}^{\prime}} \\E \arrow{r}{\varphi_{L_{0}}^{*}}& E^{\prime}
\end{tikzcd}\hspace*{160pt}
\end{equation*}
Since $\varphi_{L_{0}} : M \rightarrow \widehat{M}$ is a local
diffeomorphism and we have the formula for $\Theta(E^{\prime }, h_{E^{\prime
}})$ and $c_{1}(E^{\prime }, h_{E^{\prime }})$ of $E^{\prime }\rightarrow M$
(see (\ref{curvE'}), (\ref{c1E'})), the corollary follows from the equations 
$\widehat{\mu}_{\alpha}=\frac{\delta_{n}}{\delta_{\alpha}} \mu_{\alpha}$
(see Property $\ref{Prop 2.3})$.
\end{proof}

\vspace*{8pt} \noindent \hspace*{12pt} On $M^{\ast }=V^{\ast }/2\pi \Lambda
^{\ast }$ (cf. Lemma \ref{Iso}), if we write $V^{\ast }:=Hom_{\mathbb{R}}(V,%
\mathbb{R})=\{\xi _{1}dx_{1}+...+\xi _{2n}dx_{2n}\mid \xi _{i}\in \mathbb{R}%
\}=\{\eta _{1}\left( 2\pi dx_{1}\right) +...+\eta _{2n}\left( 2\pi
dx_{2n}\right) \mid \eta _{i}\in \mathbb{R}\}$ and $2\pi \Lambda ^{\ast }=%
\mathbb{Z}\{2\pi dx_{1},...,2\pi dx_{2n}\}$, then $\eta _{1},...,\eta _{2n}$
are the dual real coordinates on $V^{\ast }$ with $d\eta _{1},...,d\eta _{n}$
the corresponding 1-forms on $M^{\ast }$. It is known that $H^{k}(M^{\ast }, 
\mathbb{Z})=\mathbb{Z}\{d\eta_{I}\}_{\#I=k}$ (\cite[p.302]{GH}).

\begin{theorem}
\label{5.10} By the map $Iso : \widehat{M} \rightarrow M^{*}$ (Lemma \ref%
{Iso}) we have, with $\Delta=\overset{n}{\underset{\alpha=1}{\prod}}%
\delta_{\alpha}$ 
\begin{equation}  \label{5.19}
c_{1}((Iso^{-1})^{*}E, (Iso^{-1})^{*}(h_{E}))=- \overset{n}{\underset{i=1}{%
\sum}} \frac{\Delta}{\delta_{i}}\, d \eta_{i} \wedge d\eta_{n+i} \in
H^{2}(M^{*}, \mathbb{Z}).\hspace{60pt}
\end{equation}
\end{theorem}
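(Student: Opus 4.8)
The plan is to push the curvature already obtained in Corollary~\ref{ThetaE} forward along the isomorphism $Iso:\widehat{M}\to M^{\ast}$ of Lemma~\ref{Iso}, and then to read off the class in the integral basis $\{d\eta_{I}\}$ of $H^{\ast}(M^{\ast},\mathbb{Z})$. Since $c_{1}((Iso^{-1})^{\ast}E,(Iso^{-1})^{\ast}h_{E})=(Iso^{-1})^{\ast}c_{1}(E,h_{E})$, the whole task reduces to rewriting the $(1,1)$-form
\[
c_{1}(E,h_{E})=-\frac{i}{2}\Delta\sum_{\alpha,\beta=1}^{n}W_{\alpha\beta}\frac{\delta_{\alpha}\delta_{\beta}}{\delta_{n}^{2}}\,d\widehat{\mu}_{\alpha}\wedge d\overline{\widehat{\mu}}_{\beta}
\]
in terms of the real coordinates $\eta_{1},\dots,\eta_{2n}$ on $M^{\ast}$.

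First I would express the $d\widehat{\mu}_{\alpha}$ in the $d\eta$-basis. The coordinate change is read off from $(\ref{xitomu})$, which expresses $\widehat{\mu}$ in terms of $\xi$ (and hence realizes $Iso^{-1}$ in coordinates), together with the relation $\xi_{i}=2\pi\eta_{i}$ defining the coordinates $\eta_{i}$ on $M^{\ast}$; writing $d\eta'=(d\eta_{1},\dots,d\eta_{n})^{T}$ and $d\eta''=(d\eta_{n+1},\dots,d\eta_{2n})^{T}$, this reads in matrix form
\[
d\widehat{\mu}=\delta_{n}\Delta_{\delta}^{-1}\big(Z\Delta_{\delta}^{-1}d\eta'-d\eta''\big).
\]
Substituting this and its conjugate into the sum above, with coefficient matrix $\frac{1}{\delta_{n}^{2}}\Delta_{\delta}W\Delta_{\delta}$, the powers of $\delta_{n}$ and the factors $\Delta_{\delta}$ cancel (using $Z^{T}=Z$ and $W^{T}=W$), leaving the clean expression
\[
\sum_{\alpha,\beta}W_{\alpha\beta}\frac{\delta_{\alpha}\delta_{\beta}}{\delta_{n}^{2}}\,d\widehat{\mu}_{\alpha}\wedge d\overline{\widehat{\mu}}_{\beta}=\big((d\eta')^{T}\Delta_{\delta}^{-1}Z-(d\eta'')^{T}\big)\,W\,\big(\overline{Z}\Delta_{\delta}^{-1}d\eta'-d\eta''\big).
\]

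The heart of the argument is the simplification of the four resulting terms, where I would use repeatedly that a wedge $\sum S_{ij}\,d\eta_{i}\wedge d\eta_{j}$ with symmetric coefficient matrix $S$ vanishes, so only antisymmetric parts contribute. Writing $Z=X+iY$ and $W=Y^{-1}$, the pure $d\eta'\wedge d\eta'$ term has coefficient $\Delta_{\delta}^{-1}ZW\overline{Z}\Delta_{\delta}^{-1}$; the identity $ZW\overline{Z}=XY^{-1}X+Y=\overline{Z}WZ$ shows this is real symmetric, so it drops out, while the pure $d\eta''\wedge d\eta''$ term drops out because its coefficient is the symmetric $W$. After transposing one factor and relabelling, the two cross terms combine into $\Delta_{\delta}^{-1}(\overline{Z}-Z)W$ paired with $\sum_{\sigma,\beta}d\eta_{\sigma}\wedge d\eta_{n+\beta}$, and $(\overline{Z}-Z)W=(-2iY)Y^{-1}=-2iI$ collapses the whole thing to $-2i\sum_{i}\frac{1}{\delta_{i}}d\eta_{i}\wedge d\eta_{n+i}$. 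Multiplying by the prefactor $-\frac{i}{2}\Delta$ yields $-\sum_{i}\frac{\Delta}{\delta_{i}}d\eta_{i}\wedge d\eta_{n+i}$, as claimed; integrality then follows since each $\frac{\Delta}{\delta_{i}}=\prod_{\alpha\neq i}\delta_{\alpha}$ is an integer and $\{d\eta_{i}\wedge d\eta_{n+i}\}$ lies in the stated integral basis of $H^{2}(M^{\ast},\mathbb{Z})$. I expect the only real obstacle to be the bookkeeping of the four-term expansion and a clean verification of the two matrix identities $ZW\overline{Z}=\overline{Z}WZ$ and $(\overline{Z}-Z)W=-2iI$; once these are in hand, the collapse to the final form is forced.
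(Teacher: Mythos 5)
Your computation is correct, and it reaches (\ref{5.19}) by a route that differs from the paper's in a worthwhile way. The paper does not convert $c_{1}(E,h_{E})$ into $\eta$-coordinates directly on $M^{\ast}$; instead it pulls everything back to $M$ along the isogeny $\varphi_{L_{0}}$, uses Property \ref{Prop 2.3} and $(\ref{mutoxi})$ to express $d\eta_{i}$ in terms of the real and imaginary parts $d\mu_{\alpha 1},d\mu_{\alpha 2}$ (see $(\ref{mutoeta})$), and then verifies that $\varphi_{L_{0}}^{\ast}Iso^{\ast}(-\omega^{\vee})$ equals the already-known $c_{1}(E',h_{E'})$ of $(\ref{c1E'})$; the only cancellation needed there is that $\left(\operatorname{Re}Z\right)W$ contributes a symmetric wedge $\sum(\operatorname{Re}Z)_{\alpha\beta}f_{\alpha}\wedge f_{\beta}=0$. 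The conclusion then descends to $\widehat{M}$ because $\varphi_{L_{0}}^{\ast}c_{1}(E,h_{E})=c_{1}(E',h_{E'})$ (Proposition \ref{Prop5.6}) and $\varphi_{L_{0}}$ is a local diffeomorphism. You instead start from Corollary \ref{ThetaE}, invert the coordinate change via $(\ref{xitomu})$ to get $d\widehat{\mu}=\delta_{n}\Delta_{\delta}^{-1}\bigl(Z\Delta_{\delta}^{-1}d\eta'-d\eta''\bigr)$, and carry out the four-term expansion in complex matrix form; the identities $ZW\overline{Z}=XY^{-1}X+Y$ (real symmetric, hence killed by the wedge) and $(\overline{Z}-Z)W=-2iI$ do hold with $Z$ symmetric and $W=(\operatorname{Im}Z)^{-1}$, and the prefactor $-\tfrac{i}{2}\Delta$ times $-2i\sum_{i}\delta_{i}^{-1}d\eta_{i}\wedge d\eta_{n+i}$ indeed gives $-\sum_{i}\tfrac{\Delta}{\delta_{i}}d\eta_{i}\wedge d\eta_{n+i}$. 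What your version buys is a self-contained computation entirely on $M^{\ast}$, with no detour through $E'$ and the isogeny; what the paper's version buys is that the verification reduces to the simpler real-coordinate identity $-\Delta\sum W_{\alpha\beta}\,d\mu_{\alpha1}\wedge d\mu_{\beta2}=c_{1}(E',h_{E'})$ and also explains (in the subsequent Remark) how the candidate coefficients $\Delta/\delta_{i}$ were guessed in the first place by comparing top self-intersection numbers. Both arguments hinge on the same data, namely the explicit form of $Iso$ in Lemma \ref{Iso} and the symmetry of $Z$ and $W$, so your proof is a legitimate and slightly more direct alternative.
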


\begin{remark}
\label{R-5-12} $\omega ^{\vee }\coloneqq { \overset{n}{\underset{i=1}{\sum}}}%
\,{\ \frac{\Delta }{\delta _{i}}}\ d\eta _{i}\wedge d\eta _{n+i}$ can be
regarded as a \textquotedblleft dual polarization" on the dual Abelian
variety $\widehat{M}$ (via the map $Iso$) while $\omega =\overset{n}{%
\underset{i=1}{\sum }}\delta _{i}dx_{i}\wedge dx_{n+i}$ in (\ref{2.1}) is
the polarization on $M$. For the elliptic curve $\omega ^{\vee }=d\eta
_{1}\wedge d\eta _{2}$ represents 1 in $H^{2}(\widehat{M},\mathbb{Z})=%
\mathbb{Z}$.
\end{remark}

\begin{proof}[Proof of Theorem \protect\ref{5.10}]
\ From $(\ref{mutoxi})$, $\widehat{\mu }_{\alpha }=\frac{\delta _{n}}{\delta
_{\alpha }}\mu _{\alpha },\alpha =1,...,n$ (Property \ref{Prop 2.3}), and $%
2\pi \eta _{i}=\xi _{i},i=1,...,2n$, one sees that\vspace*{4pt} 
\begin{equation}
\begin{pmatrix}
d\eta _{1} \\ 
\vdots \\ 
d\eta _{n}%
\end{pmatrix}%
=\Delta _{\delta }\,W%
\begin{pmatrix}
d\mu _{12} \\ 
\vdots \\ 
d\mu _{n2}%
\end{pmatrix}%
,\hspace*{10pt}%
\begin{pmatrix}
d\eta _{n+1} \\ 
\vdots \\ 
d\eta _{2n}%
\end{pmatrix}%
=-%
\begin{pmatrix}
d\mu _{11} \\ 
\vdots \\ 
d\mu _{n1}%
\end{pmatrix}%
+(\func{Re}Z)\,W%
\begin{pmatrix}
d\mu _{12} \\ 
\vdots \\ 
d\mu _{n2}%
\end{pmatrix}%
\hspace*{30pt}\vspace*{8pt}  \label{mutoeta}
\end{equation}

\noindent where $(\Delta _{\delta },Z)$ is the period matrix of $\Lambda $
(see (\ref{period})). We want to calculate the 2-form $-\,\omega ^{\vee }=-%
\overset{n}{\underset{\alpha =1}{\sum }}\frac{\Delta }{\delta _{i}}d\eta
_{i}\wedge d\eta _{n+i}$ and relate it to $c_{1}(E^{\prime },h_{E^{\prime
}})=-\frac{i}{2}\Delta \overset{n}{\underset{\alpha ,\beta =1}{\sum }}%
W_{\alpha \beta }\,d\mu _{\alpha }\wedge d\overline{\mu }_{\beta }$. First,
writing $f_{\alpha }=\overset{n}{\underset{\beta =1}{\sum }}\,W_{\alpha
\beta }\,d\mu _{\beta 2}$ ($\alpha =1,...,n$) we see from (\ref{mutoeta})
that 
\begin{align}
& \left( (\Delta _{\delta }^{-1}\Delta )\,%
\begin{pmatrix}
d\eta _{1} \\ 
\vdots \\ 
d\eta _{n}%
\end{pmatrix}%
\right) ^{T}\wedge (\func{Re}Z)W%
\begin{pmatrix}
d\mu _{12} \\ 
\vdots \\ 
d\mu _{n2}%
\end{pmatrix}%
\hspace*{210pt}  \label{DeltaT} \\
& =\left( (\Delta _{\delta }^{-1}\Delta )\,\Delta _{\delta }W%
\begin{pmatrix}
d\mu _{12} \\ 
\vdots \\ 
d\mu _{n2}%
\end{pmatrix}%
\right) ^{T}\wedge (\func{Re}Z)W%
\begin{pmatrix}
d\mu _{12} \\ 
\vdots \\ 
d\mu _{n2}%
\end{pmatrix}%
\hspace*{70pt}  \notag \\
& =\Delta 
\begin{pmatrix}
f_{1} \\ 
\vdots \\ 
f_{n}%
\end{pmatrix}%
^{T}\wedge (\func{Re}Z)%
\begin{pmatrix}
f_{1} \\ 
\vdots \\ 
f_{n}%
\end{pmatrix}%
=\Delta \overset{n}{\underset{\alpha ,\beta =1}{\sum }}(\func{Re}Z)_{\alpha
\beta }\,f_{\alpha }\wedge f_{\beta }=0\hspace*{34pt}  \notag
\end{align}

\vspace*{8pt} \noindent since $\func{Re}Z$ is symmetric and ${f_{\alpha }}%
^{,}s$ are 1-forms. Therefore, by (\ref{mutoeta}) again and using (\ref%
{DeltaT}) 
\begin{align}
\hspace*{40pt}\varphi _{L_{0}}^{\ast }Iso^{\ast }(-\omega ^{\vee })&
=\varphi _{L_{0}}^{\ast }Iso^{\ast }(-\overset{n}{\underset{\alpha =1}{\sum }%
}\frac{\Delta }{\delta _{i}}\,d\eta _{i}\wedge d\eta _{n+i})=\left( (\Delta
_{\delta }^{-1}\Delta )\,\Delta _{\delta }W%
\begin{pmatrix}
d\mu _{12} \\ 
\vdots \\ 
d\mu _{n2}%
\end{pmatrix}%
\right) ^{T}\wedge 
\begin{pmatrix}
-d\mu _{11} \\ 
\vdots \\ 
-d\mu _{n1}%
\end{pmatrix}
\notag \\
& =-\Delta \overset{n}{\underset{\alpha ,\beta =1}{\sum }}W_{\alpha \beta
}\,d\mu _{\alpha 1}\wedge d\mu _{\beta 2}=-\frac{i}{2}\Delta \overset{n}{%
\underset{\alpha ,\beta =1}{\sum }}W_{\alpha \beta }\,d\mu _{\alpha }\wedge d%
\overline{\mu }_{\beta }\overset{(\ref{c1E'})}{=}c_{1}(E^{\prime
},h_{E^{\prime }}).
\end{align}%
Now (\ref{5.19}) follows since $\varphi _{L_{0}}^{\ast
}(c_{1}(E,h_{E}))=c_{1}(E^{\prime },h_{E^{\prime }})$ by Proposition \ref%
{Prop5.6} and $\varphi _{L_{0}}$ is a local diffeomorphism.
\end{proof}

\begin{remark}
The motivation for the \textit{a priori} choice $\omega ^{\vee }={\ \overset{%
n}{\underset{i=1}{\sum }}}\,{\ \frac{\Delta }{\delta _{i}}}\ d\eta
_{i}\wedge d\eta _{n+i}$ in Theorem \ref{5.10} is that suppose $%
c_{1}(E,h_{E})=\overset{n}{\underset{i=1}{\sum }}\,(-a_{i})\,d\eta
_{i}\wedge d\eta _{n+i}\in H^{2}(\widehat{M},\mathbb{Z})$ for some $a_{i}\in 
\mathbb{Z}$ (as yet unknown). Then ($\widehat{M}$ and $M^{\ast }$ are
identified via $Iso$) 
\begin{equation}
\int_{M}(\varphi _{L_{0}}^{\ast }c_{1}(E,h_{E}))^{n}=\deg (\varphi
_{L_{0}})\int_{\widehat{M}}c_{1}^{n}(E,h_{E})=(-1)^{\frac{n(n+1)}{2}%
}(n!)\Delta ^{2}\,\overset{n}{\underset{i=1}{\prod }}a_{i}  \label{up}
\end{equation}%
where $\deg (\varphi _{L_{0}})=\Delta ^{2}$ by \cite[pp. 315-317]{GH}.
Furthermore, by Remark \ref{Rmk5.9} 
\begin{equation}
\int_{M}c_{1}^{n}(E^{\prime },h_{E^{\prime }})=\int_{M}(-\Delta \overset{n}{%
\underset{i=1}{\sum }}\,\delta _{i}dx_{i}\,\wedge dx_{n+i})^{n}=(-1)^{\frac{%
n(n+1)}{2}}(n!)\Delta ^{n+1}.  \label{down}
\end{equation}

\noindent We see from (\ref{up}) and (\ref{down}) that for $\varphi
_{L_{0}}^{\ast }c_{1}(E,h_{E})=c_{1}(E^{\prime },h_{E^{\prime }})$ to hold,
one may try $a_{i}=\frac{\Delta }{\delta _{i}}$ because $\Delta =\overset{n}{%
\underset{\alpha =1}{\prod }}\delta _{\alpha }$.\vspace*{16pt}
\end{remark}

\end{document}